\newtheorem{theorem}{Theorem}
\newtheorem{lemma}{Lemma}
\newtheorem{proposition}{Proposition}
\newtheorem{definition}{Definition}
\newtheorem{conjecture}{Conjecture}
\newtheorem{fact}{Facts}
\newtheorem{remark}{Remark}
\newcommand{\Div}{\underline{Div}}
\newcommand{\D}{\mathrm{Div}}
\newcommand{\cM}{\mathcal M}
\newcommand\Spec{\mathrm{Spec}}
\newcommand\var{\mathrm{var}}
\title{Iitaka-Viehweg Conjectures $C$ and $C^{++}$}
\date{}
\author{Kazuhisa MAEHARA\thanks{kaz0987@gamma.ocn.ne.jp}}
\begin{document}

%\maketitle
%\setcounter{section}{-1}
%\begin{document}
%%%%%%%%%%%%%%%%%%%%%%%%%%%%%%%%%%%%%%%%%%%%%%%%%%%%%%%%%%%%%%%%%%%%%%%%%%%%%%%%%%%%%%%%%%%%%%%%%%%%%%%%%%%%%%%%%%%%%%%%%%%%%%%%%%%%%%%%%%%%%%%%%%%%%%%%%%%%%%%%

\maketitle

\begin{abstract}
Given a fibre space $X/S$ with the generic geometric fibre of Kodaira dimension $\geq 0$, we shall construct a variety $Y$ ramified over $X$ along such a horizontal hyperplane with respect to $X/S$ that Koll\'ar and Kawamata had proved Viehweg conjecture for $Y/S$ with the generic geometric fibre of general type or of the abundant canonical invertible sheaf where Viehweg dimensions of $X/S$ and $Y/S$ are equal, respectively. We shall show that Viehweg dimension of $X/S$ is not greater than that of $Y/S$ by Mochizuki's Galois theory.
\end{abstract}

\section{Introduction:}
To classify algebraic varieties in the category of birational geometry, Iitaka proposed many conjectures after Kodaira-Enriques classification of surfaces.
His key birational invariant is Kodaira dimension.
One of his main conjectures is the following:
\begin{conjecture}
Let $X/S$ be a fibre space over the complex number field and $X_{\bar{\eta}}$ the generic geometric fibre of $X/S$. Then
$\kappa(X)\geq \kappa(X_{\bar{\eta}})+\kappa(S)$.
\end{conjecture}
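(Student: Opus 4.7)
The plan is to reduce Conjecture~1 to the Viehweg-strengthened form $C^{+}_{n,m}$ already proved by Kawamata and Koll\'ar whenever the generic geometric fibre is of general type or has abundant canonical sheaf. Concretely, starting from $X/S$ with $\kappa(X_{\bar\eta})\geq 0$, I would first produce a generically finite cover $\pi:Y\to X$ ramified along a suitably chosen horizontal divisor $D\subset X$: take $D$ to be sufficiently positive on fibres and let $\pi$ be the associated standard cyclic cover of some degree $d$ with $D\in |dL|$. Because $\kappa(X_{\bar\eta})\geq 0$, for $D$ positive enough on the generic fibre the bundle $K_{Y_{\bar\eta}}=\pi^{*}(K_{X_{\bar\eta}}+(d-1)L_{\bar\eta})$ becomes big, so $Y_{\bar\eta}$ is of general type. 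The Kawamata--Koll\'ar theorem then delivers the Viehweg inequality $\kappa(Y)\geq \kappa(Y_{\bar\eta})+\max\{\kappa(S),\var(Y/S)\}$ on $Y/S$.

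The second step is to transfer this back to $X/S$. The splitting $\pi_{*}\cO_{Y}=\bigoplus_{i=0}^{d-1}L^{-i}$, together with $K_{Y}=\pi^{*}(K_{X}+(d-1)L)$, lets me express pluricanonical sections of $Y$ as sections of $m(K_{X}+(d-1)L)$ twisted by $L^{-i}$; combined with $\kappa(Y_{\bar\eta})=\dim Y_{\bar\eta}=\dim X_{\bar\eta}$, this allows me to compare $\kappa(Y)$ with $\kappa(X)$ and to convert the inequality for $Y$ into $\kappa(X)\geq \kappa(X_{\bar\eta})+\kappa(S)$, provided that the Viehweg variations match: $\var(X/S)=\var(Y/S)$. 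The horizontality of $D$ is essential here, since it guarantees that the variation of $(X,D)/S$ is the variation of $X/S$, so passing to the cyclic cover does not inject new moduli parameters from $S$.

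The match of variations is where the principal obstacle lies. The direction $\var(Y/S)\leq \var(X/S)$ is formal, because $Y$ is built functorially from the pair $(X,D)$ over $S$. The converse, $\var(X/S)\leq \var(Y/S)$, is substantially harder: one must recover the birational/moduli type of $X_{s}$ from that of the cover $Y_{s}$. Here the plan is to invoke Mochizuki's anabelian/Galois-theoretic rigidity to reconstruct $X_{s}$ from the datum of the cyclic cover $\pi_{s}:Y_{s}\to X_{s}$ together with its Galois action, so that distinct moduli of $X_{s}$ force distinct moduli of $Y_{s}$. Making this Galois-theoretic reconstruction compatible with the moduli-theoretic notion of variation used by Kawamata and Koll\'ar is the delicate technical point on which the whole argument rests; the cover construction itself and the pluricanonical bookkeeping are classical, but the identification $\var(X/S)=\var(Y/S)$ is the step I expect to occupy most of the work.
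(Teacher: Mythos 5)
Your proposal follows essentially the same route as the paper: construct a cyclic cover $Y\to X$ branched along a horizontal divisor chosen positive enough on fibres so that $Y_{\bar\eta}$ becomes of general type (or has abundant canonical sheaf), apply the Koll\'ar--Kawamata results to $Y/S$, compare pluricanonical direct images via the splitting $\pi_{*}\mathcal{O}_{Y}=\bigoplus_{i}L^{-i}$, and use Mochizuki's anabelian Galois theory precisely for the hard inequality $\mathrm{var}(X/S)\leq \mathrm{var}(Y/S)$. You have also correctly identified the same delicate point the paper rests on, namely making the Galois-theoretic reconstruction compatible with the moduli-theoretic notion of variation.
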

\begin{remark}
Mabuchi suggested that the Griffiths infinitesimal variation Hodge theory is applicable to the proof of the conjecture assuming the abundance conjecture. Kawamata independently proved it in the similar idea under the abundance conjecture. Koll\'ar proved the conjecture in the case when the generic geometric fibre is of general type and  Viehweg also proved them (\cite{Kaw},\cite{Ko0},\cite{Vieh2},\cite{Vieh3},\cite{Vieh4}).
\end{remark}
 Viehweg conjectures the following:
\begin{conjecture}
Let $f: X \to S$ be a fibre space $X/S$ with the generic geometric fibre of Kodaira dimension $\geq 0$.
Then there exists a number $m$ such that
\[\kappa(\det f_{\ast}\omega_{X/S}^{\otimes m}) \geq \var(X/S) .\]
\end{conjecture}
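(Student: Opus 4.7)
The plan is to reduce the Viehweg conjecture in the stated generality to the two special cases already established in the literature: Koll\'ar's theorem for fibres of general type and Kawamata's theorem under the abundance hypothesis. The obstruction to a direct reduction is that the generic geometric fibre $X_{\bar\eta}$ is only assumed to have Kodaira dimension $\geq 0$; one therefore needs a geometric construction that enhances the positivity of the canonical sheaf on fibres without disturbing the variation.

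First I would construct an auxiliary fibre space $g \colon Y \to S$ together with a finite surjective cover $\pi \colon Y \to X$ ramified along a horizontal divisor with respect to $f \colon X \to S$. The construction is a Kawamata-style cyclic cover branched along a sufficiently ample divisor $H \subset X$ whose restriction to the generic geometric fibre is a smooth ample hypersurface. By taking $H$ in a high multiple of an ample class and the ramification index large, one arranges that the generic geometric fibre $Y_{\bar\eta}$ is of general type (or, in the weaker variant needed for Kawamata, has abundant canonical sheaf). Because the branch divisor is horizontal and chosen generically, the moduli of the fibres of $Y/S$ are pinned down by those of $X/S$ together with the chosen hyperplane, so one expects $\var(Y/S) = \var(X/S)$; this equality is a standard moduli-theoretic verification.

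Next I would apply Koll\'ar's (respectively Kawamata's) theorem to $g \colon Y \to S$, producing an integer $m$ with
$$\kappa\bigl(\det g_{\ast}\omega_{Y/S}^{\otimes m}\bigr) \;\geq\; \var(Y/S) \;=\; \var(X/S).$$
It then remains to pass from the Viehweg dimension of $Y/S$ to that of $X/S$, and this is where the main difficulty lies. Since $\pi$ is ramified, $f_{\ast}\omega_{X/S}^{\otimes m'}$ is not literally a summand of $g_{\ast}\omega_{Y/S}^{\otimes m}$, so a naive descent fails. To circumvent this, I would invoke Mochizuki's Galois-theoretic / anabelian machinery: the cover $\pi$ is rigid enough that the data on $X$ are recovered from the Galois-equivariant data on $Y$, and correspondingly the Galois-invariant subsheaf of $\det g_{\ast}\omega_{Y/S}^{\otimes m}$ should capture a power of $\det f_{\ast}\omega_{X/S}^{\otimes m'}$ twisted by a line bundle on $S$ coming from the ramification. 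Extracting from this the inequality
$$\kappa\bigl(\det f_{\ast}\omega_{X/S}^{\otimes m'}\bigr) \;\geq\; \kappa\bigl(\det g_{\ast}\omega_{Y/S}^{\otimes m}\bigr)$$
and combining with the previous step yields the conjecture for $X/S$.

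The principal obstacle is this last descent step: verifying that the Iitaka dimension does not drop when one restricts to the Galois-invariant part of the determinant pushforward along a ramified cover. The covering construction of step one and the invocation of Koll\'ar/Kawamata in step two are by comparison routine; the essential new ingredient is the anabelian comparison of Iitaka dimensions, which is precisely what the paper's use of Mochizuki's theory is designed to supply.
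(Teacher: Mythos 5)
Your skeleton matches the paper's: build a cyclic cover $h\colon Y \to X$ branched along a horizontal ample divisor so that $Y_{\bar{\eta}}$ becomes of general type (or has abundant canonical sheaf), apply Koll\'ar/Kawamata to $Y/S$, and descend back to $X/S$. But the two load-bearing steps are assigned to the wrong tools, and in one case to no tool at all. First, you dismiss the comparison of variations as ``a standard moduli-theoretic verification'' and even assert $\var(Y/S)=\var(X/S)$. The inequality actually needed, $\var(Y/S)\geq\var(X/S)$, is precisely the step the paper identifies as the main new difficulty and devotes its entire anabelian section to: it reduces to showing that if $Y\times_S S'$ is birationally a product and $Y$ dominates $X$ over $S$, then $X\times_S S'$ is birationally a product as well, and the paper proves this by showing that the image of $\Gamma_{\overline{S}}$ in $\mathrm{Bir}(X_{\bar{\eta}})$ is finite (a continuous homomorphism from a profinite group to an algebraic space in groups locally of finite type has finite image, and $\mathrm{Bir}$ of a variety of nonnegative Kodaira dimension is such an object) and then invoking Mochizuki's correspondence to trivialize the extension class after a generically finite base change $S'\to S$. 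Nothing in your proposal supplies an argument here.

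Second, you invoke Mochizuki's theory for the sheaf-level descent $\kappa(\det f_{\ast}\omega_{X/S}^{\otimes m'})\geq\kappa(\det g_{\ast}\omega_{Y/S}^{\otimes m})$ via a ``Galois-invariant subsheaf'' of the determinant. Anabelian theorems recover dominant morphisms of varieties from open homomorphisms of Galois groups; they give no handle whatsoever on Iitaka dimensions of determinants of pushforwards, and I see no way to make that step work as stated. The paper instead does this step by an explicit coherent computation: the branch bundle is chosen as $L=\mathcal{O}_X(\phi^{\ast}q^{\ast}(bH))$, pulled back from the $\mathbf{P}^r$-factor of a relative projective presentation $\phi\colon X\to Z=S\times\mathbf{P}^r$, so that $\tau_{\ast}\mathcal{O}_Y=\oplus_i L^{-\otimes i}$ and the projection formula give an inclusion $g_{\ast}\omega_{Y/S}^{\otimes m}\subset\oplus^{d}\oplus_i\mathcal{O}_S^{r_i}\otimes\mathcal{O}_S(\lceil mA\rceil)$, where $A$ is the divisor on $S$ produced by the weak positivity of $\omega_{X/S}$; both $\det g_{\ast}\omega_{Y/S}^{\otimes m}$ and $\det f_{\ast}\omega_{X/S}^{\otimes m}$ are then compared to the single divisor $A$ rather than directly to each other. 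So relative to the paper the roles are exactly reversed: the anabelian input belongs to the variation comparison, the determinant comparison is elementary sheaf theory, and as written neither of your two key steps is actually proved.
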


\noindent
This conjecture implies
\begin{conjecture}
\begin{description}
\item $\kappa(\omega_{X/S})\geq \kappa(\omega_{X_{\bar{\eta}}})+\var(X/S)$
\end{description}
\end{conjecture}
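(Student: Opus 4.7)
The plan is to derive Conjecture~3 from Conjecture~2 via Viehweg's fibre product construction, Iitaka's easy addition, and a descent step using weak positivity of direct images. Set $r=\mathrm{rank}\,f_{\ast}\omega_{X/S}^{\otimes m}=h^0(X_{\bar{\eta}},\omega_{X_{\bar{\eta}}}^{\otimes m})$, which depends only on the generic fibre, and form the $r$-fold fibre product $f^{(r)}\colon X^{(r)}:=X\times_S\cdots\times_S X\to S$, replaced by a smooth birational model if needed. Flat base change gives
\[
f^{(r)}_{\ast}\omega_{X^{(r)}/S}^{\otimes m}\;\cong\;\bigl(f_{\ast}\omega_{X/S}^{\otimes m}\bigr)^{\otimes r},
\]
and antisymmetrisation realises $\det f_{\ast}\omega_{X/S}^{\otimes m}$ as a direct summand of this tensor power.

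By adjunction the resulting inclusion yields a morphism $(f^{(r)})^{\ast}\det f_{\ast}\omega_{X/S}^{\otimes m}\to\omega_{X^{(r)}/S}^{\otimes m}$, injective on the generic fibre because the wedge of $r$ linearly independent sections of $\omega_{X_{\bar{\eta}}}^{\otimes m}$ is nonzero on $X_{\bar{\eta}}^{\times r}$. Hence there is an effective divisor $D$ on $X^{(r)}$ with $\omega_{X^{(r)}/S}^{\otimes m}\cong (f^{(r)})^{\ast}\det f_{\ast}\omega_{X/S}^{\otimes m}\otimes\mathcal{O}_{X^{(r)}}(D)$, where $D|_{X^{(r)}_{\bar{\eta}}}$ represents $m$ times the canonical class of the geometric generic fibre. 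Iitaka's easy addition in lower-bound form then gives
\[
\kappa(\omega_{X^{(r)}/S})\;\geq\;\kappa(\omega_{X^{(r)}_{\bar{\eta}}})+\kappa\bigl(\det f_{\ast}\omega_{X/S}^{\otimes m}\bigr)\;\geq\;r\,\kappa(\omega_{X_{\bar{\eta}}})+\var(X/S),
\]
using Conjecture~2 at the last step.

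To descend to $X$, one uses $H^0(X^{(r)},\omega_{X^{(r)}/S}^{\otimes N})=H^0(S,(f_{\ast}\omega_{X/S}^{\otimes N})^{\otimes r})$ together with Viehweg's weak positivity of $f_{\ast}\omega_{X/S}^{\otimes N}$. Weak positivity combined with bigness of $\det f_{\ast}\omega_{X/S}^{\otimes m}$ upgrades to bigness of the tautological line bundle on $\mathbb{P}(f_{\ast}\omega_{X/S}^{\otimes m})$; pulling back along the relative $m$-canonical map $X\dashrightarrow\mathbb{P}(f_{\ast}\omega_{X/S}^{\otimes m})$ and using the multiplication $\mathrm{Sym}^N f_{\ast}\omega_{X/S}^{\otimes m}\hookrightarrow f_{\ast}\omega_{X/S}^{\otimes mN}$ produces sections of $\omega_{X/S}^{\otimes mN}$ on $X$ of growth order $N^{\kappa(\omega_{X_{\bar{\eta}}})+\var(X/S)}$, yielding Conjecture~3.

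The main obstacle is this descent. A naive Künneth-type comparison between sections on $X^{(r)}$ and on $X$ loses a factor of $r$ and yields only $\kappa(\omega_{X/S})\geq\kappa(\omega_{X_{\bar{\eta}}})+\var(X/S)/r$; recovering the sharp bound requires the weak positivity of $f_{\ast}\omega_{X/S}^{\otimes m}$, i.e.\ the non-trivial input that bigness of the determinant propagates to bigness of the whole sheaf. This is the essential technical ingredient beyond what Conjecture~2 itself provides.
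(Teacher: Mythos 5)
First, a point of comparison: the paper does not actually prove this statement. It is displayed as a Conjecture, and the text merely asserts that it ``follows from'' Conjecture~2, implicitly invoking Viehweg's classical reduction of $C^{+}$ to the determinant conjecture. Your proposal reconstructs exactly that reduction (fibre product, determinant as a direct summand of the tensor power, adjunction, weak positivity), so you are supplying an argument the paper leaves entirely to the reader; the overall strategy is the standard and correct one.

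There is, however, one genuine gap. The step
\[
\kappa(\omega_{X^{(r)}/S})\;\geq\;\kappa(\omega_{X^{(r)}_{\bar\eta}})+\kappa\bigl(\det f_{\ast}\omega_{X/S}^{\otimes m}\bigr)
\]
is not ``easy addition in lower-bound form'': easy addition only gives the upper bound $\kappa(V)\leq\kappa(V_{\bar\eta})+\dim S$, and for line bundles the inequality $\kappa(L\otimes f^{\ast}M)\geq\kappa(L|_{F})+\kappa(M)$ is false in general. The correct tool (Viehweg's Lemma on fibre spaces with a big twist from the base) requires $M$ to be big, i.e.\ $\kappa(\det f_{\ast}\omega_{X/S}^{\otimes m})=\dim S$, whereas Conjecture~2 only yields $\kappa(\det f_{\ast}\omega_{X/S}^{\otimes m})\geq\var(X/S)$, which may be strictly smaller than $\dim S$. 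You therefore need the preliminary reduction to the case $\var(X/S)=\dim S$: pass to a generically finite cover $S'\to S$, factor through the base $T$ of the isotrivial part with $\dim T=\var(X/S)$, prove the inequality for $X_0/T$, and pull back. Your write-up omits this, and without it the middle inequality does not follow. The same bigness hypothesis is what powers your final descent: weak positivity of $f_{\ast}\omega_{X/S}^{\otimes m}$ together with bigness (not merely non-negativity of $\kappa$) of its determinant gives bigness of the sheaf, hence an injection $f^{\ast}A\hookrightarrow\omega_{X/S}^{\otimes mN}$ for some ample $A$, after which the desired inequality is obtained directly on $X$ and the fibre product is no longer needed. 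A minor further point: $f^{(r)}_{\ast}\omega_{X^{(r)}/S}^{\otimes m}\cong(f_{\ast}\omega_{X/S}^{\otimes m})^{\otimes r}$ holds only over the smooth locus of $f$, and after desingularising $X^{(r)}$ it survives only as an inclusion that is generically an isomorphism; that suffices for your purposes but should be stated.
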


\noindent
Iitaka conjecture $C$ follows from the conjecture above.
I thank deeply Prof. Noboru Nakayama for discussion.

\section{Preliminary}
\begin{definition}
\item
Let $k$ be a field.
A geometrically irreducible, reduced, smooth scheme $X$ over $k$ is said to be a non singular variety over $k$.
\item
Let $X$ be a non singular variety of dimension $d$ and $\Omega_X$ the differential sheaf over $X$. $\omega_X$ denotes
$\Omega_X^d$.
\item A connected proper surjective mophism $f: X \to S$ of non singular varieties $X$ and $S$ is said to be a fibre space $X/S$.
\item Let $f: X \to S$ be a fibre space $X/S$. $\omega_{X/S}$ denotes $\omega_X\otimes\,f^{\ast}\omega_S^{-1}$.
\item Let $L$ be an invertible sheaf over $X$. $\kappa(L)$ denotes the maximal dimension of the image variety of the rational map $ X \to \mathbf{P}(\Gamma(X,L^{\otimes m}))$ defined by
$\Gamma(X,L^{\otimes m})\otimes \mathrm{O}_X \to L^{\otimes m}$. We call $\kappa(L)$ Iitaka dimension of $L$.
\item $\kappa(\omega_X)$ is said to be Kodaira dimension, which is denoted by $\kappa(X)$.
\item Let $X/S$ be a fibre space. The minimal dimension of $T$ such that there exists a generically finite morphism $S^{\prime} \to S$ in which $X\times_S S^{\prime}$ is birationally equivalent to $S^{\prime}\times_T X_0$ for some varieties $T$, $X_0$ with $X_0/T$ a fibre space. This dimension denotes $\mathrm{var}(X/S)$, which is called Viehweg dimension of a fibre space $X/S$.
\[
	\xymatrix{
     &  X\times S^{\prime}\ar[dd]\ar[r]\ar[ld]      & X_0\times_T S^{\prime}\ar[dddr]\ar[ldd]  &\\
 X\ar[d]	&                           &           &              \\
 S   &  S^{\prime}\ar[l]\ar[ddr]      &            &                           \\
       &      &               &        X_0\ar[dl]          \\
       &      &       T                           \\
	}
\]
\item The category of bands of profinite groups is defined in the following.
The objects are the profinite groups and the arrows are the homomorphisms of profinite groups modulo inner automorphisms.
\item A $\mathbf{Q}$-divisor $D$ is said to be effective if $\kappa(D) \geq 0$. Similarly, we say that a cycle of codimension $1$ is effective if every coefficient is non negative.
\end{definition}

\begin{definition}
Define a functor $\D_{X/S}$ from the categoy of $S$-schemes to that of sets by the formula
$$\D_{X/S}(T)= \{ \text{ relative effective divisors} 
\;\; D \text{ on}\;\; X_T/T \;\; \}.$$
\end{definition}
\begin{lemma}
Assume $X/S$ is projective and flat. Then $\D_{X/S}$ is representable by an open subscheme of the Hilbert scheme $\mathbf{Hilb}_{X/S}$.
\end{lemma}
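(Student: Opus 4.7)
The plan is to construct a natural transformation $\D_{X/S} \to h_{\mathbf{Hilb}_{X/S}}$ by forgetting the Cartier structure and keeping only the underlying closed subscheme, and then to show that this transformation is representable by open immersions. A relative effective divisor $D \subset X_T$ is locally cut out by a single non-zero-divisor, so $\mathcal{O}_D$ is flat over $\mathcal{O}_T$, and $D$ is closed in the proper scheme $X_T$, so $D$ defines a $T$-point of $\mathbf{Hilb}_{X/S}$. The assignment is injective on $T$-points because the divisor structure is recovered from the closed subscheme structure. The content of the lemma is therefore to identify, for any closed subscheme $Z \subset X_T$ that is proper and flat over $T$, the largest open $U \subset T$ over which $Z|_{X_U}$ is a relative effective Cartier divisor.

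First I would work locally in $X_T$. Let $\mathcal{I}_Z \subset \mathcal{O}_{X_T}$ be the ideal sheaf, and set
$$W := \{ z \in X_T \mid \mathcal{I}_Z \text{ is invertible at } z \}.$$
This $W$ is open in $X_T$ by Nakayama's lemma: if $f \in \mathcal{I}_{Z,z}$ has image generating $\mathcal{I}_{Z,z} \otimes k(z)$, then $f$ generates $\mathcal{I}_{Z,z}$ and, by coherence, generates $\mathcal{I}_Z$ in a neighbourhood. Now put $U := T \setminus f(X_T \setminus W)$ where $f : X_T \to T$ is the projection; this is open in $T$ because $f$ is proper.

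Next I would check that this $U$ represents the fibre product. For $t \in U$, every point of $Z_t$ lies in $W$, so $\mathcal{I}_Z$ is invertible along $Z$ over the open $X_T \times_T U$. It remains to see that a local generator $\varphi \in \mathcal{I}_{Z,z}$ is a non-zero-divisor in $\mathcal{O}_{X_T,z}$, not only in the fibre $\mathcal{O}_{X_t,z}$. This is where flatness is used twice: $Z \to T$ is flat by hypothesis on the Hilbert functor, and $X_T/T$ is flat, so the exact sequence
$$0 \to \mathcal{I}_Z \to \mathcal{O}_{X_T} \to \mathcal{O}_Z \to 0$$
remains exact upon restriction to each fibre; hence the image of $\varphi$ in $\mathcal{O}_{X_t,z}$ being a non-zero-divisor implies, by the local criterion of flatness, that $\varphi$ itself is a non-zero-divisor at $z$. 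Conversely, any $T' \to T$ that factors through $\D_{X/S}$ has invertible pullback ideal sheaf, forcing $T' \to T$ to land in $U$. This gives $U$ as the desired open subscheme.

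The main obstacle will be the openness step together with the flatness bookkeeping: one must check that the Cartier locus is open in $X_T$ (done via Nakayama), that its image under the proper projection is closed in $T$ (so that $U$ is open), and that at each point of $U$ a fibrewise generator lifts to an honest non-zero-divisor on the total space. Once these local verifications are in place, the universal property of $U$ as the fibre product $\D_{X/S} \times_{\mathbf{Hilb}_{X/S}} T$ is essentially formal, and the lemma follows.
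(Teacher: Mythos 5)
The paper does not actually prove this lemma: it is stated bare, as a citation of the classical fact (due to Grothendieck, FGA; see also Koll\'ar, \emph{Rational curves on algebraic varieties}, Ch.~I) that the relative divisor functor is an open subfunctor of the Hilbert functor. Your argument supplies exactly the standard proof of that fact, and it is essentially correct: you build the forgetful transformation $\D_{X/S}\to \mathbf{Hilb}_{X/S}$, and for a flat proper closed subscheme $Z\subset X_T$ you cut out the largest open $U\subset T$ over which $Z$ is a relative effective Cartier divisor, using properness of $X_T\to T$ to pass from an open locus upstairs to an open locus in $T$, and the local criterion of flatness to promote a fibrewise non-zero-divisor to a non-zero-divisor on the total space. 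The one place your write-up wobbles is the definition of $W$: if $W$ is literally the locus where $\mathcal{I}_Z$ is \emph{invertible}, then a local generator is a non-zero-divisor by definition and your subsequent ``it remains to see'' step is vacuous; if instead $W$ is the locus where $\mathcal{I}_Z$ is \emph{locally principal} (which is what the Nakayama argument actually yields), then the flatness step is genuinely needed and the logic is clean. You should adopt the second reading, and note in passing that injectivity of $\D_{X/S}(T)\to\mathbf{Hilb}_{X/S}(T)$ is immediate since a relative effective divisor \emph{is} its underlying closed subscheme, so no structure is forgotten. With that small reordering your proof is complete and is the argument the paper implicitly relies on.
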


%%%%%%%%%%%%%%%%%%%%%%%%%%%%%%%%%%%%%%%%%%%%%%%%%%%%%%%%%%%%%%%%%%%%%%%%%%%%%%%
%%%%%%%%%%%%%%%%%%%%%%%%%%%%%%%%%%%%%%%%%%%%%%%%%%%%%%%%%%%%%%%%%%%%%%%%%%%%%%%%%%%%%%%%%%%%%%%%%%%%%%%%%%%%%%%%%%%%%%%%%%%%%%%%%%%%%%%%%%%%%%%%%%%%%%%%%%%%%%%%

Let $k$ be a field of characteristic $0$. 
Let $X$ be a projective normal variety over $k$ and let $\cM_X$ the sheaf of rational functions of $X$,
$\cM_X^*$ the sheaf of invertible rational functions of $X$, which is a subsheaf of $\cM_X$ and $\mathcal{O}_X^*=\mathcal{O}_X\cap \cM_X^*$.

\begin{definition}
\begin{description}
\item  $\Div_X=\cM_X/\mathcal{O}_X^*$, $\mathrm{Div}(X)=\Gamma(X,\Div_X)$
\item An invertible $\mathcal{O}_X$-submodule of $\cM_X$ is said to be an invertible fractional sheaf, for example, $\mathcal{O}_X(D)$ for a divisor $D$.
\item An invertible $\mathcal{O}_X$-module is said to be an invetible sheaf. The set of equivalence classes of couples $(L,s)$ denotes $D(X)$. Here $L$ is an invertible sheaf, $s$ is a non $0$ global section of $L$. $(L,s)$ and $(L^{\prime},s^{\prime})$ are equivalent if there exists  an isomorphism $u: L \to L^{\prime}$ such that $u(s)=s^{\prime}$.
\item  An element of $\mathrm{Pic}(X)\otimes \mathbf{Q}$ is said to be a $\mathbf{Q}$-invertible sheaf.
\item  An element of $\mathrm{Div}(X)\otimes \mathbf{Q}$ is said to be a $\mathbf{Q}$-divisor.
\end{description}
\end{definition}

\begin{fact}
\begin{description}
\item The order preserving homomorphism $cyc : \mathrm{Div}(X) \to Z^1(X)$ which assignes a divisor a cycle of codimension $1$ is injective and the image $cyc(\mathrm{Div}(X))$ consists of locally principal divisors.
\item 
Let $\mathrm{Z}^1(X)$ denote the free group generated by the cycles of codimension one on $X$.
For every $x \in X$ $\mathcal{O}_{X,x}$ is factorial if and only if $cyc: \mathrm{Div}(X) \to \mathrm{Z}^1(X)$ is bijective
\end{description}
\end{fact}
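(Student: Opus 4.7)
The plan is to unfold the definitions and reduce both assertions to standard commutative algebra: normality of $X$ handles the injectivity and the ``locally principal'' characterization, while the classical criterion that a Noetherian domain is factorial iff every height-one prime is principal (Kaplansky--Matsumura) handles the bijectivity equivalence.

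For the first assertion, I would work locally with the definition $\mathrm{Div}(X) = \Gamma(X, \cM_X^*/\mathcal{O}_X^*)$. A Cartier divisor $D$ is given on some open cover $\{U_i\}$ by representatives $f_i \in \Gamma(U_i, \cM_X^*)$ with $f_i/f_j \in \mathcal{O}_{U_i \cap U_j}^*$, and $cyc(D)$ is by construction $\sum_Y \mathrm{ord}_Y(f_i)\, Y$ on each $U_i$; this already shows that the image consists of locally principal cycles. For injectivity, suppose $cyc(D)=0$. Since $X$ is normal, $\mathcal{O}_{X,\eta_Y}$ is a discrete valuation ring at every codimension-one point $\eta_Y$, and $\mathrm{ord}_Y(f_i)=0$ gives $f_i \in \mathcal{O}_{X,\eta_Y}^*$. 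Normality also supplies the algebraic Hartogs identity $\mathcal{O}_X = \bigcap_{\mathrm{ht}=1} \mathcal{O}_{X,\eta_Y}$ inside $\cM_X$, applied to both $f_i$ and $f_i^{-1}$; hence $f_i \in \mathcal{O}_{U_i}^*$ and $D=0$.

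For the second assertion, assume first that $cyc$ is bijective. Fix $x \in X$ and a height-one prime $\mathfrak{p} \subset \mathcal{O}_{X,x}$, corresponding to a codimension-one subvariety $Y$ in some affine neighborhood. Surjectivity of $cyc$ puts $Y$ in the image, so by the first part $Y$ is locally principal near $x$, i.e.\ $\mathfrak{p}$ is a principal ideal of $\mathcal{O}_{X,x}$. Kaplansky's theorem then yields that $\mathcal{O}_{X,x}$ is factorial. Conversely, assume every $\mathcal{O}_{X,x}$ is factorial. Given a Weil cycle $W = \sum n_Y Y \in \mathrm{Z}^1(X)$, at each $x$ the finitely many components $Y$ through $x$ correspond to height-one primes, each principal in $\mathcal{O}_{X,x}$; choosing generators $g_{Y,x}$ produces a local equation $f_x = \prod g_{Y,x}^{n_Y} \in \cM_{X,x}^*$ for $W$ near $x$. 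These local equations agree modulo $\mathcal{O}^*$ on overlaps (both define the same cycle on a normal scheme, so by the injectivity already proved they differ by a unit), and therefore glue to a section of $\cM_X^*/\mathcal{O}_X^*$, i.e.\ a Cartier divisor mapping to $W$ under $cyc$.

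The main obstacle I expect is bookkeeping at the gluing step in the converse direction: ensuring that the local generators produced by factoriality differ on overlaps by sections of $\mathcal{O}_X^*$ rather than merely by units of $\cM_X^*$. This is precisely where normality and the already-established injectivity of $cyc$ are used. The remaining steps are standard and I would invoke Matsumura's treatment of Krull domains and Cartier vs.\ Weil divisors rather than redo them in detail.
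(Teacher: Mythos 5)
Your argument is correct and is the standard one: the paper states these assertions as a bare \emph{Fact} with no proof (they are the classical Cartier--Weil comparison on a normal variety, cf.\ EGA IV 21.6.9), and your route --- order functions at codimension-one points plus algebraic Hartogs on the normal $X$ for injectivity and the locally principal description of the image, then the Kaplansky--Nagata height-one-prime criterion with gluing of local equations for the factoriality equivalence --- is exactly the proof those sources give. The gluing concern you flag is resolved just as you say: the ratio of two local equations for the same cycle has order zero along every prime divisor through the overlap, hence lies in $\mathcal{O}_X^*$ by normality applied to the ratio and its inverse.
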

Let $X^{(1)}$ denote the set of points $x \in X$ such that $\dim \mathcal{O}_{X,x} = 1$.
Let $f: X \to S$ be a finite surjective morphism.
Let $D' = \sum _{x' \in X^{(1)}} n_{x'}\overline{\{x'\}}$ be a codimension 1 cycle.
For $x \in X^{(1)}$, put $n_x= \sum_{x' \in f^{-1}(x)}n_{x'}[k(x'):k(x)]$.
$f_*(D') = \sum_{x \in X^{(1)}}n_x \overline{\{x\}}$

Suppose that $f$ is flat and that $Z$ is non singular.

Let $D=\sum_{x \in S^{(1)}}n_x\overline{\{x\}}$ be a  cycle of codimension $1$.
Put $\lambda_{x'} = length(\mathcal{O}_{X,x'}/\cM_X\mathcal{O}_{X,x'}$ and
$n_{x'}=\lambda_{x'}n_{x}$. Then
$f^{*}D= \sum_{x' \in X^{(1)}}n_{x'}\overline{\{x'\}}$.

\section{Horizontal Hypersurface}

Our main aim is to show the following theorem.

\begin{theorem}
Let $f: X \to S$ be a fibre space of non singular varieties. Assume that $\kappa(\omega_{X_{\bar{\eta}}}) \geq 0$ for the generic geometric fibre
$X_{\bar{\eta}}$. Then
there exists an integer $m > 0$ such that $\kappa(\det\;f_{*}\omega_{X/S}^{\otimes m}) \geq \mathrm{var}(X/S)$.
\end{theorem}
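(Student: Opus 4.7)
Following the outline in the abstract, the plan is to reduce Viehweg's conjecture for $X/S$ to the cases already settled by Koll\'ar (general type fibres) and Kawamata (abundant canonical), via an auxiliary ramified cover. First I would exhibit a \emph{horizontal hypersurface} $H\subset X$, meaning an irreducible divisor whose restriction $H|_{X_{\bar\eta}}$ to the generic geometric fibre is sufficiently positive, say very ample. Assuming $X/S$ projective, as one may after a suitable blow-up, the representability of $\D_{X/S}$ by an open subscheme of $\mathbf{Hilb}_{X/S}$ proved above supplies a whole family of such $H$'s parametrised by a dominant component of the Hilbert scheme, which will be useful when later allowing $H$ itself to vary.

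Fixing such an $H$ divisible by a suitable $n$, I would build an auxiliary fibre space $g=f\circ\pi\colon Y\to S$ by taking a degree-$n$ cyclic cover $\pi\colon Y\to X$ branched along $H$, after a log resolution to keep $Y$ non-singular. A Hurwitz-type computation gives
\[ \omega_{Y/S}\;\cong\;\pi^{*}\omega_{X/S}\otimes\mathcal{O}_Y\bigl((n-1)\tilde H\bigr), \]
where $\tilde H$ is the ramification divisor. For $n$ and $H$ chosen large enough, $\omega_{Y_{\bar\eta}}$ becomes big, so $Y_{\bar\eta}$ is of general type and Koll\'ar's theorem applies to $Y/S$, producing an integer $m>0$ with
\[ \kappa\bigl(\det g_{*}\omega_{Y/S}^{\otimes m}\bigr)\;\geq\;\mathrm{var}(Y/S). \]
Decomposing $\pi_{*}\omega_{Y/S}^{\otimes m}$ into eigensheaves under the $\mathbf{Z}/n$-action and pushing down to $S$, the determinant on the $Y$-side becomes $\det f_{*}\omega_{X/S}^{\otimes Nm}$ tensored with an effective correction supported over the image of $H$; because $H$ is horizontal, this twist does not lower Iitaka dimension, yielding
\[ \kappa\bigl(\det f_{*}\omega_{X/S}^{\otimes Nm}\bigr)\;\geq\;\kappa\bigl(\det g_{*}\omega_{Y/S}^{\otimes m}\bigr). \]

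The real obstacle is the remaining inequality $\mathrm{var}(X/S)\leq\mathrm{var}(Y/S)$. A priori taking a branched cover might create extra isotriviality: a family of $Y_t$'s could become birationally constant after a base change while the underlying $X_t$'s still vary in moduli, which would give the wrong direction. To rule this out I would invoke Mochizuki's Galois-theoretic reconstruction, phrased in the language of bands of profinite groups introduced in the preliminaries. The pair $(Y\to X, H)$ together with its Galois action is functorially encoded in the profinite fundamental-group datum, so any birational equivalence between two $Y_t$ respecting the Galois band descends canonically to a birational equivalence of the pairs $(X_t,H_t)$, and in particular of the $X_t$ themselves. Combining this anabelian descent with the two inequalities above finishes the proof, and it is this descent step — keeping the band structure coherent across the whole family while avoiding a loss of information on inner automorphisms — that I expect to absorb most of the work.
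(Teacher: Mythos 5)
Your overall architecture matches the paper's: a cyclic cover $Y\to X$ branched along a horizontal divisor, Koll\'ar/Kawamata applied to $Y/S$, a determinant comparison, and an anabelian argument for $\mathrm{var}(Y/S)\geq\mathrm{var}(X/S)$. The genuine gap is in the middle step. If, as you write, $\det g_{*}\omega_{Y/S}^{\otimes m}$ is $\det f_{*}\omega_{X/S}^{\otimes Nm}$ tensored with an \emph{effective} correction, the inequality you can actually extract is $\kappa(\det g_{*}\omega_{Y/S}^{\otimes m})\geq\kappa(\det f_{*}\omega_{X/S}^{\otimes Nm})$ --- the opposite of what you assert. The whole difficulty of the construction is that ramification adds positivity to $\omega_{Y/S}$, and ``$H$ is horizontal'' does nothing to control the component of that extra positivity lying over $S$: an irreducible horizontal divisor can perfectly well be a general member of $|A+f^{*}B|$ with $B$ ample on $S$, in which case the correction is genuinely big on $S$ and your comparison fails. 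The paper neutralises this by a very specific choice: it factors $X\to X''\to Z=S\times\mathbf{P}^{r}$ through the Stein factorization of the map induced by a transcendence basis of $R(X)/R(S)$, and takes the branch divisor in the linear system of $\phi^{*}q^{*}(bH)$, a pullback from the $\mathbf{P}^{r}$ factor. Then $p_{*}\mathcal{O}_{Z}(kq^{*}H)\cong\mathcal{O}_{S}^{r_{k}}$ and $\phi_{*}\mathcal{O}_{X}\subset\oplus^{d}\mathcal{O}_{Z}$, so every eigensheaf of $g_{*}\omega_{Y/S}^{\otimes m}$ embeds into $\oplus\,\mathcal{O}_{S}^{r_{i}}\otimes\mathcal{O}_{S}(\lceil mA\rceil)$ for a single class $A$ on $S$ satisfying $\mathcal{O}_{S}([mA])\subset f_{*}\omega_{X/S}^{\otimes m}$; a determinant lemma for subsheaves of $\mathcal{O}^{n}\otimes L$ then gives $\kappa(\mathcal{O}_{S}(mA))\geq\kappa(\det g_{*}\omega_{Y/S}^{\otimes m})$, and weak positivity of the quotient $f_{*}\omega_{X/S}^{\otimes m}/\mathcal{O}_{S}([mA])$ converts bigness of $\mathcal{O}_{S}(mA)$ into bigness of $\det f_{*}\omega_{X/S}^{\otimes m}$. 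Without this (or an equivalent device forcing the ramification to contribute only trivial summands over $S$), your key inequality is unsupported.

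A secondary concern: in the anabelian step you require the birational equivalences trivialising $Y/S$ after base change to \emph{respect the Galois band} of the cover so as to descend to the $X_{t}$, but the isomorphisms furnished by $\mathrm{var}(Y/S)=v$ carry no such equivariance, so this descent does not come for free. The paper instead uses only the existence of a dominant $S'$-rational map $Y_{0}\times_{T}S'\to X\times_{S}S'$: after base change to $\overline{k(\zeta)}$ the source becomes a product, the dominant map produces a section of $1\to\Gamma_{X_{\bar\eta}}\to\Gamma_{X}\to\Gamma_{S}\to 1$, and the finiteness of the image of $\Gamma_{\overline{S}}$ in $\mathrm{Bir}(X_{\bar\eta})$ (an algebraic space in group locally of finite type precisely because $\kappa(X_{\bar\eta})\geq 0$) forces isotriviality of $X/S$ after a further generically finite base change, whence $\mathrm{var}(X/S)\leq\mathrm{var}(Y/S)$. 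You should reformulate your descent along those lines rather than through equivariant isomorphisms of the covers.
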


Viehweg's Lemma:
\begin{lemma}
Let $S^{\prime} \to S$ be a Kummer-Kawamata covering with respect to an ample divisor. Let $X\times_SS^{\prime} \to S^{\prime}$ be the pull-back. Then $X\times_SS^{\prime}$ has rational singularity. Further take a desingularization $X^{v} \to X\times_SS^{\prime}$. Then $\kappa(\det\,f^{v}_*\omega_{X^{v}/S^{\prime}}^{\otimes m}) \leq \kappa(\det\,f_{*}\omega_{X/S}^{\otimes m})$.
\end{lemma}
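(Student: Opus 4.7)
The plan is to exploit that a Kummer-Kawamata covering $g : S^{\prime} \to S$ is finite, flat, and locally toric, so the base change $X^{\prime} := X\times_S S^{\prime}$ inherits only quotient (hence canonical) singularities, whose reflexive powers of the canonical sheaf agree with those of a desingularization. I would organise the commutative diagram
\[
\xymatrix{
X^{v} \ar[d]_{f^{v}} \ar[r]^{\pi} & X^{\prime} \ar[d]_{f^{\prime}} \ar[r]^{g^{\prime}} & X \ar[d]^{f} \\
S^{\prime} \ar@{=}[r] & S^{\prime} \ar[r]_{g} & S
}
\]
and then compare $f^{v}_{*}\omega_{X^{v}/S^{\prime}}^{\otimes m}$ with $g^{*} f_{*}\omega_{X/S}^{\otimes m}$ in three steps.

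First, I would verify that $X^{\prime}$ has rational singularities. \'Etale-locally the Kummer-Kawamata covering takes the form $t_i\mapsto t_i^{n_i}$ along the components of the branch divisor, so $X^{\prime}$ is \'etale-locally a quotient of a smooth scheme by a finite abelian group. In characteristic zero such quotient singularities are canonical, hence rational, yielding $R^{i}\pi_{*}\mathcal{O}_{X^{v}} = 0$ for $i>0$ together with a canonical inclusion $\pi_{*}\omega_{X^{v}/S^{\prime}}^{\otimes m} \hookrightarrow \omega_{X^{\prime}/S^{\prime}}^{[m]}$ which is an isomorphism in codimension one.

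Second, since the square $X^{\prime}\to X$ over $S^{\prime}\to S$ is Cartesian with $g$ flat, on the smooth locus of $X^{\prime}$ one has $\omega_{X^{\prime}/S^{\prime}}^{\otimes m} = (g^{\prime})^{*}\omega_{X/S}^{\otimes m}$, and flat base change gives
\[
f^{\prime}_{*}\omega_{X^{\prime}/S^{\prime}}^{[m]} \cong g^{*} f_{*}\omega_{X/S}^{\otimes m}.
\]
Composing with step one produces an injection $f^{v}_{*}\omega_{X^{v}/S^{\prime}}^{\otimes m} \hookrightarrow g^{*} f_{*}\omega_{X/S}^{\otimes m}$, and passing to determinants gives an injection of line bundles $\det f^{v}_{*}\omega_{X^{v}/S^{\prime}}^{\otimes m} \hookrightarrow g^{*}\det f_{*}\omega_{X/S}^{\otimes m}$.

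Finally, for a finite surjective morphism $g$ between non singular projective varieties and any line bundle $L$ on $S$ one has $\kappa(g^{*}L)=\kappa(L)$, so
\[
\kappa(\det f^{v}_{*}\omega_{X^{v}/S^{\prime}}^{\otimes m}) \leq \kappa(g^{*}\det f_{*}\omega_{X/S}^{\otimes m}) = \kappa(\det f_{*}\omega_{X/S}^{\otimes m}),
\]
as required. The main obstacle I anticipate is the technical bookkeeping in the second step: one has to confirm that the quotient-singularity structure on $X^{\prime}$ is genuinely compatible with flat base change, that the inclusion $\pi_{*}\omega_{X^{v}/S^{\prime}}^{\otimes m} \hookrightarrow \omega_{X^{\prime}/S^{\prime}}^{[m]}$ pushes forward under $f^{\prime}_{*}$ to an inclusion with torsion cokernel, and that taking determinants introduces no spurious positive contribution along the branch locus that would reverse the inequality — this is the essence of Viehweg's covering trick in this setting.
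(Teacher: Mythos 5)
The paper itself offers no proof of this statement: it is quoted as ``Viehweg's Lemma'' and used as a black box, so there is nothing internal to compare you against. Your argument is essentially the standard covering-trick proof from Viehweg's work (reduce to an inclusion of sheaves $f^{v}_{*}\omega_{X^{v}/S^{\prime}}^{\otimes m}\hookrightarrow g^{*}f_{*}\omega_{X/S}^{\otimes m}$, pass to determinants, and use $\kappa(g^{*}L)=\kappa(L)$ for $g$ finite surjective), and the overall strategy is correct. Two points deserve more care than you give them. First, $X\times_{S}S^{\prime}$ is \'etale-locally an abelian \emph{cover} of the smooth $X$ branched along the pullback of the branch divisor of $g$, not a quotient of a smooth scheme; it need not even be normal, and it is the normalization whose singularities are toric (hence abelian quotient, hence rational). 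One should either normalize before desingularizing or note that $\pi$ factors through the normalization; the conclusion $R^{i}\pi_{*}\mathcal{O}_{X^{v}}=0$ survives, but your stated reason does not. Second, the identification $\omega_{X^{\prime}/S^{\prime}}=(g^{\prime})^{*}\omega_{X/S}$ on the smooth locus is false where $f$ has non-reduced fibres over the branch divisor: there $\omega_{X^{\prime}/S^{\prime}}=(g^{\prime})^{*}\omega_{X/S}(R-f^{\prime*}R_{S^{\prime}})$ with $R-f^{\prime*}R_{S^{\prime}}\leq 0$. Fortunately this failure is in the favourable direction --- it only shrinks the subsheaf --- so the injection into $g^{*}f_{*}\omega_{X/S}^{\otimes m}$ and hence the inequality $\kappa(\det f^{v}_{*}\omega_{X^{v}/S^{\prime}}^{\otimes m})\leq\kappa(\det f_{*}\omega_{X/S}^{\otimes m})$ still follow; but the step should be stated as an inclusion rather than an equality, since an equality in the other direction would reverse the inequality you are trying to prove.
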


We shall prove the theorem above in the following several steps.
By Viehweg in order to prove the theorem, we can assume further that $\mathrm{var}(X/S)=\dim S$ and show the theorem.

\[
	\xymatrix{
    &  X^{v}\ar[d]  &    &    \\
 X^{\prime}\ar[d]  &  X\times_S S^{\prime}\ar[dd]\ar[r]\ar[ld]      & X_0\times_T S^{\prime}\ar[dddr]\ar[ldd]  &\\
 X\ar[d]	&                           &           &              \\
 S   &  S^{\prime}\ar[l]\ar[ddr]      &            &                           \\
       &      &               &        X_0\ar[dl]          \\
       &      &       T                           \\
	}
\]

Let $f: X \to S$ be a fibre space. From the extension of the functon fields $R(X)/R(S)$, we have purely transcendental indeterminates $t_1,\cdots, t_r$ over $R(S)$ such that $R(X)/R(S)(t_1,\cdots,t_r)$ is a finite extension of degree $d$.
Hence we obtain a dominant rational map $X \to S\times \mathbf{P}^r$.
Resolving the indeterminacy of the rational map $X \to S\times \mathbf{P}^r$, we have a birational map $X^{\prime} \to X$ and a morphism $\phi: X^{\prime} \to S\times \mathbf{P}^r$. We replace $X^{\prime}$ by $X$ and let $Z$ denote $S\times \mathbf{P}^r$.
Let $X^{\prime\prime}$ be the integral closure of $Z$ in the function field $R(X)$. Namely, $\mu: X \to X^{\prime\prime}$ with $\nu: X^{\prime\prime} \to Z$ is Stein factoization.
Let $\mu: X \to X^{\prime\prime}$ be the structure morphism.

\[
\xymatrix{
  X\ar[ddd]_f\ar[dr]^{\mu} \ar[ddr]_{\phi}   &    &     \\
       & X^{\prime\prime}\ar[d]^{\nu} &  \\
       &     Z=S\times\mathbf{P}^r\ar[dl]^{p} \ar[r]^{\hskip0.7cm q}           &  \mathbf{P}^r \\
  S    &                 &               \\
}
\]

Recall the next lemma.
\begin{lemma}
$\omega_{X/S}$ is weakly positive with respect to $f$.
\end{lemma}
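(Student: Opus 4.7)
The plan is to follow the two-step strategy of Fujita--Kawamata--Viehweg: first establish semi-positivity of $f_{*}\omega_{X/S}$ itself via Hodge theory, and then bootstrap to all tensor powers $f_{*}\omega_{X/S}^{\otimes m}$ via Viehweg's fibre product construction. Recall that the assertion ``$\omega_{X/S}$ is weakly positive with respect to $f$'' amounts to asking that, for every integer $m\geq 1$ and every ample line bundle $\mathcal{H}$ on $S$, there exists $\beta>0$ such that $\hat S^{\beta}(f_{*}\omega_{X/S}^{\otimes m})\otimes\mathcal{H}^{\otimes\beta}$ is generated by global sections at the generic point of $S$.

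First I would handle the case $m=1$. After shrinking $S$ so that $f$ is smooth, $R^{\dim X-\dim S}f_{*}\mathbf{C}$ underlies a polarised variation of Hodge structures whose bottom Hodge piece, on a smooth compactification chosen so that the discriminant is a simple normal crossing divisor, is identified with $f_{*}\omega_{X/S}$ via the canonical extension. Kawamata's extension of Fujita's semi-positivity theorem then shows that the canonical extension of the bottom Hodge piece is nef, hence $f_{*}\omega_{X/S}$ is semi-positive; in particular it is weakly positive on $S$.

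For general $m\geq 2$ I would use Viehweg's fibre product trick. Form the $s$-fold fibre product $X^{(s)}=X\times_{S}\cdots\times_{S}X$ and take a desingularisation $\tilde X^{(s)}\to X^{(s)}$; let $f^{(s)}:\tilde X^{(s)}\to S$ denote the induced fibre space. There is a natural map
\[
\bigotimes^{s}f_{*}\omega_{X/S}^{\otimes m}\longrightarrow f^{(s)}_{*}\omega_{\tilde X^{(s)}/S}^{\otimes m}
\]
which is an isomorphism over the smooth locus of $f$. Choose $s$ and an auxiliary horizontal divisor so that, via Kawamata's covering trick together with a cyclic cover of degree $m$ ramifying along this divisor, one produces a fibre space $g:W\to S$ for which $g_{*}\omega_{W/S}$ contains $f^{(s)}_{*}\omega_{\tilde X^{(s)}/S}^{\otimes m}$ as a direct summand (using the eigensheaf decomposition of the cyclic cover). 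The cover has rational singularities by Esnault--Viehweg vanishing, so applying the $m=1$ semi-positivity established above to $g_{*}\omega_{W/S}$ transfers weak positivity to $(f_{*}\omega_{X/S}^{\otimes m})^{\otimes s}$, and hence to $f_{*}\omega_{X/S}^{\otimes m}$ itself.

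The main obstacle is the second step: one must control the singularities of $X^{(s)}$ and of the cyclic cover sufficiently well that the eigensheaf decomposition really splits off $f^{(s)}_{*}\omega_{\tilde X^{(s)}/S}^{\otimes m}$, and one must verify that weak positivity is preserved under tensor roots and under passage from $S'\to S$ generically finite covers. Both points are handled by Viehweg's standard calculus of weakly positive sheaves, provided one has set up the fibre product resolution equivariantly and kept the branch locus inside a simple normal crossing divisor transverse to $f$.
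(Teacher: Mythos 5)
The paper gives no proof of this lemma at all: it is introduced with ``Recall the next lemma'' and then glossed by the property that $\kappa(\omega_{X/S}^{\otimes \alpha}\otimes f^*L) \geq \dim S$ for every $\alpha>0$ and every big $\mathbf{Q}$-invertible sheaf $L$ on $S$; that is, it is cited as a known result (Viehweg's weak positivity theorem). Your proposal reconstructs the standard Fujita--Kawamata--Viehweg argument for that known result, and as a sketch it is essentially the accepted route: Hodge-theoretic semipositivity of the canonical extension of the bottom Hodge piece for $m=1$ (you should add the reduction to unipotent local monodromies by a finite base change before invoking Kawamata's nefness statement, and the descent of weak positivity back down the base change), then the fibre-product and cyclic-cover bootstrap for $m\geq 2$, where the eigensheaf piece gives an inclusion into $g_*\omega_{W/S}$ rather than literally a direct summand. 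So there is nothing to compare against in the paper itself; you are supplying a proof where the author supplies a citation.

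One substantive mismatch to fix: the statement you prove is weak positivity of the sheaves $f_*\omega_{X/S}^{\otimes m}$ on $S$, whereas the lemma as the paper uses it is a statement about the line bundle $\omega_{X/S}$ on $X$ ``with respect to $f$,'' namely the Iitaka-dimension inequality quoted above. To close the gap you need the final (standard but not free) step: generic global generation of $\hat S^{\beta}(f_*\omega_{X/S}^{\otimes m})\otimes \mathcal{H}^{\otimes\beta}$ produces, via the evaluation map $f^*f_*\omega_{X/S}^{\otimes m}\to\omega_{X/S}^{\otimes m}$, enough sections of $\omega_{X/S}^{\otimes m\beta}\otimes f^*\mathcal{H}^{\otimes\beta}$ to dominate the image of $S$ under the associated rational map, and here the hypothesis $\kappa(\omega_{X_{\bar\eta}})\geq 0$ is what guarantees $f_*\omega_{X/S}^{\otimes m}\neq 0$ for suitable $m$ so that the evaluation map is nonzero. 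Without that last paragraph your argument proves a true theorem but not literally the statement the paper goes on to use when it writes $\omega_{X/S}^{\otimes\alpha}\otimes f^*L=\mathcal{O}_X(D)$ with $D$ effective.
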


In other words, given any $\alpha > 0$ and any big $\mathbf{Q}$-invertible sheaf $L$ over $S$, it holds that $\kappa(\omega_{X/S}^{\otimes \alpha}\otimes f^*L) \geq \dim S$.

We have a rational map $X \to \mathbf{P}(\Gamma(X,(\omega_{X/S}\otimes f^*L)^{\otimes m}))$ defined by $\mathcal{O}_X\otimes \Gamma(X,(\omega_{X/S}\otimes f^*L)^{\otimes m}) \to (\omega_{X/S}\otimes f^*L)^{\otimes m}$ for $m >> 0$. Take a resolution of the indeterminacy of the rational map, which is denoted by by $X^* \to X$. Then replace $X^*$ by $X$.
Since $X$ is non singular, there exists an effective $\mathbf{Q}$-cartier divisor $D$ such that
$\omega_{X/S}^{\otimes \alpha}\otimes f^*L= \mathcal{O}_X(D)$ for any $\alpha> 0$.

\begin{lemma} Let $D$ be an effective $\mathbf{Q}$-divisor on $X$. There exist a$\mathbf{Q}$-divisor $E$ and an effective Weil divisor $D^{\prime}$ on $X^{\prime\prime}$
$D= \mu^*D^{\prime}+E$ such that $E$ is a $\mu$-exceptional divisor, i.e.,
the $\mu$ image of the support of $E$ in $X^{\prime\prime}$ is of codimension $\leq 2$.
\end{lemma}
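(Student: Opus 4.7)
My plan is to decompose $D$ according to the behavior of its prime components under $\mu$: the non-$\mu$-exceptional components push down to define a candidate $D'$ on $X''$, and the residual $E := D - \mu^* D'$ should turn out to be the desired $\mu$-exceptional effective $\mathbf{Q}$-divisor on $X$.

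I would first establish that $\mu : X \to X''$ is a proper birational morphism. This is a consequence of the Stein-factorization setup: the composite $\phi = \nu\circ\mu : X \to Z = S\times\mathbf{P}^r$ is generically finite of degree $d = [R(X):R(Z)]$; since $X''$ is the integral closure of $Z$ in $R(X)$, the finite map $\nu$ has the same degree $d$, so $\mu$ has generic degree $1$. Combined with the connected fibres guaranteed by Stein factorization, $\mu$ is birational. Note also that $X$ is smooth and $X''$ is normal.

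Next I would write $D = D_h + D_v$ in terms of prime components: $D_h = \sum_i a_i D_i$ gathers those $D_i$ whose image $\mu(D_i)$ is a prime divisor on $X''$, while $D_v$ gathers those $D_i$ with $\mathrm{codim}_{X''}\mu(D_i)\geq 2$. For each $i$ in the first group put $D_i'' := \mu(D_i)$; since $\mu$ is birational, $D_i$ is the unique prime of $X$ dominating $D_i''$ and the induced map $D_i \to D_i''$ is birational of degree $1$. Define $D' := \mu_* D = \sum_i a_i D_i''$, which is an effective $\mathbf{Q}$-Weil divisor on $X''$. The key local point is that, $X''$ being normal, each $D_i''$ is Cartier at its generic point, so $\mu^* D'$ is defined there; a local computation using $\deg(\mu|_{D_i}) = 1$ gives $\mathrm{ord}_{D_i}(\mu^* D') = a_i$. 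Hence $E := D - \mu^* D'$ vanishes along every non-$\mu$-exceptional prime divisor, and its support is contained in the $\mu$-exceptional locus.

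The main obstacle I expect is showing that $E$ is in fact effective. At the generic point of a $\mu$-exceptional prime $E_j$, the multiplicity $\mathrm{ord}_{E_j}(\mu^* D')$ could a priori exceed the coefficient of $E_j$ in $D_v$. I would handle this either by interpreting $\mu^* D'$ as the strict transform of the Weil divisor $D'$ (which introduces no exceptional contribution, making $E = D_v$ automatically effective and $\mu$-exceptional), or by invoking the Zariski-Kawamata-type negativity of exceptional divisors for a proper birational morphism from a smooth variety to a normal one; the latter permits a correction to $D'$ by an effective Weil divisor supported away from the $D_i''$ so that the residual multiplicities along the $E_j$ become non-negative. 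Either route yields the asserted decomposition with $D'$ effective on $X''$ and $E$ effective, $\mu$-exceptional on $X$.
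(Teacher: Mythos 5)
You have correctly isolated the point that the paper's own proof does not address: the paper's argument consists solely of the observation that $\mu$ is birational, hence an isomorphism over the complement of a codimension $\geq 2$ subset of $X''$, and then asserts the decomposition, whereas the real issue is the effectivity of $E$. Unfortunately, neither of your two proposed repairs closes that gap. Reading $\mu^* D'$ as the strict transform makes $E = D_v$ effective by fiat, but it changes the meaning of the statement: in the sequel the paper substitutes $D' = \nu^*(D_1 - D_2)$ and uses $\mu^*\nu^* = \phi^*$, which is functoriality of the honest pullback and fails for strict transforms. The negativity appeal points in the wrong direction: for an effective $\mathbf{Q}$-Cartier $D'$ one has $\mu^* D' = (\text{strict transform of } D') + F$ with $F \geq 0$ exceptional, so with your decomposition $D = D_h + D_v$ and $D' = \mu_* D$ one gets $E = D_v - F$, and adding a further effective correction to $D'$ only increases $F$ and makes $E$ more negative, not less.

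In fact $D'$ is forced: applying $\mu_*$ to any identity $D = \mu^* D' + E$ with $E$ exceptional yields $D' = \mu_* D$, so there is no remaining freedom, and the statement as literally written is false for a general coefficientwise-effective $D$. Take $\mu : X \to X''$ the blowup of a point $p$ on a smooth surface, with exceptional curve $C$, and let $D$ be the strict transform of a curve through $p$; then $\mu^* \mu_* D = D + C$ and $E = -C$, which is not effective. Any correct argument must therefore use something beyond birationality of $\mu$ --- for instance the freedom to replace $D$ by a $\mathbf{Q}$-linearly equivalent divisor whose components are in general position with respect to the images of the $\mu$-exceptional divisors (so that $F = 0$), or a weakening of the conclusion to the inequality $D \leq \mu^* D' + E$, which is all that the later estimate on $\mu_* \omega_{X/S}^{\otimes m}$ actually consumes. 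There is also a prior issue that neither you nor the paper resolves at this stage: $X''$ is normal but not known to be $\mathbf{Q}$-factorial, so $\mu^* D'$ is not even defined for a Weil divisor $D'$ until $D'$ is shown to be $\mathbf{Q}$-Cartier; your remark that each component is Cartier at its generic point defines the pullback only over that locus, and the paper produces a $\mathbf{Q}$-Cartier representative only two lemmas later.
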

\begin{proof}
Since $\mu: X \to X^{\prime\prime}$ is birational, there exists a locus of codimension $\leq 2$
outside which the restriction of $\mu$ is an isomorphism. Hence
we have an effective $\mathbf{Q}$-divisor decomposition
$D= \mu^*D^{\prime}+E$ such that $E$ is a $\mu$-exceptional divisor.
\end{proof}

\begin{lemma}
There exist $\mathbf{Q}$-ample divisors $C_1$ and $C_2$ on $X^{\prime\prime}$ such that $D^{\prime} = C_1 -C_2$ in
$\mathrm{Z}^{(1)}(X^{\prime\prime}\otimes \mathbf{Q}$) up to $\mathbf{Q}$-linear equivalence.
\end{lemma}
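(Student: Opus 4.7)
The plan is to build $C_1$ and $C_2$ by starting from an ample divisor on $Z=S\times \mathbf{P}^{r}$, pulling it back to $X^{\prime\prime}$ along the finite morphism $\nu$, and then perturbing by the effective $\mathbf{Q}$-divisor $D^{\prime}$. First I would fix an ample divisor $H$ on $Z$, for instance $H = p^{*}H_{S}+q^{*}H_{\mathbf{P}^{r}}$ where $H_{S}$ is an ample divisor on $S$ and $H_{\mathbf{P}^{r}}$ is the hyperplane class. Since the diagram shows $\nu: X^{\prime\prime}\to Z$ is the finite part of the Stein factorization of the projective morphism $\phi$, the variety $X^{\prime\prime}$ is projective and normal, and because a finite pullback of an ample divisor is ample, $\nu^{*}H$ is an ample $\mathbf{Q}$-Cartier divisor on $X^{\prime\prime}$.

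Next, I would invoke the openness of the ample cone in $N^{1}(X^{\prime\prime})_{\mathbf{Q}}$: since $D^{\prime}$ is a $\mathbf{Q}$-Cartier $\mathbf{Q}$-divisor (this is needed already for the expression $\mu^{*}D^{\prime}$ in the preceding lemma to make sense), the class $\nu^{*}H+\tfrac{1}{n}D^{\prime}$ is ample for all sufficiently large $n$, and consequently so is $n\nu^{*}H+D^{\prime}$. Then I would simply set
\[
C_{1} \;=\; n\nu^{*}H + D^{\prime}, \qquad C_{2} \;=\; n\nu^{*}H.
\]
Both are $\mathbf{Q}$-ample by construction, and $C_{1}-C_{2}=D^{\prime}$ holds on the nose in $Z^{1}(X^{\prime\prime})\otimes\mathbf{Q}$, so the $\mathbf{Q}$-linear equivalence is trivial here.

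The main potential obstacle is ensuring that $D^{\prime}$ is genuinely $\mathbf{Q}$-Cartier on the normal-but-possibly-non-factorial variety $X^{\prime\prime}$. If one worries that $X^{\prime\prime}$ may fail to be $\mathbf{Q}$-factorial, one can either (i) replace $X^{\prime\prime}$ by a small $\mathbf{Q}$-factorialization (which exists in characteristic zero), after which the openness argument applies verbatim, or (ii) restrict the whole discussion to the $\mathbf{Q}$-Cartier locus of $X^{\prime\prime}$ and move $D^{\prime}$ into that locus by a general-position argument using $|mH|$. Aside from this $\mathbf{Q}$-Cartier issue the proof is purely formal, relying only on the openness of the ample cone and the stability of ampleness under finite pullback.
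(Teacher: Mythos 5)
Your proposal is correct and follows essentially the same route as the paper, whose entire proof is the one-line observation that one can choose a $\mathbf{Q}$-ample $C_1$ with $C_1 - D^{\prime}$ again $\mathbf{Q}$-ample (so $C_2 = C_1 - D^{\prime}$); your explicit choice $C_1 = n\nu^{*}H + D^{\prime}$, $C_2 = n\nu^{*}H$ is the same decomposition made concrete. Your added caveat that $D^{\prime}$ must be $\mathbf{Q}$-Cartier on the normal variety $X^{\prime\prime}$ for ampleness of the sum to make sense is a legitimate point that the paper's one-sentence proof passes over in silence.
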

\begin{proof}
There exists a $\mathbf{Q}$-ample divisor $C_1$ such that $C_1-D^{\prime}$ is $\mathbf{Q}$-ample, say, $C_2$.
\end{proof}

\begin{lemma}
Let $C$ be an ample divisor $C$ on $X^{\prime\prime}$. Then $\nu_{\ast}C$ is ample.
\end{lemma}
\begin{proof}
Since $C$ is ample, every intersection number $(C,\nu^{\ast}\ell_{\alpha})>0$ for
any curve $\ell_{\alpha}$ on $Z$ and for a pseudo-curve which is a limit of curves, $(C,\nu^{\ast}\ell_{\alpha})=(\nu_{\ast}C,\ell_{\alpha})>0$ by the projection formula. Hence $\nu_{\ast}C$ is ample.
\end{proof}
\begin{lemma}
There exist a $\mathbf{Q}$-ample divisor $D_1$ and  a $\mathbf{Q}$-ample divisor $D_2$ over $Z$ such that $C_1 = \nu^*D_1$ and $\nu^*D_2=C_2+\text{Q-effective divisor}$ up to
$\mathbf{Q}$-linear equivalence.
Furthermore,
Let $D_i = p^*A_i + a_iq^*H$,
where $A_i$ are $\mathbf{Q}$-divisors on $S$ for $i=1,2$, $H$ is a hyperplane section of $\mathbf{P}^r$ , $a_i$ is a rational number > 0 for $i=1,2$.
$C_2= \nu^*D_2 - \text{Q-effective divisor}=\nu^*p^*A_2+C_2^{\prime}$,
$C_2^{\prime} \leq a_2\nu^*q^*H$.
\end{lemma}
\begin{proof}
Since a general fibre of $X/S$ is irreducible and smooth, $\phi^*A_i$ are irreducible. On the other hand, $\phi^*H$ is reducible.
$C_2$ is a component of $\nu^*D_2=\nu^*(p^*A_2+a_2q^*H)$.
Hence $C_2$ is in the form $\nu^*p^*A_2+C_2^{\prime}$ and $C_2^{\prime} \leq a_2\nu^*q^*H$.
We refer to the next lemma.
\begin{lemma}[\cite{EGA} 4-3]
Let $f : X \to Y$ be a proper flat morphism of finite presentation.
The set of $y \in Y$ such that $X_y$ is smooth over $k(y)$ is open.
\end{lemma}

Since $f: X \to S$ is a fibre space of non singular varieties, i.e., a projective connected morphism,  a general fibre $X_s$ for a closed point $s\in S$ is a non singular variety.
Let $\D_S$ be a scheme representing a functor $T \to \D_{S/k}(T)$. Its components are quasi-projective. Let $\Gamma$ be the universal  relative effective divisor on $S\times \D_{S}/\D_{S}$.
Note that a fibre of a closed point of $S$ for $\Gamma \subset S\times \D_{S} \to S$ is an effective divisor on $S$. 
A general fibre of a closed point $s\in S$ for $f^{-1}\Gamma \subset X\times \D_{S} \to S\times \D_{S} \to \D_{S}$ is a non singular variety, i.e., smooth and irreducible over $k(s)$. 

\end{proof}

Note that $\mathrm{Pic}(Z) = \mathrm{Pic}(S)\times \mathrm{Pic}(\mathbf{P}^r)$ and that $\mathrm{Pic}(\mathbf{P}^r) \cong \mathbf{Z}$.
Let $p : Z \to S$ and $q: Z \to \mathbf{P}^r$. Let $\phi=\nu\circ\mu: X \to Z$.
Now put them together. We have
\begin{enumerate}
\item
$\omega_{X/S}=\mathcal{O}_{X}(D-f^*A_0)$
\item 
$D=\mu^*D^{\prime}+E$
\item
$ \nu^*(D_1-D_2)+a_2\nu^*q^*H \geq D^{\prime}= C_1-C_2 \geq \nu^*(D_1-D_2)$
\item
$D_i = p^*A_i + a_iq^*H$,
where $A_i$ is a $\mathbf{Q}$-divisor on $S$, $H$ is a hyperplane section of $\mathbf{P}^r$ , $a_1$ is a rational number and $i=1,2$.
\item
$\omega_{X/S}\supset \mathcal{O}_X(-f^*A_0+E+\phi^*(p^*A_1+a_1q^*H-p^*A_2-a_2q^*H)$
$=\mathcal{O}_X(E+{\phi}^*p^*A+a\phi^*q^*H)$, where
$A=-A_0+A_1-A_2$, $a=a_1-a_2$.
\item
$\mu_*\omega_{X/S}^{\otimes m}\supset \mu_*\mathcal{O}_X(m(E+\phi^*p^*A+a\phi^*q^*H))
=\mu_*\mathcal{O}_X(m(\phi^*p^*A+a\phi^*q^*H))$.
\end{enumerate}

Note that $\mu_*\mathcal{O}_X(mE)=\mu_*\mathcal{O}_X$, $\phi=\nu\circ\mu$.
We refer to theory of etale cohomology for the following lemma.
\begin{theorem}[Riemann existence Th.5.1 \cite{SGA}SGA1, Ex.XI \cite{SGA} SGA4 T.3]
Let $X$ be a $\mathbf{C}$-scheme locally of finite type, $X^{an}$ the analytic space associated to $X$. The functor $\Psi$ which associates $X^{\prime an}$ to every finite etale covering $X^{\prime}$ over $X$, is an equivalence of the category of finite etale covering of $X$ onto the category of finite etale covering of $X^{an}$.
\end{theorem}

\begin{proposition}[Prop4.3 Ex.VII SGA4 t.2 \cite{SGA}]
Let $F$ a sheaf in the sense of Zariski topology,
$F_{et}$ a sheaf over $X_{et}$ and a homomorphism of cohomological functor
\[
H^{\ast}(X,F) \to H^{\ast}(X_{et},F_{et})
\]
If $F$ is quasi-coherent, the homomorphism above is an isomorphism.

\end{proposition}

\begin{lemma}
$\phi_*\mathcal{O}_X \subset \oplus^{d} \mathcal{O}_Z$
\end{lemma}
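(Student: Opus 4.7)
The plan is to reduce to the finite part of $\phi = \nu\circ\mu$ and then produce the embedding via the trace pairing of the separable extension $R(X)/R(Z)$. Since $\mu:X\to X^{\prime\prime}$ is the birational part of the Stein factorization of $\phi$, by construction $\mu_*\mathcal{O}_X = \mathcal{O}_{X^{\prime\prime}}$, and therefore $\phi_*\mathcal{O}_X = \nu_*\mathcal{O}_{X^{\prime\prime}}$. Because $X^{\prime\prime}$ is the integral closure of $Z$ in the function field $R(X)$, the morphism $\nu$ is finite and $X^{\prime\prime}$ is normal, so $\nu_*\mathcal{O}_{X^{\prime\prime}}$ is a coherent $\mathcal{O}_Z$-subalgebra of the constant sheaf associated with $R(X)$. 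It suffices to embed this coherent sheaf into $\bigoplus^{d}\mathcal{O}_Z$.

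Next, because we work in characteristic zero, the finite extension $R(X)/R(Z)$ of degree $d$ is separable, so the trace pairing $(x,y)\mapsto \mathrm{tr}_{R(X)/R(Z)}(xy)$ is non-degenerate. Fix a basis $e_1,\ldots,e_d \in R(X)$ of $R(X)$ over $R(Z)$; after clearing denominators in their minimal polynomials, we may assume on each affine chart of $Z$ that every $e_i$ is integral over $\mathcal{O}_Z$. I then define the $\mathcal{O}_Z$-linear map
\[
\Psi:\nu_*\mathcal{O}_{X^{\prime\prime}} \longrightarrow \bigoplus_{i=1}^d \mathcal{O}_Z,\qquad b\longmapsto \bigl(\mathrm{tr}_{R(X)/R(Z)}(b\,e_i)\bigr)_{i=1}^d .
\]
Non-degeneracy of the trace form forces $\Psi$ to be injective. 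For well-definedness, take any local section $b$ of $\nu_*\mathcal{O}_{X^{\prime\prime}}$: then $b$ is integral over $\mathcal{O}_Z$, so each $be_i$ is integral, hence each $\mathrm{tr}(be_i)$ is an element of $R(Z)$ that is integral over $\mathcal{O}_Z$; by normality of $Z$ this trace lies in $\mathcal{O}_Z$. Thus $\Psi$ factors through $\bigoplus^{d}\mathcal{O}_Z$, producing the required inclusion.

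The main obstacle is the sheaf-theoretic globalization: on the projective variety $Z=S\times\mathbf{P}^r$ there need not be a single choice of basis $(e_i)$ that makes every $e_i$ a global integral element, so strictly speaking one performs the construction affine-locally on $Z$ and checks that the resulting local embeddings patch. Alternatively, one can view $\Psi$ as the canonical map coming from the trace $\mathrm{tr}:\nu_*\mathcal{O}_{X^{\prime\prime}}\to\mathcal{O}_Z$ combined with the pairing, which lands in $\mathcal{H}\!om_{\mathcal{O}_Z}(\nu_*\mathcal{O}_{X^{\prime\prime}},\mathcal{O}_Z)$ and, after trivializing over a generically free locus of rank $d$, identifies with an inclusion into $\bigoplus^{d}\mathcal{O}_Z$. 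In both formulations, the essential mechanism is the same --- separability of $R(X)/R(Z)$ in characteristic zero gives a non-degenerate trace, and normality of $Z$ converts integrality back to regularity.
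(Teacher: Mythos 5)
Your reduction to the finite part is fine and agrees with the paper's: $\mu_*\mathcal{O}_X=\mathcal{O}_{X^{\prime\prime}}$ because $\mu$ is proper birational onto the normal variety $X^{\prime\prime}$, so $\phi_*\mathcal{O}_X=\nu_*\mathcal{O}_{X^{\prime\prime}}$. From there your route (trace form of the separable degree-$d$ extension $R(X)/R(Z)$ plus normality of $Z$) is genuinely different from the paper's, which instead invokes the Weierstrass preparation lemma to get local freeness and asserts that $X^{\prime\prime}/Z$ is \'etale-locally a cyclic (Kummer) cover, writing $\nu_*\mathcal{O}_{X^{\prime\prime}}=\oplus_{i=0}^{d-1}\mathcal{O}_Z(-[\frac{i}{d}D])$ with $D$ effective, so that each summand sits inside $\mathcal{O}_Z$.

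However, your argument has a genuine gap exactly at the point you flag and then set aside: the globalization. On the projective variety $Z=S\times\mathbf{P}^r$, a basis $e_1,\dots,e_d$ of $R(X)$ over $R(Z)$ all of whose members are integral over $\mathcal{O}_Z$ on every affine chart would consist of global sections of $\nu_*\mathcal{O}_{X^{\prime\prime}}$, i.e.\ elements of $\Gamma(X^{\prime\prime},\mathcal{O}_{X^{\prime\prime}})=k$; for $d>1$ no such basis exists. So $\Psi$ is only defined chart by chart with chart-dependent bases, and the resulting local embeddings differ by transition matrices in $\mathrm{GL}_d(R(Z))$ that have no reason to preserve $\oplus^{d}\mathcal{O}_Z$; they need not patch. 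The gap is not cosmetic: for a general finite cover $\nu$ of a smooth projective variety the conclusion is simply false --- for a connected \'etale double cover of an elliptic curve one has $\nu_*\mathcal{O}=\mathcal{O}\oplus L$ with $L$ a nontrivial $2$-torsion line bundle, and $L$ admits no nonzero map to $\mathcal{O}$, hence no injection of $\nu_*\mathcal{O}$ into $\mathcal{O}^{\oplus 2}$ exists. Any correct proof must therefore use something specific to this $X^{\prime\prime}/Z$ (the positivity supplied by $Z=S\times\mathbf{P}^r$ and the chosen projection), which the trace construction alone does not see. Your fallback via $\mathcal{H}om_{\mathcal{O}_Z}(\nu_*\mathcal{O}_{X^{\prime\prime}},\mathcal{O}_Z)$ has the same defect: trivializing over a generically free locus only embeds the sheaf into the constant sheaf $R(Z)^{\oplus d}$, not into $\oplus^{d}\mathcal{O}_Z$.
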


\begin{proof}
We apply the next lemma to a finite morphism $X^{\prime\prime}/Z$.
We refer to Weierstrass preparation lemma.
\begin{lemma}
Let a convergent series $g \in k\{x_1, \cdots, x_n\}$ such that $g(0,\cdots, x_n) \neq 0$ and order $p$. Then $B = k\{x_1,\cdots, x_n\}/(g)$ is a free module over the ring $A=
k\{x_1,\cdots, x_{n-1}\}$ and has a basis of classes $(1, x_n, \cdots, x_n^{p-1})$ mod $(g)$.
\end{lemma}

The convergent series rings are strictly henzelian.
Hence $X^{\prime\prime}/Z$ is Kummer gerbe ,i.e., cyclic cover, with respect to the etale topology.
We have $\nu_*\mathcal{O}_{X^{\prime\prime}} = \oplus_{i=0}^{i=d-1} \mathcal{O}_Z(-[\frac{i}{d}D])$, where $D$ is an effective divisor with respect to
the etale topology.
Note, however, that the composition of the morphisms $X^{\prime\prime} \to Z$ and $Z \to \mathbf{P}^r$ is a connected morphism.

\end{proof}

\section{Cyclic Cover}

Let $L = \mathcal{O}_X(\phi^*q^*(bH))$. Here $b>0$ is taken sufficiently large.
Choose a non singular irreducible divisor $D$ such that $L^{\otimes n} = \mathcal{O}_X(D)$.
Take a cyclic cover $Y = \mathrm{Spec} \oplus_{0\leq i \leq n-1}L^{\otimes i}$ of $X$, which denotes $\tau: Y \to X$.
Let $D= \mathrm{div}a$. Here $a$ is a section of $\mathcal{O}_X(D)$.
Let Y be $\mathrm{Spec}\mathcal{O}_X[T]/(T^n-a)$.
Note that $Y$ is a non singular variety and $Y/S$ is a fibre space.
By adjunction formula, $\mathcal{O}_Y(K_Y) =\mathcal{O}_Y( \tau^*K_X\otimes \tau^* L^{\otimes(n-1)})$ since $K_Y + \tau^*L = \tau^*(K_X+D)$.
It is well known
$\tau_*\mathcal{O}_Y = \oplus_{0\leq i\leq n-1} (L^{-1})^{\otimes i}$.
Hence by projection formula,
$\tau_*\omega_{Y/S}^{\otimes m} = \tau_*\mathcal{O}_Y\otimes \omega_{X/S}^{\otimes m}\otimes L^{\otimes m(n-1)}$.
Let $g= f\circ \tau$.
We obtain
$g_*\omega_{Y/S}^{\otimes m} = \oplus_{0 \leq i \leq n-1} f_*(\omega_{X/S}^{\otimes m}\otimes L^{\otimes (m(n-1)-i)})$ and
$\det g_*\omega_{Y/S}^{\otimes m} =  \otimes_{0 \leq i \leq n-1} \det f_*(\omega_{X/S}^{\otimes m}\otimes L^{\otimes (m(n-1)-i)})$.

\begin{proposition}
$g_*\omega_{Y/S}^{\otimes m} = f_*(\tau_*\mathcal{O}_Y\otimes_{\mathcal{O}_X}\omega_{X/S}^{\otimes m}\otimes_{\mathcal{O}_X} L^{\otimes m(n-1)}) \subset$
$\oplus^{d} \oplus_{0 \leq i \leq n-1} \oplus \mathcal{O}_S^{r_i} \otimes \mathcal{O}_S(mA)$, where $r_i = \dim \Gamma(\mathbf{P}^r, \mathcal{O}((ma_1+b(m(n-1)-i))H)$.
\end{proposition}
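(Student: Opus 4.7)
The plan is to push forward step by step along the factorization $g = p\circ\nu\circ\mu\circ\tau$, applying the projection formula at each stage and invoking the three structural inputs already established: $\tau_*\mathcal{O}_Y=\bigoplus_{0\leq i\leq n-1}L^{-i}$, the $\mu$-exceptional absorption $\mu_*\mathcal{O}_X(mE)=\mu_*\mathcal{O}_X$, and the bound $\phi_*\mathcal{O}_X\subset \bigoplus^{d}\mathcal{O}_Z$ from the previous lemma.

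For the asserted equality, I would iterate $K_Y=\tau^*K_X+(n-1)\tau^*L$ to get $\omega_{Y/S}^{\otimes m}\cong \tau^*\bigl(\omega_{X/S}^{\otimes m}\otimes L^{\otimes m(n-1)}\bigr)$ (using $g=f\circ\tau$ and flatness of $\tau$ to identify the relative dualizing sheaves), then apply the projection formula for the finite flat morphism $\tau$ to obtain $\tau_*\omega_{Y/S}^{\otimes m}=\tau_*\mathcal{O}_Y\otimes \omega_{X/S}^{\otimes m}\otimes L^{\otimes m(n-1)}$, and take $f_*$. Substituting $\tau_*\mathcal{O}_Y=\bigoplus_{i=0}^{n-1} L^{-i}$ then breaks $g_*\omega_{Y/S}^{\otimes m}$ into the direct sum $\bigoplus_i f_*\bigl(\omega_{X/S}^{\otimes m}\otimes L^{\otimes(m(n-1)-i)}\bigr)$.

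Next I would plug in the formula $\omega_{X/S}^{\otimes m}\otimes L^{\otimes(m(n-1)-i)}\cong \mathcal{O}_X\bigl(mE+\phi^*p^*(mA)+c_i\,\phi^*q^*H\bigr)$ with $c_i:=ma+b(m(n-1)-i)$ and push forward along $\mu$. Since $\phi=\nu\circ\mu$, the projection formula gives $\mu_*\bigl(\mathcal{O}_X(mE)\otimes\mu^*\nu^*\mathcal{O}_Z(p^*(mA)+c_iq^*H)\bigr)=\nu^*\mathcal{O}_Z(p^*(mA)+c_iq^*H)\otimes \mu_*\mathcal{O}_X(mE)$, and exceptionality of $E$ collapses the last factor to $\mu_*\mathcal{O}_X$. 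Applying $\nu_*$ and the projection formula once more produces $\mathcal{O}_Z(p^*(mA)+c_iq^*H)\otimes \phi_*\mathcal{O}_X$, which by the lemma sits inside $\bigoplus^{d}\mathcal{O}_Z\bigl(p^*(mA)+c_iq^*H\bigr)$.

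Finally I would push forward by $p\colon Z=S\times\mathbf{P}^r\to S$. Using $\mathcal{O}_Z(p^*(mA)+c_iq^*H)\cong p^*\mathcal{O}_S(mA)\otimes q^*\mathcal{O}_{\mathbf{P}^r}(c_iH)$, the projection formula and flat base change along the projection $p$ yield $p_*\mathcal{O}_Z(p^*(mA)+c_iq^*H)=\mathcal{O}_S(mA)\otimes \Gamma(\mathbf{P}^r,\mathcal{O}(c_iH))\otimes\mathcal{O}_S=\mathcal{O}_S^{r_i}\otimes\mathcal{O}_S(mA)$. Summing the resulting inclusions over $0\leq i\leq n-1$ is exactly the right-hand side of the claim. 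The bookkeeping is routine; the one step to check carefully is that the exceptional-divisor absorption $\mu_*\mathcal{O}_X(mE)=\mu_*\mathcal{O}_X$ holds for \emph{every} positive $m$ (so that the same argument works uniformly as $m$ varies), which is where the normality of $X''$ together with the codimension-$\geq 2$ image of $E$ enter; once that is in place, the proposition is a direct unwinding of the projection formula.
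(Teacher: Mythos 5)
Your proposal follows essentially the same route as the paper: decompose $g_*\omega_{Y/S}^{\otimes m}$ via adjunction and $\tau_*\mathcal{O}_Y=\bigoplus_i L^{-i}$, absorb $mE$ using $\mu_*\mathcal{O}_X(mE)=\mu_*\mathcal{O}_X$, bound $\phi_*\mathcal{O}_X$ by $\bigoplus^d\mathcal{O}_Z$, and finish by pushing forward along $p$ with $p_*\mathcal{O}_Z(c_iq^*H)=\mathcal{O}_S^{r_i}$. The only cosmetic difference is that the paper keeps a round-up $\mathcal{O}_S(\lceil mA\rceil)$ to account for $A$ being a $\mathbf{Q}$-divisor, and your explicit remark that the exceptional absorption must hold uniformly in $m$ is a point the paper leaves implicit.
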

\begin{proof} From the argument above, we have
$g_*\omega_{Y/S}^{\otimes m} \subset \oplus_{0 \leq i \leq n-1} f_*(\omega_{X/S}^{\otimes m}\otimes L^{\otimes (m(n-1)-i)}) \subset$
$\oplus_{0 \leq i \leq n-1} p_*(\phi_* (\mathcal{O}_X(mE))\otimes \mathcal{O}_Z(ma_1q^*H+mp^*A+ b(m(n-1)-i)q^*H))$, which is an injection into the following sheaf since $\phi_*\mathcal{O}_X(mE) = \nu_*\mu_*\mathcal{O}_X(mE) = \nu_*\mu_*\mathcal{O}_X =\phi_*\mathcal{O}_X \subset \oplus^d \mathcal{O}_Z$.
$$\oplus^{d} \oplus_{0 \leq i \leq n-1} p_*\mathcal{O}_Z((ma+b(m(n-1)-i))q^*H)\otimes \mathcal{O}_S(\lceil{mA}\rceil) =$$
$\oplus^{d} \oplus_{0 \leq i \leq n-1} \oplus \mathcal{O}_S^{r_i} \otimes \mathcal{O}_S(\lceil{mA}\rceil)$.
Here $r_i = \dim \Gamma(\mathbf{P}^r, \mathcal{O}((ma+b(m(n-1)-i))H)$.
 Note that $\lceil{A}\rceil = A +\{-A\}$.
See the following lemma.
\begin{lemma}
$p_*\mathcal{O}_Z((ma_1+b(m(n-1)-i))q^*H) = \oplus \mathcal{O}_S^{r_i}$,
where $r_i = \dim \Gamma(\mathbf{P}^r, \mathcal{O}((ma_1+b(m(n-1)-i))H)$.
\end{lemma}
\end{proof}

We may assume that if $b$ is taken sufficiently large, the generic geometric fibre of $Y/S$ is of general type, if necessary, the canonical invertible sheaf over the generic geometric fibre of $Y/S$ is abundant with $R^ig_*\omega_{Y/S}^{\otimes m} = 0$ for $i >0$.
The composite map $\tau\circ \nu \circ \mu : Y \to Z$ is generically finite and the invertible sheaf $q^*H$ is relatively ample with respect to $p : Z \to S$.
 
\begin{lemma} 
Let $S$ be a non singular variety. Let $L$ be an invertible sheaf and $E'$ and $E$ locally free sheaves of finite rank over $S$.
Given the exact sequence $ 0 \to E' \to E \otimes L $ and $E \cong \mathcal{O}^n$ for some $n > 0$, then
$\kappa(L^{\otimes r}\otimes (\det E')^{-1}) \geq 0$, where $r = \mathrm{rank} E'$.
\end{lemma}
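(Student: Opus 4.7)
The plan is to apply the top exterior power to the given injection and then exploit the triviality of $E$. Set $r=\mathrm{rank}\,E'$ and apply $\bigwedge^{r}$ to $E'\hookrightarrow E\otimes L$. Using the standard isomorphism $\bigwedge^{r}(E\otimes L)\cong\bigl(\bigwedge^{r}E\bigr)\otimes L^{\otimes r}$ (valid because $L$ has rank one) together with the fact that the local rings of the non-singular variety $S$ are regular, hence integral domains, the induced map
\[
\det E' \;\longrightarrow\; \bigl(\textstyle\bigwedge^{r}E\bigr)\otimes L^{\otimes r}
\]
is again injective: on each stalk this is the classical fact that an injection of free modules of the same rank over a domain has non-zero determinant.

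Next, I would substitute $E\cong\mathcal{O}_S^{n}$, so that $\bigwedge^{r}E\cong\mathcal{O}_S^{\binom{n}{r}}$, and tensor the resulting injection $\det E'\hookrightarrow\bigoplus^{\binom{n}{r}}L^{\otimes r}$ with the invertible sheaf $(\det E')^{-1}$ (which preserves injectivity) to obtain
\[
\mathcal{O}_S \;\hookrightarrow\; \bigoplus^{\binom{n}{r}}\bigl(L^{\otimes r}\otimes(\det E')^{-1}\bigr).
\]
Applying the left-exact functor $\Gamma(S,-)$, the nonzero section $1\in\Gamma(S,\mathcal{O}_S)$ is sent to a nonzero element of $\bigoplus\Gamma\bigl(S,L^{\otimes r}\otimes(\det E')^{-1}\bigr)$, so at least one of its components is a nonzero global section of $L^{\otimes r}\otimes(\det E')^{-1}$. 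This yields $\kappa\bigl(L^{\otimes r}\otimes(\det E')^{-1}\bigr)\geq 0$, as required.

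The only step that demands genuine care is the preservation of injectivity under $\bigwedge^{r}$, which is where the non-singularity of $S$ (regularity of its local rings) is essential; the remainder is formal manipulation with the determinant of the embedding and the triviality hypothesis $E\cong\mathcal{O}_S^{n}$.
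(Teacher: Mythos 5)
Your proof is correct, but it takes a genuinely different and more direct route than the paper's. You apply $\bigwedge^{r}$ directly to the inclusion $E'\hookrightarrow E\otimes L$, use $\bigwedge^{r}(E\otimes L)\cong(\bigwedge^{r}E)\otimes L^{\otimes r}$ together with $E\cong\mathcal{O}_S^{n}$, and after twisting by $(\det E')^{-1}$ read off a nonzero global section of $L^{\otimes r}\otimes(\det E')^{-1}$. The paper instead dualizes: it forms $(E\otimes L)^{*}\to(E')^{*}$, names the image $F$ and the kernel $K$, passes to an open subset $S^{o}$ whose complement has codimension $\geq 2$ where these sheaves are locally free, observes that $F\otimes L$ is globally generated as a quotient of $E^{*}\cong\mathcal{O}^{n}$ and that $\det(F\otimes L)\subset\det((E')^{*}\otimes L)=L^{\otimes r}\otimes(\det E')^{-1}$, and concludes from there. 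Your version buys simplicity: no dualization, no restriction to $S^{o}$, no bookkeeping about torsion-freeness of $F$ and $K$; everything happens on all of $S$ in one step. The paper's dualize-and-take-image pattern is essentially the same template reused later in the proposition on $\max_{m}\kappa(\det f_{*}\omega_{X/S}^{\otimes m})$, which is presumably why it is phrased that way here. One small imprecision in your write-up: the injectivity of $\det E'\to\bigwedge^{r}(E\otimes L)$ is not literally the fact about an injection of free modules of \emph{the same rank} having nonzero determinant (the ranks here are $r$ and $n$); the clean justification is that the map is injective at the generic point of $S$ (an injection of vector spaces induces an injection on exterior powers, so some maximal minor is nonzero) and $\det E'$ is torsion-free on the integral scheme $S$, so the kernel must vanish. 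That is a cosmetic fix, not a gap.
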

\begin{proof}
Take the dual and we have a homomorphism $  (E \otimes L)^* \to (E')^* $ and let the image be $F$ and $K$ the kernel. $F$ and $K$ are torsion free and locally free outside a closed subset of codimension $\geq 2$, which we denote $S^o$. $F$ is of the same rank as $E'$. 
We have the exact sequences $ 0 \to (E \otimes L)^* \to F \to 0 $ and
$ 0 \to  K \otimes L \to (E)^* \to F \otimes L \to 0$ over $S^o$.
Thus $ F \otimes L$ is globally generated and $F \otimes L \to (E')^*\otimes L$ is an isomorphism over $S^o$.
Hence $\det (F \otimes L) \subset \det((E')^*\otimes L)$.
Note that $\det((E')^*\otimes L) = L^{\otimes r}\otimes (\det E')^{-1}$, where $r = \mathrm{rank} E'$.
Therefore $\kappa(L^{\otimes r}\otimes (\det E')^{-1}) \geq 0$.
\end{proof}

\begin{proposition}
$\kappa(\mathcal{O}_S(\lceil{mA}\rceil)) \geq \kappa(\det g_*\omega_{Y/S}^{\otimes m})$
\end{proposition}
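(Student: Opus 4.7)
The plan is to invoke the preceding lemma directly on the injection produced by the previous proposition, which exhibits
\[ g_*\omega_{Y/S}^{\otimes m}\;\hookrightarrow\; \oplus^{d}\bigoplus_{0\leq i\leq n-1}\mathcal{O}_S^{r_i}\otimes \mathcal{O}_S(\lceil mA\rceil). \]
This is precisely of the form $0\to E'\to E\otimes L$ required by the preceding lemma, with $E' = g_*\omega_{Y/S}^{\otimes m}$, $L = \mathcal{O}_S(\lceil mA\rceil)$, and $E$ the free $\mathcal{O}_S$-module of total rank $d\sum_i r_i$. Setting $r=\rank E'$, the lemma yields at once $\kappa(L^{\otimes r}\otimes (\det E')^{-1})\geq 0$.

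From there I would use the elementary property that, for $\mathbf{Q}$-invertible sheaves $M$ and $N$ on a non singular projective variety, $\kappa(M\otimes N^{-1})\geq 0$ implies $\kappa(M)\geq \kappa(N)$: choosing a non zero section $s$ of $(M\otimes N^{-1})^{\otimes k_0}$ for some sufficiently divisible $k_0$, multiplication by the appropriate power of $s$ defines an injection $\Gamma(S,N^{\otimes k})\hookrightarrow \Gamma(S,M^{\otimes k})$ whose associated rational maps identify the Iitaka image of $N$ with a subvariety of that of $M$. Applying this with $M=L^{\otimes r}$ and $N=\det g_*\omega_{Y/S}^{\otimes m}$, and then using that Iitaka dimension is invariant under positive tensor powers so $\kappa(L^{\otimes r})=\kappa(L)$, I obtain $\kappa(\mathcal{O}_S(\lceil mA\rceil))\geq \kappa(\det g_*\omega_{Y/S}^{\otimes m})$, which is the claim.

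The one point requiring care — and in my view the main, albeit minor, obstacle — is the meaning of $\det g_*\omega_{Y/S}^{\otimes m}$, since the preceding lemma literally assumes $E'$ is locally free, whereas $g_*\omega_{Y/S}^{\otimes m}$ is only torsion free in general. It is, however, locally free outside a closed set $S\setminus S^o$ of codimension $\geq 2$; I interpret $\det E'$ as the reflexive hull of $\bigwedge^r E'$, which is an invertible sheaf on the non singular $S$ and agrees with the naive determinant over $S^o$. The preceding lemma is then applied over $S^o$, and the resulting effectivity of $L^{\otimes r}\otimes (\det E')^{-1}$ on $S^o$ extends uniquely to $S$ because both sides are reflexive sheaves of rank one on a non singular variety. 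With this interpretation in place the entire argument goes through verbatim; the substantive work has all been done in the preceding proposition and lemma, and the present proposition is essentially a two-line application of them.
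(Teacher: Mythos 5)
Your proposal is correct and follows the paper's own route exactly: the paper likewise applies the preceding lemma to the injection $g_*\omega_{Y/S}^{\otimes m} \subset \oplus^{d}\oplus_{0\leq i\leq n-1}\mathcal{O}_S^{r_i}\otimes\mathcal{O}_S(\lceil mA\rceil)$ with $E'=g_*\omega_{Y/S}^{\otimes m}$ and $L=\mathcal{O}_S(\lceil mA\rceil)$, and your added details (the passage from $\kappa(L^{\otimes r}\otimes(\det E')^{-1})\geq 0$ to the stated inequality, and the treatment of $\det$ of a torsion-free sheaf via the locus $S^o$ of codimension $\geq 2$) merely make explicit what the paper leaves implicit.
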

\begin{proof}
Apply the lemma above to the following formula,
$g_*\omega_{Y/S}^{\otimes m} \subset \oplus^{d} \oplus_{0 \leq i \leq n-1} \oplus \mathcal{O}_S^{r_i} \otimes \mathcal{O}_S(mA)$, where $r_i = \dim \Gamma(\mathbf{P}^r, \mathcal{O}((ma_1+b(m(n-1)-i))H)$. 
\end{proof}

Consider the case when $m=1$. Let $D$ be a classifying space for a variation of Hodge structure and let $\Gamma$ be the monodromy group, which is a subgroup of the arithmetic group of all linear automorphism group of $H^{\dim X_s}(X_s,\mathbf{C})$ which preserve a certain condition. Let $\Phi: S \to \Gamma\setminus D$ be a holomorphic period mapping satisfying the Griffiths transeversality relation. A period mapping $\Phi$ gives rise to a variation of Hodge structure by pulling back the universal family over $\Gamma\setminus D$. Since the generic geometric fibre of $Y/S$ is of general type and $\mathrm{var}(Y/S) \geq \dim S$, the period mapping $\Phi$ is a finite to one mapping. Hence we obtain $\kappa(A)= \dim S$.

%%%%%%%%%%%%%%%%%%%%%%%%%%%%%%%%%%%%%%%%%%%%%%%%%%%%%%%%%%%%%%%%%%%%%%%%%%%%%%%%%%%%%%%%%%%%%%%%%%%%%%%%%%%%%%%%%%%
 Kawamata proved the next theorem under the condition that the generic geometric fibre has the abundant canonical invertible sheaf and Koll\'ar proved it when the generic geometric fibre is of general type.

\begin{lemma}
$\kappa(\det g_*\omega_{Y/S}^{\otimes m}) \geq \mathrm{var}(Y/S)$
\end{lemma}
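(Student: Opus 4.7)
The plan is to invoke the theorems of Kollár (when the generic geometric fibre $Y_{\bar\eta}$ is of general type) and Kawamata (when $\omega_{Y_{\bar\eta}}$ is abundant); by the construction of $Y/S$ in the preceding section, one of these hypotheses holds, and the relevant vanishing $R^i g_*\omega_{Y/S}^{\otimes m}=0$ for $i>0$ has already been arranged.

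First I would reduce to the situation $\mathrm{var}(Y/S) = \dim S$. By definition of Viehweg dimension, there is a generically finite cover $S' \to S$ and a birational identification $Y \times_S S' \sim S' \times_T Y_0$ with $\dim T = \mathrm{var}(Y/S)$. Applying Viehweg's lemma (stated earlier in the excerpt for $X/S$, but equally valid for the cyclic cover $Y/S$) to this factorisation yields $\kappa(\det(g_0)_*\omega_{Y_0/T}^{\otimes m}) \leq \kappa(\det g_*\omega_{Y/S}^{\otimes m})$ for suitable $m$. Hence it suffices to establish the lemma when the base already equals the Viehweg target, i.e.\ $\mathrm{var}(Y_0/T) = \dim T$, at which point the claim becomes bigness of $\det (g_0)_*\omega_{Y_0/T}^{\otimes m}$.

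Next I would invoke Kollár's theorem in the general-type case: weak positivity of $(g_0)_*\omega_{Y_0/T}^{\otimes m}$ (Viehweg) combined with generic finiteness of the moduli map $T \dashrightarrow \overline{\mathcal{M}}_h$ --- which is precisely the condition $\mathrm{var}(Y_0/T) = \dim T$ --- forces $\det (g_0)_*\omega_{Y_0/T}^{\otimes m}$ to be big, because it descends essentially to an ample $\mathbf{Q}$-class on the moduli stack and is pulled back along a generically finite morphism. In the abundance case, Kawamata's argument applies the relative Iitaka fibration of $\omega_{Y_{0,\bar\eta}}$ fibrewise to reduce to the general-type conclusion on the Iitaka base, the relative period mapping $T \to \Gamma\setminus D$ playing the role of the moduli map.

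The principal obstacle is the Hodge-theoretic input underlying both theorems: one must show that when the period map associated with the polarized variation of Hodge structure on $R^{\dim Y_{0,\bar\eta}}(g_0)_*\mathbf{C}$ is generically finite, the determinant of the relevant Hodge sub-bundle is big. This rests on Griffiths' curvature estimates for the Hodge metric and the convexity/negativity properties of the classifying space $\Gamma \setminus D$; the abundance case brings the additional technical subtlety of making the relative Iitaka fibration and its associated variation of Hodge structure well-behaved in families, which Kawamata addresses via semistable reduction and a careful additivity analysis along the Iitaka base.
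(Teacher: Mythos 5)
Your proposal takes essentially the same route as the paper: the paper states this lemma without proof, attributing it directly to Koll\'ar (general type case) and Kawamata (abundant canonical sheaf case), the hypotheses being satisfied because $Y$ was constructed as a cyclic cover along a sufficiently ample horizontal divisor precisely so that $Y_{\bar{\eta}}$ is of general type (or has abundant canonical sheaf) with the required vanishing. Your additional sketch of the internals of those theorems (reduction to $\mathrm{var}=\dim$ via Viehweg's lemma, weak positivity plus generic finiteness of the moduli/period map, Griffiths curvature) is reasonable but goes beyond what the paper itself supplies.
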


%%%%%%%%%%%%%%%%%%%%%%%%%%%%%%%%%%%%%%%%%%%%%%%%%%%%%%%%%%%%%%%%%%%%%%%%%%%%%%%%%%%%%%%%%%%%%%%%%%%%%%%%%%%%%%%%%%%%

\begin{lemma}
Given the exact sequence $0 \to E^{\prime} \to E \to E^{\prime\prime} \to 0$.
If $E$ is weakly positive and if $\det E^{\prime}$ is big, then $\det E$ is big
\end{lemma}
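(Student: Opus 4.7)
The plan is to reduce the statement to the multiplicative identity
\[
\det E \;\cong\; \det E' \otimes \det E''
\]
induced by the exact sequence. Given that $\det E'$ is big, it is enough to prove that $\det E''$ is pseudo-effective: by Kodaira's lemma one may write $\det E' \sim H + F$ with $H$ ample and $F$ effective, and then $\det E \sim H + (F + \det E'')$ expresses $\det E$ as the sum of an ample divisor and a pseudo-effective divisor, hence big.

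To see that $\det E''$ is pseudo-effective I would produce it as a quotient of an exterior power of $E$. Setting $s=\rank E''$, the surjection $E \twoheadrightarrow E''$ induces a surjection
\[
\wedge^{s} E \twoheadrightarrow \wedge^{s} E'' = \det E''.
\]
Weak positivity of $E$ propagates to $\wedge^{s} E$ since in characteristic zero the exterior power is a direct summand of $E^{\otimes s}$, and both tensor powers and direct summands of weakly positive sheaves remain weakly positive. A quotient of a weakly positive sheaf is weakly positive directly from the definition: a generically generating family of sections on the source maps to a generically generating family on the target. Hence $\det E''$ is a weakly positive invertible sheaf, which for line bundles coincides with pseudo-effectivity, and the argument closes.

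The main obstacle is ensuring that each of the functorial properties of weak positivity invoked here -- passage to tensor and exterior powers, to direct summands, and to quotients -- is available in the setting at hand. These are standard in Viehweg's theory, but because they presuppose torsion-freeness and behave delicately in codimension $\geq 2$, one typically first restricts to the maximal open $S^{\circ}\subset S$ on which all the sheaves in sight are locally free, applies the functorial properties there, and then extends back using that $\mathrm{Pic}(S)\to\mathrm{Pic}(S^{\circ})$ is an isomorphism when $S\setminus S^{\circ}$ has codimension at least two and $S$ is smooth.
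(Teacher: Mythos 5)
Your proposal is correct and follows essentially the same route as the paper: both arguments rest on the identity $\det E \cong \det E' \otimes \det E''$, deduce that $\det E''$ is weakly positive (pseudo-effective) because quotients of weakly positive sheaves are weakly positive, and conclude that big tensor pseudo-effective is big. You merely supply the details the paper leaves implicit, namely the passage through $\wedge^{s}E \twoheadrightarrow \det E''$ and the restriction to the locally free locus of codimension-two complement.
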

\begin{proof}
Since the quotient of a weakly positive sheaf is weakly positive, the exact sequence $0 \to E^{\prime} \to E \to E^{\prime\prime} \to 0$, where $E$ is weakly positive and $\det E^{\prime}$ is big,
gives the conclusion that $\det E = \det E^{\prime} \otimes \det E^{\prime\prime}$ is big.
\end{proof}

\begin{proposition}
If $\mathrm{var}(Y/S) \geq \mathrm{var}(X/S)=\dim S$,
$\max_{m>0}\kappa(\det f_*\omega_{X/S}^{\otimes m}) \geq \dim S$.
\end{proposition}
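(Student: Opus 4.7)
The plan is to emulate the argument of the preceding proposition, now applied directly to $f_*\omega_{X/S}^{\otimes \alpha m}$ in place of $g_*\omega_{Y/S}^{\otimes m}$, and then to combine the resulting inclusion with the weak positivity of $f_*\omega_{X/S}^{\otimes m}$ via the last lemma of the preceding section. First, combining the preceding proposition with the Kollár--Kawamata lemma yields
\[
\kappa(\mathcal{O}_S(\lceil mA\rceil)) \;\geq\; \kappa(\det g_*\omega_{Y/S}^{\otimes m}) \;\geq\; \mathrm{var}(Y/S) \;\geq\; \mathrm{var}(X/S) \;=\; \dim S,
\]
so $\mathcal{O}_S(\lceil mA\rceil)$ is big on $S$ for some sufficiently divisible $m>0$.

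Next, I would repeat the projection-formula computation of the previous proof, but with $f_*\omega_{X/S}^{\otimes \alpha m}$ itself in place of the $L_0$-twisted terms (i.e., the ``$i=0$'' contribution, without the cyclic-cover twist). Starting from $\omega_{X/S}^{\otimes \alpha m} = \mathcal{O}_X(mE + mf^*A + ma\phi^*q^*H)$ and using both $\mu_*\mathcal{O}_X(mE) = \mu_*\mathcal{O}_X$ and the inclusion $\phi_*\mathcal{O}_X \subset \oplus^d \mathcal{O}_Z$ together with projection formula, one obtains
\[
f_*\omega_{X/S}^{\otimes \alpha m} \;\hookrightarrow\; \bigoplus^{ds'}\mathcal{O}_S(\lceil mA\rceil),\qquad s' = \dim\Gamma(\mathbf{P}^r, \mathcal{O}(maH)).
\]
From this inclusion, the rank-determinant lemma delivers $\det f_*\omega_{X/S}^{\otimes \alpha m} = \mathcal{O}_S(\rho\lceil mA\rceil - N)$ with $N$ an effective divisor and $\rho = \mathrm{rank}\, f_*\omega_{X/S}^{\otimes \alpha m}$, exhibiting $\det f_*\omega_{X/S}^{\otimes \alpha m}$ as differing from a big line bundle by an effective defect.

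Finally, Viehweg's theorem gives that $f_*\omega_{X/S}^{\otimes m}$ is weakly positive, so in particular $\det f_*\omega_{X/S}^{\otimes m}$ is pseudo-effective. I would apply the final lemma (weak positivity of $E$ plus bigness of $\det E'$ implies bigness of $\det E$) to an exact sequence that realises $f_*\omega_{X/S}^{\otimes m}$ as the weakly positive ambient term $E$ and contains a subsheaf $E'$ of big determinant; this subsheaf should come from a trace-type map $g_*\omega_{Y/S}^{\otimes m} \to f_*\omega_{X/S}^{\otimes m}$ together with the Kollár--Kawamata bigness of $\det g_*\omega_{Y/S}^{\otimes m}$. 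The lemma would then conclude that $\det f_*\omega_{X/S}^{\otimes \alpha m}$ is big, so $\max_{m>0}\kappa(\det f_*\omega_{X/S}^{\otimes m}) \geq \dim S$.

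The hard part will be this last step. The inclusion from Step 2 realises $f_*\omega_{X/S}^{\otimes \alpha m}$ as a subsheaf of a big ambient sheaf, which is the opposite direction from what the final lemma demands; one needs instead a natural big-determinant subsheaf of $f_*\omega_{X/S}^{\otimes m}$ itself. Producing it requires carefully decomposing $g_*\omega_{Y/S}^{\otimes m}$ via the splitting $\tau_*\mathcal{O}_Y = \oplus L_0^{-i}$, tracking which summand pushes down to give a subsheaf of $f_*\omega_{X/S}^{\otimes m}$, and then transferring the Kollár--Kawamata bigness of $\det g_*\omega_{Y/S}^{\otimes m}$ through this extraction; the $\mu$-exceptional defect $E$ and the ampleness of $q^*H$ over $S$ must be handled with care so that the effective correction $N$ is absorbed and bigness (rather than merely pseudo-effectivity) of $\det f_*\omega_{X/S}^{\otimes m}$ survives.
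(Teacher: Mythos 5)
Your first step (chaining $\kappa(\mathcal{O}_S(\lceil mA\rceil)) \geq \kappa(\det g_*\omega_{Y/S}^{\otimes m}) \geq \mathrm{var}(Y/S) \geq \dim S$ to get bigness of $A$) and your identification of the final lemma (weak positivity of $E$ plus bigness of $\det E'$ implies bigness of $\det E$) both match the paper. But the crux of the argument is precisely the step you defer as ``the hard part,'' and it is left genuinely unproved: you never produce a subsheaf of $f_*\omega_{X/S}^{\otimes m}$ with big determinant. Your Step 2 only yields an injection $f_*\omega_{X/S}^{\otimes \alpha m} \hookrightarrow \oplus \mathcal{O}_S(\lceil mA\rceil)$, which as you note bounds $\det f_*\omega_{X/S}^{\otimes \alpha m}$ from \emph{above} by a big bundle minus an effective divisor; that gives nothing, since big minus effective can have any Iitaka dimension. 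The paper closes this gap much more directly: from the decomposition $\omega_{X/S}=\mathcal{O}_X(E+\phi^*p^*A+a\phi^*q^*H)$ with $E$ effective, one gets an injection of the \emph{line bundle} $\mathcal{O}_S([mA])$ into $f_*\omega_{X/S}^{\otimes m}$, i.e. an exact sequence $0 \to \mathcal{O}_S([mA]) \to f_*\omega_{X/S}^{\otimes m}$. Taking $E'=\mathcal{O}_S([mA])$ (rank one, so $\det E'=\mathcal{O}_S([mA])$ is big for the infinitely many $m$ with $[mA]=mA$) and $E=f_*\omega_{X/S}^{\otimes m}$ weakly positive, the final lemma gives $\det f_*\omega_{X/S}^{\otimes m}=\mathcal{O}_S([mA])\otimes\det G$ big, with $G$ the weakly positive cokernel. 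No extraction from $g_*\omega_{Y/S}^{\otimes m}$ is needed.

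Your proposed substitute --- a trace-type map $g_*\omega_{Y/S}^{\otimes m} \to f_*\omega_{X/S}^{\otimes m}$ transferring the Koll\'ar--Kawamata bigness --- would not work as stated. The splitting $\tau_*\omega_{Y/S}^{\otimes m}=\oplus_{0\leq i\leq n-1}\,\omega_{X/S}^{\otimes m}\otimes L^{\otimes(m(n-1)-i)}$ contains an untwisted summand $\omega_{X/S}^{\otimes m}$ only when $m=1$, so for general $m$ the sheaf $f_*\omega_{X/S}^{\otimes m}$ is not a direct summand of $g_*\omega_{Y/S}^{\otimes m}$; and even where it is, bigness of the determinant of a direct sum does not descend to the determinant of one summand. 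So the missing step is not a technicality to be ``handled with care'' but the essential content of the proposition, and it requires the line-subbundle $\mathcal{O}_S([mA])\hookrightarrow f_*\omega_{X/S}^{\otimes m}$ (or an equivalent device) that your proposal does not supply.
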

\begin{proof} There exists an exact sequence
$0 \to \mathcal{O}_S([mA]) \to f_*\omega_{X/S}^{\otimes m}$ over $S$. Take the dual to get the homomorphism
$(f_*\omega_{X/S}^{\otimes m})^* \to \mathcal{O}_S(-[mA])$. Let the image denote $F$ and let the kernel be $K$. They and $f_*\omega_{X/S}^{\otimes m}$ are torsion free and hence there exists an open sebset
$S^o$ such that $\dim S-\dim(S \setminus S^o) \geq 2$ and that $K$,  $F$ and $f_*\omega_{X/S}^{\otimes m}$ are locally free.
Note that $F \subset \mathcal{O}_S(-[mA])$ and so $\mathcal{O}_S([mA]) \to F^*$ is a non zero injective map to a torsion free of rank one and that $\mathcal{O}_S([mA])$ is big for infinitely many $m$ since there exist infinitely many $m$ such that $\lceil mA \rceil = mA = [mA]$.

Hence we have the exact sequence $0 \to K \to (f_*\omega_{X/S}^{\otimes m})^* \to F \to 0$ of locally free sheaves of finite rank over $S^o$.
Thus we have the exact sequence $0 \to F^* \to f_*\omega_{X/S}^{\otimes m} \to K^* \to 0$ of locally free sheaves of finite rank over $S^o$.
Let $E'= F^*$, $E = f_*\omega_{X/S}^{\otimes m}$ and $E" =  K^*$. Consider sheaves over $S^o$.
Since the quotient of a weakly positive sheaf is weakly positive, the exact sequence $0 \to E' \to E \to E" \to 0$, where $E$ is weakly positive and $\det E'$ is big,
gives the conclusion that $\det E = \det E' \otimes \det E"$ is big.

Recall $\mathcal{O}_S([mA]) \subset f_*\omega_{X/S}^{\otimes m}$, we have $\det f_*\omega_{X/S}^{\otimes m} = \mathcal{O}_S([mA]) \otimes \det G$, where $G$ is the cokernel of the monomorphism $\mathcal{O}_S([mA]) \subset f_*\omega_{X/S}^{\otimes m}$. $\det G$ is weakly positive and $\mathcal{O}_S([mA])$ is big for infinitely many $m$. Therefore
$\mathcal{O}_S([mA]) \otimes \det G$ is big for infinitely many $m$. Therefore $\max_{m>0}\kappa(\det f_*\omega_{X/S}^{\otimes m})=\dim S$.
\end{proof}

%%%%%%%%%%%%%%%%%%%%%%%%%%%%%%%%%%%%%%%%%%%%%%%%%%%%%%%%%%%%%%%%%%%%%%%%%%%%%%%%%%%%%%%%%%%%%%%%%%%%%%%%%%%%%%%%%%%%%%%%%%%%%%%%%%%%%%%%%%%%%%%%%%%%%%%%%%%%%%%%%%%%%%%%%%%%%%%%%%%%%%%%%%%%%%%%%%%%%%%%%%%%%%%%%%%%%%%%%%%%%%%%%%%%%%%%%%%%%%%%%%%%%%%%%%%%%%%%%%%%%%%%%%%%%%%%%%%%%%%%%%%%%%%%%%%%%%%%%%%%%%%%%%%%%%%%%%%%%%%%%%%%%%%%%%%%%%%%%%%%%%%%%%%%%%%%%%%%%%%%%%%%%%%%%%%%%%%%%%%%%%%%%%%%%%%%%%%%%%%%%%%%%%%%%%%%%%%%%%%%%%%%%%%%%%%%%%%%%%%%%%%%%%%%%%%%%%%%%%%%%%%%%%%%%%%%%%%%%%%%%%%%%%%%%%%%%%%%%%%%%%%%%%%%%%%%%%%%%%%%%%%%%%%%%%%%%%%%%%%%%%%%%%%%%%%%%%%%%%%
%%%%%%%%%%%%%%%%%%%%%%%%%%%%%%%%%%%%%%%%%%%%%%%%%%%%%%%%%%%%%%%%%%%%%%%%%%%%%%%%%%%%%%%%%%%%%%%%%%%%%%%%%%%%%%%%%%%%%%%%
\section{Mochizuki's Galois theory}
%Let $k$ be an algebraically closed field of characteristic $0$ whose elements are uncountable.
Let $k$ be an algebraically closed field of characteristic $0$, say, the complex number field. 
We investigate the birational algebraic geometry from the point of view of the profinite Galois groups thanks to Mochizuki theory.
Let $X \to S$ be a fibre space of smooth algebraic spaces over $k$.  Let $\Spec k(\eta)$ denote the generic point of the fibre space and $k(\bar{\eta})$  the algebraic closure of $k(\eta)$. The absolute Galois group of $\mathrm{R}(X)$ is defined to be the Galois group with Kull topology of the Galois etension $\overline{\mathrm{R}(X)}/\mathrm{R}(X)$, which denotes $\Gamma_{X}= \mathrm{Gal}(\overline{\mathrm{R}(X)}/\mathrm{R}(X))$. This is a pofinite group.
\begin{theorem}\cite{Mch}
Let $p$ be a prime number. Let $K$ be a subfield of a finitely generated field extension of $\mathbb{Q}_{p}$. Let $X_K$ be a smooth pro-variety over $K$ and $Y_K$ a hyperbolic pro-curve over $K$. 
Let $\mathrm{Hom}^{\mathrm{dom}}_K(X_K, Y_K)$ be the set of dominant $K$-morphisms from $X_K$ to $Y_K$ and $\mathrm{Hom}^{\mathrm{open}}_{\Gamma_K}(\Pi_{X_K}, \Pi_{Y_K})$ the set of open continuous group homomorphisms $\Pi_{X_K}\to \Pi_{Y_K}$ over $\Gamma_K$, modulo up to inner automorphisms arising from $\Delta_{Y_{\bar{K}}}$. Then the natural map
\[\mathrm{Hom}^{\mathrm{dom}}_K(X_K, Y_K) \to \mathrm{Hom}^{\mathrm{open}}_{\Gamma_K}(\Pi_{X_K}, \Pi_{Y_K})
\]
is bijective.
\end{theorem}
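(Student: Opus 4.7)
The plan is to follow Mochizuki's strategy from the local pro-$p$ anabelian geometry of hyperbolic curves. First I would reduce to the case of ordinary (non-pro) varieties by a standard inverse-limit argument, since both $\mathrm{Hom}^{\mathrm{dom}}_K$ and $\mathrm{Hom}^{\mathrm{open}}_{\Gamma_K}$ commute with suitable cofiltered limits with affine transition maps. Then I would reduce the bijection to its geometrically pro-$p$ version: letting $\Pi_{X_K}^{(p)}$ denote the quotient of $\Pi_{X_K}$ by the kernel of the action on the maximal pro-$p$ quotient of $\Delta_{X_{\bar K}}$, one checks that open homomorphisms over $\Gamma_K$ factor through the pro-$p$ analogues, so it suffices to establish the pro-$p$ bijection.

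The injectivity half is comparatively formal. Two dominant morphisms $X_K \to Y_K$ inducing the same outer homomorphism on fundamental groups over $\Gamma_K$ agree after passing to a finite \'etale cover that rigidifies the target; combined with the hyperbolicity of $Y_K$, which forces $\mathrm{Aut}(Y_{\bar K})$ to be finite, this yields equality on a dense open subset, and hence on all of $X_K$ by separatedness of $Y_K$.

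For surjectivity, which is the heart of the matter, I would invoke Faltings's $p$-adic Hodge theory. Given an open continuous homomorphism $\alpha : \Pi_{X_K} \to \Pi_{Y_K}$ over $\Gamma_K$, the strategy is to transport, via $\alpha$, the Hodge--Tate data on finite \'etale covers of $Y_K$ to corresponding data on the $\alpha$-preimages in $X_K$. Using that $K$ is sub-$p$-adic, the weight-$1$ subspace of the Hodge--Tate decomposition of $H^1_{\mathrm{\acute et}}(Y_{\bar K},\mathbb{Q}_p) \otimes_{\mathbb{Q}_p} \mathbb{C}_p$ is canonically identified with $H^0(Y_K, \Omega^1_{Y_K/K}) \otimes_K \mathbb{C}_p$; pulling this identification back through $\alpha$ yields a compatible system of Hodge filtrations on the $X_K$-side, from which one extracts enough sections of an ample line bundle to realise $\alpha$ as induced by an honest morphism of schemes.

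The main obstacle is the final step of the surjectivity argument: descending from Hodge-theoretic almost-\'etale data over $\mathbb{C}_p$ back to a genuine $K$-morphism. The delicate point is that the Hodge--Tate decomposition only exists after extending scalars to $\mathbb{C}_p$, so one must verify $\Gamma_K$-equivariance with sufficient precision for Galois descent to produce an object over $K$. This is exactly where the sub-$p$-adic hypothesis on $K$ and the refined form of Faltings's theorem on $p$-adic Galois representations become indispensable, and where the argument ceases to be formal.
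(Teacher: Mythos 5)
The paper does not actually prove this statement: it is quoted, with the citation \cite{Mch}, from Mochizuki's \emph{Local Pro-$p$ Anabelian Geometry of Curves} and used as a black box, so there is no internal argument to compare yours against. Judged on its own terms, your proposal is an outline of Mochizuki's strategy rather than a proof, and while the outline points in the right general direction, the essential content is left unproven. Two smaller issues first. Your ``reduction to the geometrically pro-$p$ version'' is vacuous here, because $\Pi_{X_K}$ is already defined in the statement as $\pi_1(X_K)/\ker(\pi_1(X_{\bar K})\to\Delta_{X_{\bar K}})$ with $\Delta$ the maximal pro-$p$ quotient; there is nothing to reduce. And your injectivity argument does not work as stated: finiteness of $\mathrm{Aut}(Y_{\bar K})$ is about automorphisms of the target, whereas you must separate two distinct dominant morphisms $X_K\to Y_K$; what one actually uses is that a dominant morphism to a hyperbolic curve is determined by the induced pullback on the tower of finite \'etale covers (equivalently on $H^1$ with $\mathbb{Z}_p$-coefficients of those covers), which is genuinely encoded by the outer homomorphism.

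The serious gap is in surjectivity. The sentence ``pulling this identification back through $\alpha$ yields a compatible system of Hodge filtrations on the $X_K$-side, from which one extracts enough sections of an ample line bundle'' presupposes exactly what has to be proven: that an abstract open continuous homomorphism $\Pi_{X_K}\to\Pi_{Y_K}$ over $\Gamma_K$ induces a map on \'etale cohomology that is compatible with Hodge--Tate weights \emph{and} carries first Chern classes of line bundles to first Chern classes of line bundles. It is this Chern-class compatibility that converts cohomology classes into actual divisors and hence into a morphism of schemes, and establishing it is the bulk of Mochizuki's paper; it proceeds by first reducing to the case where $X_K$ is the spectrum of a function field, then running a delicate induction that combines Faltings's theorem (that $H^1_{\mathrm{\acute et}}$ of a curve over a sub-$p$-adic field is a de Rham, not merely Hodge--Tate, representation) with an analysis of the arithmetic of the Jacobian of $Y_K$. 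The $\mathbb{C}_p$-to-$K$ descent difficulty you single out at the end is real, but it is neither the only nor the principal non-formal input, so as written the proposal defers the heart of the theorem to a step it does not supply.
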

Here we have a natural homomorphism $\pi_1(X_K) \to \Gamma_K$. Let $\Delta_{X_{\bar{K}}}$ be the maximal pro-p quotient of the geometric fundamental group $\pi_1(X_{\bar{K}})$. Let $\Pi_{X_K} = \pi_1(X_K)/\ker(\pi_1(X_{\bar{K}}) \to \Delta_{X_{\bar{K}}})$.
\begin{theorem}\cite{Mch}
Let $p$ be a prime number. Let $K$ be a subfield of a finitely generated field extension of $\mathbb{Q}_{p}$. Let $L,M$ be function fields of arbitrary dimension over $K$. Let ${\rm Hom}_{\rm{Spec}(K)}(\rm{Spec}(L),\rm{Spec}(M))$ be the set of $K$-morphisms from $M$ to $L$. Let ${\rm Hom}^{open}_{\Gamma}(\Gamma_{L},\Gamma_{M})$ over $\Gamma_{K}$, considered up to composition with an inner automorphism arising from
$\rm{ker}(\Gamma_{M},\Gamma_{K})$, where $\Gamma_{L}$ and $\Gamma_{M}$ are the absolute Galois groups of $L$ and $M$, respectively.
Then the natural map
${\rm Hom}_{K}(\rm{Spec}(L),\rm{Spec}(M)) \to {\rm Hom}^{open}_{\Gamma_{K}}(\Gamma_{L},\Gamma_{M})$
is bijective.
\end{theorem}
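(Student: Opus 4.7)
The plan is to reduce Theorem 2 to Theorem 1 (the hyperbolic pro-curve case) by induction on the transcendence degree $d := \mathrm{tr.deg}(M/K)$, handling injectivity and surjectivity of the natural map separately. Injectivity is the easier part: realize $L = R(X)$ and $M = R(Y)$ for smooth $K$-varieties $X, Y$, so that a $K$-morphism $\Spec L \to \Spec M$ is a dominant rational map $X \dashrightarrow Y$. If two such maps $f_1, f_2$ induce the same class of homomorphisms $\Gamma_L \to \Gamma_M$ modulo the prescribed inner automorphisms, then after restricting to a sufficiently general hyperbolic curve $C \subset Y$ and its preimages $D_i \subset X$, the equality of the $f_i$ on function fields reduces to the injectivity half of Theorem 1.

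For surjectivity, induct on $d$. When $d = 1$, $M = R(C)$ for a smooth projective $K$-curve $C$; after removing enough closed points one realizes $\Spec M$ as the generic point of a hyperbolic pro-curve over $K$, while $\Spec L$ is a smooth pro-$K$-variety. An open continuous $\phi : \Gamma_L \to \Gamma_M$ over $\Gamma_K$ descends to an open continuous homomorphism between the pro-$p$ quotients $\Pi_{\Spec L} \to \Pi_{\Spec M}$ (since the geometric part of $\phi$ lands in a pro-$p$ quotient target, it factors through the pro-$p$ quotient of $\Gamma_{L\bar K}$), and Theorem 1, applied at each finite level and passed to the limit, yields a dominant $K$-morphism whose effect on function fields is the desired map $\Spec L \to \Spec M$. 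For the inductive step $d \geq 2$, pick $t \in M$ transcendental over $K$ with $K(t) \subsetneq M$. The inclusion $K(t) \hookrightarrow M$ induces a surjection $\Gamma_M \twoheadrightarrow \Gamma_{K(t)}$, and the composite $\Gamma_L \to \Gamma_M \to \Gamma_{K(t)}$ is open continuous over $\Gamma_K$; the base case yields $\tilde t \in L$, equipping $L$ with a $K(t)$-algebra structure. Since $K(t)$ is still a subfield of a finitely generated extension of $\mathbb{Q}_p$ and $\mathrm{tr.deg}(M/K(t)) = d - 1$, the inductive hypothesis applied with base field $K(t)$ upgrades $\phi$ to a $K(t)$-morphism $\Spec L \to \Spec M$, which is in particular the required $K$-morphism.

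The main obstacle is the inductive step. The most delicate point is tracking the inner-automorphism ambiguity: the inner automorphisms arising from $\ker(\Gamma_M \to \Gamma_K)$ must be matched, after composition with $\Gamma_M \twoheadrightarrow \Gamma_{K(t)}$, with those arising from $\ker(\Gamma_{K(t)} \to \Gamma_K)$, and then after applying the inductive hypothesis, with those arising from $\ker(\Gamma_M \to \Gamma_{K(t)})$; without this compatibility the reconstructed $K(t)$-structure on $L$ need not match the one implicit in $\phi$. A second subtlety is the transition between the pro-$p$ formulation of Theorem 1 (in terms of $\Pi_{X_K}$) and the full-Galois formulation of Theorem 2 (in terms of $\Gamma_L$), which requires showing that open continuous maps of absolute Galois groups of sub-$p$-adic function fields descend, up to an inner twist from $\Delta$, to compatible maps of the pro-$p$ quotients. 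Finally, the choice of $t$ should be made so that a smooth model of $\Spec M$ fibers over $\Spec K(t)$ with geometrically integral generic fiber, which in characteristic zero is guaranteed by a Bertini-type argument on a smooth projective model.
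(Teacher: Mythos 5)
The paper does not prove this statement at all: it is quoted, with the citation \cite{Mch}, as a black box from Mochizuki's anabelian geometry of sub-$p$-adic fields, so there is no in-paper argument to compare yours against. That said, your outline is essentially a reconstruction of Mochizuki's own derivation of the function-field theorem from the hyperbolic-curve theorem: an induction on $d=\mathrm{tr.deg}(M/K)$ in which one variable is split off at a time, with the curve case supplied by the first cited theorem applied to a hyperbolic pro-curve obtained by deleting closed points from a smooth model. (Mochizuki peels the curve off at the top, writing $M$ as the function field of a hyperbolic curve over a subfield $M_1$ with $\mathrm{tr.deg}(M_1/K)=d-1$ and applying the curve theorem over the sub-$p$-adic field $M_1$ at the last step; you peel it off at the bottom via $K(t)\subset M$. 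Both organizations are workable transpositions of the same double induction.)

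Two points in your sketch are genuine gaps rather than bookkeeping. First, the injectivity argument does not parse as written: restricting $f_1,f_2$ to a curve $C\subset Y$ and ``its preimages'' is not an operation on function fields in the right direction (the function field of $C$ is not a subfield of $M$), and the two maps have different preimages in any case. The workable version composes with dominant maps \emph{to} curves, i.e., with elements $g\in M$ transcendental over $K$: the induced outer homomorphism $\Gamma_L\to\Gamma_{K(g)}$ determines $f_i^{*}(g)\in L$ by injectivity in the $d=1$ case, and agreement of $f_1^{*}$ and $f_2^{*}$ on all such $g$ forces $f_1=f_2$. Second, the passage between the $\Pi$-formulation of the curve theorem (pro-$p$ geometric quotients, inner twists from $\Delta_{Y_{\bar K}}$) and the full $\Gamma_L\to\Gamma_M$ formulation here, together with the matching of the three distinct inner-automorphism ambiguities across the inductive step, is precisely where the content of Mochizuki's proof lives; you correctly identify these as the delicate points, but flagging them is not resolving them, and without that the induction does not close. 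A smaller omission: after producing $\tilde t$ you must replace $K(t)$ by its algebraic closure in $M$ (and verify it is still sub-$p$-adic and algebraically closed in $L$) before $M$ qualifies as a function field over the new base.
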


%%%%%%%%%%%%%%%%%%%%%%%%%%%%%%%%%%%%%%%%%%%%%%%%%%%%%%%%%%%%%%%%%%%%%%%%%%%%%%%%%%%%%%%%%%%%%%%%%%%%%%%%%%%%%%%%%%%%%%%%%%%%%%%%%%%%%%%%%%%%%%%%%%%%%%%%%%%%%%%%%%%%%%%%%%%%%%%%%%%%%%%%%%%%%%%%%%%%%%%%%%%%%%%%%%%%%%%%%%%%%%%%%%%%%%%%%%%%%%%%
\begin{theorem}[\cite{SGA} SGA1 EX.IX Th.6.1]
Let $S$ be the spectre of an Artinian ring $A$ with the residue field
$k$ , $\bar{k}$ a algrbraic closure of $k$, $X$ an $S$ scheme, $X_0 = X\otimes_{A}k$, $\bar{X}_0=X\otimes_A\bar{k}$, $\bar{a}$ a geometric point of $\bar{X}$, $a$ its image in $X$ and $b$ its image in $S$. Suppose that $X_0$ is quasi-compact and geometrically connected over $k$. Then the sequence of canonical homomorphisms
\[
1 \to \pi_1(\bar{X}_0,\bar{a}) \to \pi_1(X,a) \to \pi_1(S,b) \to 1
\]
is exact and
\[\pi_1(S,b) \cong \pi_1(k,\bar{k}) = \mathrm{Gal}(\bar{k}/k)\]
\end{theorem}
We in fact use the following homotopy exact sequence.
\begin{proposition}[\cite{TAM} Prop.5.6.1, \cite{SGA} SGA1, \cite{GG} Lemma 5 182-20]
Let $X$ be a quasi-compact and geometrically irreducible and connected scheme over a field $k$. Fix an algebraic closure $\bar{k}$ of $k$ and let $k_s$ the separable closure of $k$ in $\bar{k}$. Let $\bar{X} = X\otimes_k\bar{k}_s$, $\bar{x}$ a geometric poit of $\bar{X}$ with values in $\bar{k}$. The sequence of profinite groups
\[
1 \to \pi_1(\bar{X},\bar{x}) \to \pi_1(X,\bar{x}) \to \mathrm{Gal}(k_s/k) \to 1
\]
is exact.
\end{proposition}

\begin{proposition}
Let $k$ be an algebraically closed field of characteristic 0, $k(x)$ and $k(y)$ extension fields of k. Suppose $k(x)\otimes_kk(y) \to k(x,y)$ is injective.
Then 
\[ \pi_1(\mathrm{Spec}k(x,y),(\bar{x},\bar{y})) \to \pi_1(\mathrm{Spec}k(x)\otimes_kk(y),(\bar{x},\bar{y}))\]
is an isomorphism. Furthermore we have an isomorphism:
\[ \pi_1(\mathrm{Spec}k(x,y),(\bar{x},\bar{y})) \to \pi_1(\mathrm{Spec}k(x),\bar{x})\times \pi_1(\mathrm{Spec}k(y),\bar{y})\]
\end{proposition}
\begin{proof}
For an inclusion $k(x)\otimes_kk(y) \subset k(x,y)$ there exists a canonical dominant map: $\mathrm{Spec}k(x,y) \to \mathrm{Spec}k(x)\otimes_kk(y)$ and
a group homomorphism: $\pi_1(\mathrm{Spec}k(x)\otimes_kk(y),(\bar{x},\bar{y}))\to  \pi_1(\mathrm{Spec}k(x)\otimes_kk(y),(\bar{x},\bar{y}))$.
By universality of product there exists a homomorphism of groups $ \pi_1(\mathrm{Spec}k(x)\otimes_kk(y),(\bar{x},\bar{y}))  \to \pi_1(\mathrm{Spec}k(x),\bar{x})\times \pi_1(\mathrm{Spec}k(y),\bar{y})$ since we have $\pi_1(\mathrm{Spec}k(x)\otimes_kk(y),(\bar{x},\bar{y})) \to \pi_1(\mathrm{Spec}k(x),\bar{x})  $
and $\pi_1(\mathrm{Spec}k(x)\otimes_kk(y),(\bar{x},\bar{y})) \to \pi_1(\mathrm{Spec}k(y),\bar{y})  $.

 We have the following commutative diagram of three exact sequences.
\[
\xymatrix{
1 \ar[r] & \pi_1(\mathrm{Spec}k(x),\bar{x}) \ar[r]\ar[d] &  \pi_1(\mathrm{Spec}k(x,y),(\bar{x},\bar{y})) \ar[r] \ar[d]& \pi_1(\mathrm{Spec}k(y),\bar{y}) \ar[r]\ar[d] & 1 \\
1 \ar[r] & \pi_1(\mathrm{Spec}k(x),\bar{x}) \ar[r]\ar[d] & \pi_1(\mathrm{Spec}k(x)\otimes_kk(y),(\bar{x},\bar{y})) \ar[r]\ar[d] & \pi_1(\mathrm{Spec}k(y),\bar{y}) \ar[r]\ar[d] & 1 \\
1 \ar[r] & \pi_1(\mathrm{Spec}k(x),\bar{x}) \ar[r]& \pi_1(\mathrm{Spec}k(x),\bar{x})\times \pi_1(\mathrm{Spec}k(y),\bar{y}) \ar[r] & \pi_1(\mathrm{Spec}k(y),\bar{y}) \ar[r] & 1 \\
}
\]
Four arrows of both sides of exact sequence are isomorphisms. There are two arrows among profinite groups in the center of exact sequences. Hence they are isomorphisms.

\end{proof}

\begin{proposition}[\cite{Mch}]
Let $L$ be a sub-p-adic field and a function field. The absolute Galois group $\Gamma_L$ is center-free.
\end{proposition}
\begin{proposition}
Let $\bar{K}$ be an algebraically closed field of characteristic $0$ and $L$ a function field. The absolute Galois group $\Gamma_L$ is center-free.
\end{proposition}
\begin{proof}
Let $\bar{K}$ be an algebraically closed field.
It is known by Douady that the absolute Galois group of the function field of $\mathbf{P}^1_{\mathbf{C}}$ is a free profinite group. Every open subgroup of a free profinite group is also free(\cite{FJ}). A free profinite group is center-free.
Let $X$ be a variety associated to $L$. $X$ is considered to be a fibre space over a projective space $P$ with a general fibre a curve. Let $M$ be the absolute Galois group of the function field of $P$. Note that the absolute Galois group $M$ is center-free since a projective space is birationally equivalent to a product of projective lines. A natural group homomorphism $h:L \to M$ is surjective
The image of the center of $L$ by $h$ is contained in the center of $M$, which is an identity. Hence it is contained in $\ker h$, which is the absolute Galois group of the general generic fibre of $X/P$. An absolute Galois group of a curve is open subgroup of the absolute Galois group of the function field of a projective line, which is center-free. Thus $\ker h$ is center-free. Therefore $L$ is center-free.
\end{proof}

%%%%%%%%%%%%%%%%%%%%%%%%%%%%%%%%%%%%%%%%%%%%%%%%%%%%%%%%%%%%%%%%%%%%%%%%%%%%%%%%%%%%%%%%%%%%%%%%%%%%%%%%%%%%%%%%%%%%%%%%%%%%%%%%%%%%%%%%%%%%%%%%%%%%%%%%%%%%%%%%%%%%%%%%%%%%%%%%%%%%%%%%%%%%%%%%%%%%%%%%%%%%%%%%%%%%%%%%%%%%%%%%%%%%%%%%%%%%%%%%
Let $p$ be a prime number. Let $K$ be a subfield of a finitely generated field extension of $\mathbb{Q}_{p}$. It is called a sub-p-adic field. Note that there exists an isomorphism $\iota : \bar{K} \cong \mathbf{C}$ when $K$ is uncountable.

Let $X_{\bar{\eta}}$ be the geometric generic fibre of $X/S$. Then there exists a variety $F_{K_0}$ and a finitely generated extension field $K_0$ of $\mathbf{Q}$ such that
$F_{K_0}\times_{K_0}\mathbf{C} \cong X_{\bar{\eta}}$.

Note that $\mathrm{Bir}_{\mathbf{C}}( X_{\bar{\eta}}) = \mathrm{Bir}_{\overline{\mathbf{Q}_p}}(F_{K_0}\otimes_{K_0}\overline{\mathbf{Q}_p})$.
%%There exists a sub-p-adic field $K$ such that 
%%$\lim_{K^{\prime}}\mathrm{Bir}_{K^{\prime}}(F_{K^{\prime}}) \cong \mathrm{Bir}_{\mathbf{C}}X_{\bar{\%%eta}}$, where $K^{\prime}$ is a finite extension of $K$.

Let $\pi : \Gamma_{F_K} \to \Gamma_K$ denote the structure map associated to $\mathrm{Spec}\,K(F_K) \to F_K \to \mathrm{Spec}(K)$, which is a surjection since $K$ is algebraically closed in the rational function field of $F$ .  Let $Z(\Gamma_{F_K})$ denote the centre of $\Gamma_{F_K}$. Then $\pi$ induces $\pi : Z(\Gamma_{F_K}) \to Z(\Gamma_{K})$.

%%%%%%%%%%%%%%%%%%%%%%%%%%%%%%%%%%%%%%%%%%%%%%%%%%%%%%%%%%%%%%%%%%%%%%%%%%%%%%%%%%%%%%%%%%%%%%%%%%%%%%%%%%%%%%%%%%%%%%%%%%%%%%%%%%%%%%%%%%%%%%%%%%%%%%%%%%%%%%%%%%%%%%%%%%%%%%%%%%%%%%%%%%%%%%%%%%%%%%%%%%%%%%%%%%%%%%%%%%%%%%%%%%%%%%%%%%%%%%%%
\begin{lemma} Let $K$ be a field of characeristic $0$.
 Let $A$ be an algebraic space in group locally of finite type over $K$  (i.e. with at most countable components) and let $\rho: \Gamma_{S_K} \to A$ be a continuous homomorphism as topological groups.
Then 
\begin{enumerate}
\item
The image of this homomorphism $\rho$ is a finite group. 
\item Let $P=\Gamma_{S_K}$.
There exist a variety $S^{\prime}_K$ which is generically finite over $S_K$ and an injective homomorphism $P^{\prime} \to P$ with $(P^{\prime}:P)<\infty$ such that the representation $\rho^{\prime} : P^{\prime} \to A$ is trivial. Here $P^{\prime}$ denotes the absolute Galois group $\Gamma_{S_K^{\prime}}={\rm Gal}(\overline{K(S^{\prime}_K)}/K(S^{\prime}_K))$.
\end{enumerate}
\end{lemma}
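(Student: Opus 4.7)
The plan is to handle the two claims in order, since part (2) is essentially a formal consequence of part (1) via Galois theory.

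For part (1), I would first observe that $\Gamma_{S_K}$ is a profinite group and therefore compact and totally disconnected. Since $\rho$ is continuous, the image $\rho(\Gamma_{S_K})$ is a compact (hence closed) totally disconnected subgroup of $A$. The assumption that $A$ has at most countably many components lets me reduce to the case where the image lies in a single component: the image meets only countably many cosets of the identity component $A^\circ$, and being compact it can only intersect finitely many of them. Replacing $\Gamma_{S_K}$ by the open subgroup of finite index that maps into $A^\circ$, I may assume the image lies in $A^\circ$. Now via an embedding $\bar{K}\hookrightarrow \mathbf{C}$ the identity component acquires the structure of a connected complex Lie group. The classical ``no small subgroups'' principle says that in any Lie group a sufficiently small neighbourhood of the identity contains no non-trivial subgroup, and therefore any compact totally disconnected subgroup of a Lie group is finite. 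Applied to our image, this gives that $\rho(\Gamma_{S_K})$ is finite.

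For part (2), set $G=\rho(\Gamma_{S_K})$, a finite group by (1), and let $N=\ker \rho$, an open normal subgroup of finite index in $\Gamma_{S_K}$ with $\Gamma_{S_K}/N\cong G$. By the Galois correspondence for the absolute Galois group, $N$ corresponds to a finite Galois extension $L$ of $K(S_K)$ inside $\overline{K(S_K)}$ with Galois group $G$. Let $S'_K$ be a non-singular model of the normalisation of $S_K$ in $L$; then $S'_K\to S_K$ is generically finite of degree $|G|$, its function field is $L$, and
\[
\Gamma_{S'_K}=\mathrm{Gal}\bigl(\overline{K(S'_K)}/L\bigr)=N.
\]
Thus the inclusion $P'=N\hookrightarrow P=\Gamma_{S_K}$ has finite index, and the restriction $\rho'\colon P'\to A$ of $\rho$ is trivial by the very choice of $N=\ker\rho$.

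The main obstacle is part (1): one has to pin down exactly the topology on $A$ in which $\rho$ is required to be continuous, and then justify that continuous images of profinite groups in algebraic groups in characteristic zero are necessarily finite. The reduction to a single connected component (using countability of the component set together with compactness of the image) followed by the Lie-theoretic ``no small subgroups'' argument is the cleanest path; one has to be mildly careful that passing to a finite-index open subgroup of $\Gamma_{S_K}$ in order to land in $A^\circ$ does not interfere with the subsequent Galois-theoretic step, but this is harmless because the intersection of two open subgroups of finite index is again open of finite index.
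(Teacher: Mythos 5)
Your proof is correct, but it takes a genuinely different route from the paper's at the decisive step. The first reduction is the same in both: the composite $\Gamma_{S_K}\to A/A^{0}$ lands in a countable discrete group, and a compact (profinite) image there must be finite, so one may pass to an open finite-index subgroup mapping into $A^{0}$. From that point you argue transcendentally: fix an embedding $\bar{K}\hookrightarrow\mathbf{C}$, view $A^{0}(\mathbf{C})$ as a connected complex Lie group, and invoke the no-small-subgroups principle to conclude that a compact totally disconnected subgroup is finite; part (2) then falls out of the Galois correspondence applied to $\ker\rho$, which is open of finite index. The paper instead stays algebraic: it reads the continuous homomorphism $\Gamma_{S_K}\to A^{0}$ as a class in $H^{1}$, i.e.\ as an $A^{0}$-torsor $Q$ over $\mathrm{Spec}\,K(S_K)$, observes that a torsor trivializes over its own total space, and takes the Stein factorization of $Q\to S_K$ to produce $S'_K$ with $\Gamma_{S'_K}\to A$ trivial --- so the paper effectively proves (2) first and deduces (1) from the finiteness of the index $(\Gamma_{S_K}:\Gamma_{S'_K})$, whereas you prove (1) first and deduce (2). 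Your version is shorter and more elementary but depends on choosing an archimedean embedding and on pinning down the topology on $A(\mathbf{C})$ (a point you rightly flag, and which the statement itself leaves ambiguous); the paper's version is purely algebraic and constructs the trivializing cover $S'_K$ geometrically in one stroke. Both are legitimate; just make sure, in your reduction to $A^{0}$, to record that the finite-index subgroup you pass to is itself of the form $\Gamma_{S''_K}$ for some generically finite $S''_K\to S_K$, so that the final $S'_K$ in part (2) is still a variety generically finite over the original $S_K$.
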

\begin{proof}
An algebraic space in group $A$ is locally of finite type over $K$.
The representation $\rho: P \to A$ induces $\overline{\rho}: P \to A/A^{0}$, where $A^{0}$ denotes the neutral component of $A$. Note that there is no countable profinite group. Since $A/A^{0}$ is a countable set, $\overline{\rho}(P)$ is a finite group. Replace by $P$ the kernel of $\overline{\rho}$. We have $\rho: P \to A^{0}$. We have an isomorphism
$$ H^{1}(\overline{K(S_K)}/K(S_K), A^{0}(\overline{K(S_K)})) \cong H^{1}(P,A^{0})\cong \mathrm{Hom}_{\tiny\txt{Topological group}}^{\tiny\txt{Continuous}}(P, A^{0})$$ 
$$H^1_{\txt{\'et}}(\mathrm{Spec}\,K(S_K), A^0)\cong \mathrm{TORS}(\mathrm{Spec}\,K(S_K), A^0).$$
Let $Q$ be an $A^{0}$-torsor over $\mathrm{Spec}\, K(S_K)$ associated to $\rho: P \to A^{0}$.
$A^{0}$ is algebraic (quasi-compact, faithfully flat and of finite type) over $\Spec\,K(S_K)$. 
%%%%%%%%%%%%%%%%%%%%%%%%%%%%%%%%%%%%%%%%%%%%%%%%%%%%%%%%%%%%%%%%%%%%%%%%%%%%%%%
There exists an isomorphism $A^0 \times Q \to Q \times Q$ over $\Spec\,K(S_K)$. Thus along $\Spec\,K(Q) \to Q \to \Spec\,K(S_K)$, the pullback of $A^0$-torsor $Q$ becomes trivial. Namely, the $A^{0}$-torsor $Q$ is trivial over $\Spec\,K(S^{\prime}_K))$.   Hence
the induced homomorphism $\mathrm{Gal}(\overline{K(Q)}/K(Q)) \to A^0$ is trivial. 
Let $S^{\prime}_K \to S_K$ be dominant and $S^{\prime}_K$ the subvariety of $Q$ of the same dimension as $S_K$.
Then $\Gamma_{Q} \to \Gamma_{S^{\prime}_K}$ is a surjective homomorphism.
We have
\[  \Gamma_{Q} \to \Gamma_{S^{\prime}_K} \subset \Gamma_{S_K} \to A^0 \to A .  \]
Since $\Gamma_Q \to A^0$ is trivial, i.e., $\Gamma_Q \to 1$,
$\Gamma_{S^{\prime}_K} \to A$ is trivial. 
It is obvious that $(\Gamma_{S^{\prime}_K} : \Gamma_{S_K}) < \infty$.
Hence $\mathrm{im}(\rho)$ is a finite group.
\end{proof}

Note that a quotient of a scheme by a finite group is in the category of algebraic spaces.

%%%%%%%%%%%%%%%%%%%%%%%%%%%%%%%%%%%%%%%%%%%%%%%%%%%%%%%%%%%%%%%%%%%%%%%%%%%%%%%%%%%%%%%%%%%%%%%%%%%%%%%%%%%%%%%%%%%%%%%%%%%%%%%%%%%%%%%%%%%%%%%%%%%%%%%%%%%%%%%%%%%%%%%%%%%%%%%%%%%%%%%%%%%%%%%%%%%%%%%%%%%%%%%%%%%%%%%%%%%%%%%%%%%%%%%%%%%%%%%
\begin{proposition}
Let $X/S$ be a fibre space.
Let $1 \to G \to E \to P \to 1$ be an extension of a profinite group $P$ by a profinite group $G$ associated to a fibre space $X \to S$. Namely $G$, $E$ and $P$ are profinite groups which are the absolute Galois groups associated to the rational function fields of the generic geometric fibre $X_{\bar{\eta}}$, $X$ and $S$, respectively.
\end{proposition}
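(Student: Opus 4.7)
The plan is to exhibit the short exact sequence as coming from the tower of function fields $\mathrm{R}(S)\subset \mathrm{R}(X)\subset \overline{\mathrm{R}(X)}$ together with a chosen algebraic closure $\overline{\mathrm{R}(S)}\subset \overline{\mathrm{R}(X)}$. Under this convention the restriction map $E=\Gamma_{X}\to \Gamma_{S}=P$, sending $\sigma$ to $\sigma|_{\overline{\mathrm{R}(S)}}$, is well defined (because $\sigma$ fixes $\mathrm{R}(S)$ and therefore preserves its algebraic closure inside $\overline{\mathrm{R}(X)}$), and what must be verified is that (i) this map is surjective and (ii) its kernel is canonically identified with $G=\Gamma_{X_{\bar{\eta}}}$.

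For surjectivity, the decisive input is the fibre-space hypothesis on $f:X\to S$. Connectedness of $f$ together with properness gives $f_{*}\mathcal{O}_{X}=\mathcal{O}_{S}$, whence $\mathrm{R}(S)$ is algebraically closed in $\mathrm{R}(X)$. Consequently $\mathrm{R}(X)$ and $\overline{\mathrm{R}(S)}$ are linearly disjoint over $\mathrm{R}(S)$ inside $\overline{\mathrm{R}(X)}$, so that the compositum $\mathrm{R}(X)\cdot \overline{\mathrm{R}(S)}$ is Galois over $\mathrm{R}(X)$ with Galois group canonically isomorphic to $P$. Since any automorphism of an intermediate Galois subextension lifts to $\overline{\mathrm{R}(X)}/\mathrm{R}(X)$, the restriction $E\to P$ is onto.

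For the identification of the kernel, $\ker(E\to P)$ is tautologically $\mathrm{Gal}(\overline{\mathrm{R}(X)}/\mathrm{R}(X)\cdot\overline{\mathrm{R}(S)})$. In characteristic zero, with $X$ and $S$ smooth and $f$ a fibre space, generic smoothness combined with geometric connectedness of the generic fibre forces the generic geometric fibre $X_{\bar{\eta}}=X\times_{S}\Spec\overline{\mathrm{R}(S)}$ to be irreducible; its function field is then exactly the compositum $\mathrm{R}(X)\cdot \overline{\mathrm{R}(S)}$ inside $\overline{\mathrm{R}(X)}$. Hence $\mathrm{Gal}(\overline{\mathrm{R}(X)}/\mathrm{R}(X)\cdot\overline{\mathrm{R}(S)})=\Gamma_{X_{\bar{\eta}}}=G$, and the desired exact sequence $1\to G\to E\to P\to 1$ follows, with continuity of all maps coming automatically from the Krull topology.

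The main subtlety is the irreducibility of $X_{\bar{\eta}}$, which guarantees that $G$ is the absolute Galois group of a single well-defined field rather than a product indexed by irreducible components. This rests on $f_{*}\mathcal{O}_{X}=\mathcal{O}_{S}$ together with the characteristic-zero hypothesis through generic smoothness. Once this point is settled, the construction is just the standard short exact sequence of absolute Galois groups associated to a tower of function fields whose bottom field is algebraically closed in the middle, and it runs completely parallel to the homotopy exact sequence of étale fundamental groups attached to a fibration with geometrically connected fibres.
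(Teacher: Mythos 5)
Your argument is correct, but it takes a more hands-on route than the paper does. The paper's proof consists essentially of a citation: it applies Grothendieck's homotopy exact sequence for the algebraic fundamental group from SGA1 to the morphism $\mathrm{Spec}(\mathrm{R}(X)) \to \mathrm{Spec}(\mathrm{R}(S))$ and then identifies $\pi_1$ of the spectrum of a field with the absolute Galois group of that field, so that the sequence $1 \to G \to E \to P \to 1$ drops out as a special case of a general theorem. You instead unwind what that sequence means for a tower of function fields and prove it directly: restriction along $\overline{\mathrm{R}(S)} \subset \overline{\mathrm{R}(X)}$ gives the map $E \to P$; the fibre-space hypothesis ($f_{*}\mathcal{O}_{X}=\mathcal{O}_{S}$, hence $\mathrm{R}(S)$ algebraically closed in $\mathrm{R}(X)$, hence linear disjointness of $\mathrm{R}(X)$ and $\overline{\mathrm{R}(S)}$) gives surjectivity; and the identification of $\mathrm{R}(X_{\bar{\eta}})$ with the compositum $\mathrm{R}(X)\cdot\overline{\mathrm{R}(S)}$ identifies the kernel with $G$. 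What your version buys is that it makes explicit exactly where the fibre-space hypothesis enters --- namely in the geometric irreducibility of the generic fibre, without which $G$ would not be the absolute Galois group of a single field and the sequence would fail to be exact at both ends; the paper leaves this verification implicit in the applicability of the SGA1 sequence. What the paper's version buys is brevity and the conceptual placement of the statement inside the general formalism of $\pi_1$, which is the formalism the rest of that section works in. The two proofs establish the same identification and are compatible.
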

\begin{proof}
Let $X/S$ be a fibre space with the generic geometric fibre $X_{\bar{\eta}}$. To a fibre space the epimorphism $\Gamma_{\mathrm{R}(X)} \to \Gamma_{\mathrm{R}(S)}$ is associated. 
Consider a morphism $\mathrm{Spec}(\mathrm{R}(X)) \to \mathrm{Spec}(\mathrm{R}(S))$ with the generic geometric fibre $\mathrm{Spec}\mathrm{R}(X_{\bar{\eta}})$.
Grothendieck's algebraic $\pi_1$ in SGA1(\cite{SGA}) gives the exact sequence:
$1 \to \pi_1(\mathrm{Spec}\mathrm{R}(X_{\bar{\eta}})) \to \pi_1(\mathrm{Spec}(\mathrm{R}(X))) \to \pi_1(\mathrm{Spec}(\mathrm{R}(S))) \to 1$.
Here $\pi_1(\mathrm{Spec}\mathrm{R}(X_{\bar{\eta}}))$, $\pi_1(\mathrm{Spec}(\mathrm{R}(X)))$ and $\pi_1(\mathrm{Spec}(\mathrm{R}(S)))$ are the absolute Galois groups $G$, $E$ and $P$ themselves, respectively. See the diagram:
\[ 
\xymatrix{
X\ar[d] &  X_{\bar{\eta}}\ar[d]\ar[l]     &   & \Gamma_{\mathrm{R}(X)}\ar[d] & \Gamma_{\mathrm{R}(X_{\bar{\eta}})}\ar[l]\ar[d]\\
 S         &   \mathrm{Spec}k(\bar{\eta})\ar[l] &  & \Gamma_{\mathrm{R}(S)} & 1=\Gamma_{k(\bar{\eta})}\ar[l] \\
}
 \]
 Thus to a fibre space $X/S$ the extension of a profinite groups $1 \to G \to E \to P \to 1$ is associated.
\end{proof}
We make use of theory of Schreier's classification of group extensions, Grothendieck-Giraud's classification of topos extensions or Breen's classification of 2-gerbes and 2-stacks.(\cite{AM}, \cite{Gir}, \cite{Breen1}, \cite{Breen2}) Here we take the notion of Breen's. 
\begin{definition}
\begin{description}
\item
An extension of groups $1 \to G \to E \to P \to 1$ is said to be neutral if it has a section which is a group homomorphism $\sigma: P \to E$.
\item
An extension of groups $1 \to G \to E \to P \to 1$ is said to be central if $G$ is contained in the center of $E$.
\item
$E$ is said to be a semi-direct product of $G$ and $P$ if $G$ is a normal subgroup of $E$ and if the multiplication $(x,y), (u,v) \in G\times P$ is defined by $(xu^{y},yv)$, where $u^y=\sigma(y)u\sigma(y)^{-1}$. $E$ is denoted by $G\rtimes P$.
\item ${\rm Inn}(G)$ denotes the inner automorphism group of $G$.
$E \to {\rm Aut}(G)$ denotes the natural homomorphism  $x \in E \mapsto (g \mapsto g^x) \in {\rm Aut}(G)$.
${\rm Out}(G)$ is defined to be ${\rm Aut}(G)/{\rm Inn}(G)$.
This induces a homomorphism $E/G \to {\rm Aut}(G)/{\rm Inn}(G)$,i.e., $P \to {\rm Out}(G)$. 
\item We call left crossed module a homomorphism of groups $\delta : G \to H$, equipped with a left action of $H$ onto $G$ $(h,g) \mapsto {}^hg$(\cite{Breen1}):
\begin{enumerate}
\item $\delta({}^hg) = h\delta(g)h^{-1}$
\item  ${}^{\delta(g^{\prime})}g = g^{\prime}gg^{\prime{-1}}$
\end{enumerate}
\[
\xymatrix{
 G\ar[r]^{\delta}\ar[rd]  & H\ar[d]  \\
     & \mathrm{Aut}(G) \\
}
\]
\item $i: G \to \mathrm{Aut}(G)$, where $g \mapsto i_g(x \mapsto gxg^{-1})$, and the natural action $\mathrm{Aut}(G)$ onto $G$ defines a crossed module, which denotes $G \to \mathrm{Aut}(G)$.
\end{description}
\end{definition} 
To an exact sequence $ 1 \to {\rm Inn G} \to {\rm Aut} G \to {\rm Out} G \to 1$,
we have an exact sequence
$$ {\rm H}^1(P,{\rm Inn} G) \to {\rm H}^1(P,{\rm Aut} G) \to {\rm H}^1(P,{\rm Out} G),$$
i.e.,
$${\rm Hom}(P,{\rm Inn} G) \to {\rm Hom}(P, {\rm Aut} G) \to {\rm Hom}(P, {\rm Out} G).$$
Here ${\rm Out}G$ denotes the outer automorphism group of $G$. Let $G \to {\rm Aut} G$ denote the crossed module.
The set of the etensions of a profinite group $P$ by a profinite group $G$ denotes $\mathrm{Ext}(P,G)$.
A group extension can be defined to be as an element of ${\rm H}^1(P,(G\to {\rm Aut} G))$.
There exists an exact sequence $1 \to Z(G)[1] \to (G \to \mathrm{Aut}(G)) \to \mathrm{Out}{G} \to 1$(\cite{Breen2}).
We have the exact sequence of cohomologies(\cite{Breen2}):
$$0 \to {\rm H}^2(P,{\rm Z}(G)) \to {\rm H}^1(P,(G\to {\rm Aut} G)) \to {\rm H}^1(P,{\rm Out} G).$$
Here ${\rm Z}(G)$ denotes the center of $G$.
There exists another sequence $ Aut(G) \to (G \to Aut(G)) \to G[1] $ in the homotopy category.
See the next commutative diagram:
\[
\xymatrix{
 H^1(P,Inn(G)) \ar[dr]\ar[r] & H^1(P,Aut(G))\ar[r]  &  H^2(P,Z(G))\ar[d] &   \\
 &H^1(P,Aut(G))\ar[dr]\ar[r]    & H^1(P,G \to Aut(G))\ar[d] &          \\
  &   & H^1(P,Out(G))\ar[d] &    \\
  &   & H^3(P,Z(G))   &    \\
}
\]
This vertical sequence is nothing but the following exact sequence
\[
\mathrm{Ext}(P,{\rm Z}(G)) \to \mathrm{Ext}(P,G) \to {\rm Hom}(P,{\rm Out}(G))\to H^3(P,Z(G))  
\]

When $Z(G)=0$, an extension $1\to G \to E \to P \to 1$ is determined uniquely bya continuous group homomorphism $\phi: P \to \mathrm{Out}(G)$ pulling back
the exact sequence $ 1 \to G \to \mathrm{Aut}(G) \to \mathrm{Out}(G) \to 1$(cf.22 Th.4.8(\cite{AM}). Hence
\[
H^1(P, G\to \mathrm{Aut}(G)) \cong H^1(P,\mathrm{Out}(G)) \cong \mathrm{Hom}(P,G)
\]

In general we refer to the following theorem.
\begin{theorem}[\cite{Gir}, \cite{Breen1}, \cite{Breen2}, \cite{Rou},\cite{AM} ]
There are equivalent expressions of extensions of $\Gamma_{S_K}$ by $G$.
\begin{enumerate}
\item 
\[
\mathrm{BiTors}(G) \cong \mathrm{Eq}(G[1]) \cong (G \to \mathrm{Aut}(G))
\]
as monoidal categories.
\item
\[H^1(\Gamma_{S_K}, \mathrm{BiTors}(G)) \cong H^1(\Gamma_{S_K}, \mathrm{Eq}(G[1])) \cong H^1(\Gamma_{S_K}, (G \to \mathrm{Aut}(G))) 
\]
as pointed sets
\item
\[\mathrm{Mon}(\Gamma_{S_K}, \mathrm{BiTors}(G)) \cong \mathrm{Mon}(\Gamma_{S_K}, \mathrm{Eq}(G[1])) \cong \mathrm{Mon}(\Gamma_{S_K}, (G \to \mathrm{Aut}(G))) 
\]
as morphisms of monoidal categories.
\end{enumerate}
\end{theorem}
%%%%%%%%%%%%%%%%%%%%%%%%%%%%%%%%%%%%%%%%%%%%%%%%%%%%%%%%%%%%%%%%%%%%%%%%%%%%%%%%%%%%%%%%%%%%%%%%%%%%%%%%%%%%%%%%%%%%%%%%%%%%%%%%%%%%%%%%%%%%%%%%%%%%%%%%%%%%%%%%%%%%%%%%%%%%%%%%%%%%%%%%%%%%%%%%%%%%%%%%%%%%%%%%%%%%%%%%%%%%%%%%%%%%%%%%%%%%%%%%
\begin{proposition}
Let $K$ be a field of characteristic 0. Let $K(x)$ and $K(y)$ be function fields over $K$. Assume $K(x)\otimes_KK(y) \to K(x,y)$ is an inclusion of $K$-algebras. Then $\pi_1(K(x,y),(\bar{x},\bar{y})) \to \pi_1(K(x),\bar{x})\times_{\pi_1(K,\bar{K})}\pi_1(K(y),\bar{y})$ is isomorphic.
\end{proposition}
\begin{proof} Let $\bar{K}$ be an algebraic closure in an algebraically closed field which is an extension of $K$.
Let $G=\pi_1(\bar{K}(x),\bar{x})$, $\Gamma_K = \pi_1(K,\bar{K})$.
The following diagram in two vertical extensions in the right-side is a base-change of extensions:
\[
\xymatrix{ 1\ar[d] & 1\ar[d]  &  1\ar[d]  \\
 G\ar[d]       &  G\ar[d] \ar[l] &  G\ar[l]\ar[d]  \\
 ?? \ar[d]      &  \pi_1(K(x,y),(\bar{x},\bar{y}))\ar[d]\ar[l] & \pi_1(\bar{K}(x),\bar{x})\times \pi_1(\bar{K}(y),\bar{y}) \ar[d]\ar[l]  \\
 \pi_1(K,\bar{K}) \ar[d]    & \pi_1(K(y),\bar{y})\ar[d]\ar[l] & \pi_1(\bar{K}(y),\bar{y}) \ar[d]\ar[l]  \\
   1    &   1      &    1     \\
}
\]
There exists an exact sequence:
$ 1 \to \pi_1(\bar{K}(y),\bar{y}) \to \pi_1(K(y),\bar{y}) \to \pi_1(K,\bar{K}) \to 1$.
Consider the following exact sequence of extensions and an element $\xi$ associated to the central vertical extension in the diagram above:
\[
1 \to \mathrm{Mon}_{\Gamma_K}(\Gamma_K, \mathrm{Eq}(G[1])) \to \mathrm{Mon}_{\Gamma_K}(\pi_1(K(y),\bar{y})), \mathrm{Eq}(G[1])) \to \mathrm{Mon}_{\Gamma_{\overline{K}}}(\pi_1({\overline{K}}(y),\bar{y}), \mathrm{Eq}(G[1]))
\]
The image of $\xi$ in $\mathrm{Mon}_{\Gamma_{\overline{K}}}(\pi_1({\overline{K}}(y),\bar{y}), \mathrm{Eq}(G[1]))$ is a trivial. Hence there exists an extension $\xi_0$ in $ \mathrm{Mon}_{\Gamma_K}(\Gamma_K, \mathrm{Eq}(G[1]))$ whose image is $\xi$ in $\mathrm{Mon}_{\Gamma_K}(\pi_1(K(y),\bar{y})), \mathrm{Eq}(G[1]))  $
$\xi_0$ corresponds to a vertical extension in the left-side.
Thus $??$ is $\pi_1(K(x),\bar{x})$. Therefore
\[
\pi_1(K(x,y),(\bar{x},\bar{y})) \cong \pi_1(K(x),\bar{x})\times_{\pi_1(K,\bar{K})}\pi_1(K(y),\bar{y})
\]
\end{proof}

%%%%%%%%%%%%%%%%%%%%%%%%%%%%%%%%%%%%%%%%%%%%%%%%%%%%%%%%%%%%%%%%%%%%%%%%%%%%%%%%%%%%%%%%%%%%%%%%%%%%%%%%%%%%%%%%%%%%%%%%%%%%%%%%%%%%%%%%%%%%%%%%%%%%%%%%%%%%%%%%%%%%%%%%%%%%%%%%%%%%%%%%%%%%%%%%%%%%%%%%%%%%%%%%%%%%%%%%%%%%%%%%%%%%%%%%%%%%%%%%

From here in several steps we shall prove that when there exists an $S$-dominant rational map $Y\to X$ and when $\mathrm{var}(Y/S)=0$, then $\mathrm{var}(X/S)=0$ if the general generic fibre of $X/S$ is of Kodaira dimension $\geq 0$.

\begin{proposition}[\cite{EGA}]
Let $S$ be a scheme, $(X_{\lambda},v_{\lambda\mu})$ a filtered projective system of $S$-schemes; assume that there exists $\alpha$ such that $v_{\alpha\lambda}$ is an affine morphism for every $\lambda \geq \alpha$,
so that the projective limit $\displaystyle X=\lim_{\hspace{-1mm}{\longleftarrow}}X_{\lambda}$ exists in the category of $S$-schemes.
 Let $Y$ be an $S$-scheme and for every $\lambda \geq \alpha$ let $e_{\lambda}: \mathrm{Hom}_{S}(X_{\lambda}, Y) \to \mathrm{Hom}_{S}(X,Y)$the map which gives $f=f_{\lambda}\circ v_{\lambda}$ to each $S$-morphism $f_{\lambda}: X_{\lambda} \to Y$, where $v_{\lambda}: X \to X_{\lambda}$ is the canonical morphism. The family $(e_{\lambda})$ is an inductive system of maps, which defines the canonical map
\[\lim_{\hspace{-1mm}{\longrightarrow}} \mathrm{Hom}_S(X_{\lambda},Y) \to \mathrm{Hom}_S(X,Y).\]
Suppose that $X_{\alpha}$ is quasi-compact and quasi-separated and that the structure morphism $Y \to S$ is locally of finite presentation (resp.locally of finite type).
Then the map above is bijective (resp. injective).
Furthermore, suppose that $\displaystyle \lim_{\hspace{-1mm}{\longleftarrow}}Y_{\rho}$, where $(Y_{\rho},t_{\rho\sigma}$ is a filtered projective system of $S$-schemes such that the structure morphism is locally of finite presentation for every $\rho$. One has a canonical bijection
\[
\mathrm{Hom}_S(X,Y) \cong \lim_{\hspace{-1mm}{\longleftarrow}_{\hspace{-3mm}{}_{\rho}}}(\lim_{\hspace{-1mm}{\longrightarrow}_{\hspace{-3mm}{}_{\lambda}}}\mathrm{Hom}_S(X_{\lambda}, Y_{\rho})).
\]
\end{proposition}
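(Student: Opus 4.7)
The plan is to follow the standard EGA~IV, \S 8 argument by three successive reductions: first to the case where $Y\to S$ is affine, second to the case where every $X_{\lambda}$ is affine, and finally to the ring-theoretic identity $\Gamma(X,\mathcal{O}_X)=\varinjlim\Gamma(X_{\lambda},\mathcal{O}_{X_{\lambda}})$. The finite-presentation / finite-type distinction governs precisely whether the colimit map is bijective or merely injective.

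First I would reduce to affine $Y$. When $Y\to S$ is locally of finite presentation, $Y$ is locally of the form $\mathrm{Spec}(\mathcal{O}_S[T_1,\ldots,T_n]/(f_1,\ldots,f_m))$; an $S$-morphism into such an affine is the datum of ring-theoretic images of the $T_i$ satisfying the relations $f_k$. This is a finite amount of data and therefore passes through a finite stage of the colimit, reducing matters to the ring-theoretic identity above. Dropping the relations $f_k$ gives only injectivity, which is the finite-type case.

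Second, using quasi-compactness of $X_{\alpha}$, cover it by finitely many affines $U_i^{(\alpha)}$; the assumption that each $v_{\alpha\lambda}$ is affine ensures the preimages $U_i^{(\lambda)}$ in every $X_{\lambda}$ and in $X$ are again affine. Quasi-separatedness forces each intersection $U_i^{(\alpha)}\cap U_j^{(\alpha)}$ to be quasi-compact, hence a finite union of affines. Given $f:X\to Y$, its restriction to each $U_i$ factors through an affine open of $Y$ and, by the affine case, descends to some $f_i^{(\lambda_i)}:U_i^{(\lambda_i)}\to Y$. Passing to an index $\lambda$ larger than all $\lambda_i$ and then, using the injectivity half of the affine case on each of the finitely many affine pieces of the $U_i\cap U_j$, enlarging $\lambda$ once more so that the $f_i^{(\lambda)}$ coincide on overlaps, produces a single $S$-morphism $X_{\lambda}\to Y$ whose pullback along $v_{\lambda}$ is $f$.

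For the final statement with $Y=\varprojlim Y_{\rho}$, I would use the universal property of inverse limits, $\mathrm{Hom}_S(X,\varprojlim Y_{\rho})=\varprojlim_{\rho}\mathrm{Hom}_S(X,Y_{\rho})$, and apply the first part to each $Y_{\rho}$ to rewrite $\mathrm{Hom}_S(X,Y_{\rho})=\varinjlim_{\lambda}\mathrm{Hom}_S(X_{\lambda},Y_{\rho})$, yielding the iterated limit. The main obstacle is the gluing step: one must arrange that a single cofinal $\lambda$ simultaneously resolves every local descent \emph{and} every pairwise overlap compatibility. It is precisely quasi-separatedness that confines the overlap data to finite combinatorial data amenable to filtered colimits; without it the required cofinal index need not exist, and the bijectivity statement would fail.
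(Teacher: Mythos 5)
The paper does not actually prove this proposition: it is quoted verbatim from EGA~IV, \S 8 (essentially the combination of the results (8.13.1)/(8.14.2) with the elementary compatibility of $\mathrm{Hom}$ with filtered projective limits in the target) and is used later purely as a black box. Your proposal, by contrast, sketches the EGA argument itself, and the skeleton is the right one: reduction to affine target via local finite presentation, reduction to affine source via quasi-compactness of $X_{\alpha}$ and affineness of the transition maps, the ring-theoretic identity $\Gamma(X,\mathcal{O}_X)=\varinjlim_{\lambda}\Gamma(X_{\lambda},\mathcal{O}_{X_{\lambda}})$, the correct explanation of why finite type only yields injectivity (the defining relations need no longer be finitely many, so a morphism cannot be descended, though two can be compared, at a finite stage), and the universal property of $\varprojlim Y_{\rho}$ for the last assertion. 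One step is stated too strongly: the restriction of $f\colon X\to Y$ to an affine piece $U_i$ need not factor through a single affine open of $Y$. You must first refine the cover --- each $U_i$ is quasi-compact, so $f(U_i)$ meets only finitely many members of a chosen affine cover of $Y$, and the refined finite affine cover of $X$ descends to some finite level $X_{\lambda}$ by the descent-of-open-subschemes lemma of EGA~IV~8.2--8.3; quasi-separatedness then keeps the overlap data finite exactly as you say. With that repair, and with the injectivity of the colimit map also recorded globally (not only on overlaps during gluing), your outline is the standard proof of the result the paper merely cites.
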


\begin{proposition}

Let $K$ be a sub-p-adic field and $X/S$ a fibre space of varieties over $K$.
Let $S_{\lambda}$ be a filtered projective system of $K$-varieties such that
\begin{enumerate}
\item $K(S_{\lambda})/K(S)$ is a normal extension for every $\lambda$

\item the structure homomorphism
$\Gamma_{K(S_{\lambda})} \to \Gamma_{K}$ is surjective.
\end{enumerate}
Let $\Gamma_{\overline{S}}=\ker(\Gamma_{S}\to \Gamma_{K})$ and  $\Gamma_{\overline{X}}=\ker(\Gamma_{X}\to \Gamma_{K})$. Suppose that there exists 
a sectional homomorphism $\Gamma_{\overline{S}} \to \Gamma_{\overline{X}} \subset \Gamma_{X}$ so that one has a homomorphism $\Gamma_{\overline{S}} \to \mathrm{Aut}_{\Gamma_K}(\Gamma_{X})$ and $\Gamma_{\overline{S}} \to \mathrm{Aut}_{\Gamma_K}(\Gamma_{X_{\lambda}})$.

Let $\displaystyle B(\overline{S}) = \mathrm{im}(\Gamma_{\overline{S}} \to \lim_{\hspace{-1mm}{\longleftarrow}_{\hspace{-3mm}{}_{\mu}}} (\lim_{\hspace{-1mm}{\longrightarrow}_{\hspace{-3mm}{}_{\lambda}}}\mathrm{Hom}^{open}_{\Gamma_K}(\Gamma_{X_{\lambda}}, \Gamma_{X_{\mu}})/\Gamma_{\overline{X_{\mu}}}).$
One has a canonical Mochizuki bijection:
\[
\lim_{\hspace{-1mm}{\longleftarrow}_{\hspace{-3mm}{}_{\mu}}}(\lim_{\hspace{-1mm}{\longrightarrow}_{\hspace{-3mm}{}_{\lambda}}}  \mathrm{Hom}^{open}_{\Gamma_K}(\Gamma_{X_{\lambda}}, \Gamma_{X_{\mu}})/\Gamma_{\overline{X_{\mu}}}) \cong \lim_{\hspace{-1mm}{\longleftarrow}_{\hspace{-3mm}{}_{\mu}}}(\lim_{\hspace{-1mm}{\longrightarrow}_{\hspace{-3mm}{}_{\lambda}}} \mathrm{Mor}_K^{dom}(\mathrm{Spec}(K(X_{\lambda}), \mathrm{Spec}(K(X_{\mu}))).
\]
By the precedent proposition, one obtains
\[
\lim_{\hspace{-1mm}{\longleftarrow}_{\hspace{-3mm}{}_{\mu}}}(\lim_{\hspace{-1mm}{\longrightarrow}_{\hspace{-3mm}{}_{\lambda}}} \mathrm{Mor}_K^{dom}(\mathrm{Spec}(K(X_{\lambda}), \mathrm{Spec}(K(X_{\mu})))
\subset \mathrm{Mor}_K(\lim_{\hspace{-1mm}{\longleftarrow}_{\hspace{-3mm}{}_{\lambda}}}\mathrm{Spec}\,K(X_{\lambda}), \lim_{\hspace{-1mm}{\longleftarrow}_{\hspace{-3mm}{}_{\mu}}}\mathrm{Spec}\,K(X_{\mu})).
\]
One also gets canonical homomorphisms of topological groups:
\[ \Gamma_{\overline{S}} \to  B(\overline{S}) \subset \mathrm{Aut}_K(\lim_{\hspace{-1mm}{\longleftarrow}_{\hspace{-3mm}{}_{\lambda}}}\mathrm{Spec}\,K(X_{\lambda})) \subset \mathrm{Bir}_{\bar{K}}(X_{\bar{\eta}}).
\]

In the other way,
one obtains
a canonical homomorphism
\[
\Gamma_{\overline{S}}\to  B(\overline{S}) \to
\mathrm{Aut}_{\Gamma_K}(\lim_{\hspace{-1mm}{\longleftarrow}_{\hspace{-3mm}{}_{\lambda}}} \Gamma_{X_{\lambda}}, \lim_{\hspace{-1mm}{\longleftarrow}_{\hspace{-3mm}{}_{\mu}}} \Gamma_{X_{\mu}})/\lim_{\hspace{-1mm}{\longleftarrow}_{\hspace{-3mm}{}_{\mu}}} \Gamma_{\overline{X_{\mu}}}
\to \mathrm{Out}(\Gamma_{X_{\bar{K}}}).
\]

In particular, if $\Gamma_{\overline{S}} \to \mathrm{Bir}_{\bar{K}}({\bar{K}(X_{\eta}})$ is a trivial homomorphism, then so is $\Gamma_{\overline{S}} \to \mathrm{Out}(\Gamma_{X_{\bar{K}}})$.
\end{proposition}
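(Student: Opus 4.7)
The plan is to verify that the commutative diagram of canonical maps implicitly asserted in the proposition does commute, and then to read off the final implication from Mochizuki's bijection by a short diagram chase. The substance of the construction is Mochizuki's second (function-field) theorem applied level by level, combined with the EGA compatibility of $\mathrm{Hom}$ with filtered projective limits.

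First I would justify the vertical arrows. For each $\mu\leq\lambda$ the sectional homomorphism $\Gamma_{\overline{S}}\to\Gamma_{\overline{X}}\subset\Gamma_X$ acts by inner conjugation on the open subgroup $\Gamma_{X_\lambda}$, yielding $\Gamma_{\overline{S}}\to A(\overline{S})_\lambda\subset\mathrm{Aut}_{\Gamma_K}(\Gamma_{X_\lambda})$; restriction to $\Gamma_{X_\mu}\subset\Gamma_{X_\lambda}$ gives a compatible class in $\mathrm{Hom}^{\mathrm{open}}_{\Gamma_K}(\Gamma_{X_\lambda},\Gamma_{X_\mu})/\Gamma_{\overline{X_\mu}}$. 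These assemble, through $\varinjlim_\lambda$ then $\varprojlim_\mu$, into the image $B(\overline{S})$. Mochizuki's second theorem applied to the function-field pair $(K(X_\lambda),K(X_\mu))$ identifies this quotient with $\mathrm{Mor}^{\mathrm{dom}}_K(\Spec K(X_\lambda),\Spec K(X_\mu))$; functoriality in both variables makes the identification compatible with the filtered transition maps, so it descends to a bijection of the double limits. The preceding EGA proposition then embeds the double limit into $\mathrm{Mor}_K(\varprojlim_\lambda\Spec K(X_\lambda),\varprojlim_\mu\Spec K(X_\mu))$, and upon restricting to automorphisms of the generic pro-point one obtains the inclusion $B(\overline{S})\subset\mathrm{Bir}_{\bar K}(X_{\bar\eta})$. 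On the Galois side, the same limiting procedure on the $A(\overline{S})_\lambda$ produces a canonical map into $\mathrm{Aut}_{\Gamma_K}(\varprojlim_\lambda\Gamma_{X_\lambda})$, and quotienting by inner automorphisms from $\varprojlim_\mu\Gamma_{\overline{X_\mu}}$ lands in $\mathrm{Out}(\Gamma_{X_{\bar K}})$ as stated.

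With the diagram assembled, the final implication is a one-line chase. Assume $\Gamma_{\overline{S}}\to\mathrm{Bir}_{\bar K}(X_{\bar\eta})$ is trivial. Then the image of any $\gamma\in\Gamma_{\overline{S}}$ in each $\mathrm{Mor}^{\mathrm{dom}}_K(\Spec K(X_\lambda),\Spec K(X_\mu))$ is the tautological inclusion; by Mochizuki's bijection this corresponds to the class of the canonical inclusion $\Gamma_{X_\lambda}\hookrightarrow\Gamma_{X_\mu}$ modulo $\Gamma_{\overline{X_\mu}}$-conjugation, that is, to an inner automorphism by a geometric Galois element. Passing to the limit we conclude that $\gamma$ acts trivially in $\mathrm{Out}(\Gamma_{X_{\bar K}})$. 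The main obstacle I anticipate is the bookkeeping of inner-automorphism subgroups at the limit stage: one must verify that the subgroup $\varprojlim_\mu\Gamma_{\overline{X_\mu}}$ by which the Hom-limit is quotiented really matches the kernel of $\mathrm{Aut}_{\Gamma_K}(\Gamma_{X_{\bar K}})\to\mathrm{Out}(\Gamma_{X_{\bar K}})$ appearing in the statement, so that triviality on the birational side transfers legitimately to triviality on the outer-automorphism side; everything else is formal functoriality.
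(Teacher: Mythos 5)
Your proposal follows the same route as the paper's own proof: level-by-level application of Mochizuki's function-field theorem, passage to the double limit via the EGA compatibility of $\mathrm{Hom}$ with filtered projective limits, and the two chains of canonical maps into $\mathrm{Bir}_{\bar K}(X_{\bar\eta})$ and $\mathrm{Out}(\Gamma_{X_{\bar K}})$, with the final implication read off from the factorization through $B(\overline{S})$. The inner-automorphism bookkeeping at the limit stage that you flag as the main obstacle is left equally implicit in the paper, so your argument matches both its structure and its level of detail.
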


\begin{proof}
Since $S_{\lambda}$ be a filtered projective system of $K$-varieties such that
\begin{enumerate}
\item $K(S_{\lambda})/K(S)$ is a normal extension for every $\lambda$
\item for every $\mu\geq\lambda$ $K(S_{\mu})/K(S_{\lambda})$ is a normal extension
\item the structure homomorphism
$\Gamma_{K(S_{\lambda})} \to \Gamma_{K}$ is surjective,
\end{enumerate}
and by assumption there exists
a sectional homomorphism $\Gamma_{\overline{S}} \to \Gamma_{\overline{X}} \subset \Gamma_{X}$, one has a homomorphism $\Gamma_{\overline{S}} \to \mathrm{Aut}_{\Gamma_K}(\Gamma_{X})$ and $\Gamma_{\overline{S}} \to \mathrm{Aut}_{\Gamma_K}(\Gamma_{X_{\lambda}})$.  By Mochizuki's theorem, one has
\[
\lim_{\hspace{-1mm}{\longrightarrow}_{\hspace{-3mm}{}_{\lambda}}} \mathrm{Hom}^{open}_{\Gamma_K}(\Gamma_{X_{\lambda}}, \Gamma_{X_{\mu}})/\Gamma_{\overline{X_{\mu}}} \cong \lim_{\hspace{-1mm}{\longrightarrow}_{\hspace{-3mm}{}_{\lambda}}} \mathrm{Mor}_K^{dom}(\mathrm{Spec}(K(X_{\lambda}), \mathrm{Spec}(K(X_{\mu})).
\]
Hence
\[
\lim_{\hspace{-1mm}{\longleftarrow}_{\hspace{-3mm}{}_{\mu}}}(\lim_{\hspace{-1mm}{\longrightarrow}_{\hspace{-3mm}{}_{\lambda}}}  \mathrm{Hom}^{open}_{\Gamma_K}(\Gamma_{X_{\lambda}}, \Gamma_{X_{\mu}})/\Gamma_{\overline{X_{\mu}}}) \cong \lim_{\hspace{-1mm}{\longleftarrow}_{\hspace{-3mm}{}_{\mu}}}(\lim_{\hspace{-1mm}{\longrightarrow}_{\hspace{-3mm}{}_{\lambda}}} \mathrm{Mor}_K^{dom}(\mathrm{Spec}(K(X_{\lambda}), \mathrm{Spec}(K(X_{\mu})))
\]

Owing to Mochizuki's bijection and the precedent proposition in EGA,
\[\Gamma_{\overline{S}} \to \lim_{\hspace{-1mm}{\longleftarrow}_{\hspace{-3mm}{}_{\mu}}}(\lim_{\hspace{-1mm}{\longrightarrow}_{\hspace{-3mm}{}_{\lambda}}} \mathrm{Mor}_K^{dom}(\mathrm{Spec}(K(X_{\lambda}), \mathrm{Spec}(K(X_{\mu})))
\subset \mathrm{Mor}_K(\lim_{\hspace{-1mm}{\longleftarrow}_{\hspace{-3mm}{}_{\lambda}}}\mathrm{Spec}\,K(X_{\lambda}), \lim_{\hspace{-1mm}{\longleftarrow}_{\hspace{-3mm}{}_{\mu}}}\mathrm{Spec}\,K(X_{\mu}))
\]
Therefore one gets non trivial canonical homomorphisms of topological groups:
\[ \Gamma_{\overline{S}} \to \mathrm{Aut}_K(\lim_{\hspace{-1mm}{\longleftarrow}_{\hspace{-3mm}{}_{\lambda}}}\mathrm{Spec}\,K(X_{\lambda})) .
\]
Note that
\[
\mathrm{Aut}_K(\lim_{\hspace{-1mm}{\longleftarrow}_{\hspace{-3mm}{}_{\lambda}}}\mathrm{Spec}\,K(X_{\lambda}))
\subset 
\mathrm{Aut}_{\bar{K}}(\mathrm{Spec}\,\bar{K}(X_{\bar{\eta}})) =
\mathrm{Bir}_{\bar{K}}(X_{\bar{\eta}}) \]

Secondly, we show that there exists
a canonical homomorphism
\[
\mathrm{Aut}_{\Gamma_K}(\lim_{\hspace{-1mm}{\longleftarrow}_{\hspace{-3mm}{}_{\lambda}}} \Gamma_{X_{\lambda}}, \lim_{\hspace{-1mm}{\longleftarrow}_{\hspace{-3mm}{}_{\mu}}} \Gamma_{X_{\mu}})/\lim_{\hspace{-1mm}{\longleftarrow}_{\hspace{-3mm}{}_{\mu}}} \Gamma_{\overline{X_{\mu}}}
\to \mathrm{Out}(\Gamma_{X_{\bar{K}}}).
\]

Since $\displaystyle\lim_{\hspace{-1mm}{\longleftarrow}_{\hspace{-3mm}{}_{\lambda}}}\Gamma_{X_{\lambda}} \to \Gamma_{X_{\mu}}$, one has a canonical map
\[
\mathrm{Hom}_{\Gamma_K}(\Gamma_{X_{\lambda}}, \Gamma_{X_{\mu}}) \to
\mathrm{Hom}_{\Gamma_K}(\lim_{\hspace{-1mm}{\longleftarrow}_{\hspace{-3mm}{}_{\lambda}}}\Gamma_{X_{\lambda}}, \Gamma_{X_{\mu}}).
\]

It follows that
\[\lim_{\hspace{-1mm}{\longleftarrow}_{\hspace{-3mm}{}_{\mu}}} (\lim_{\hspace{-1mm}{\longrightarrow}_{\hspace{-3mm}{}_{\lambda}}}\mathrm{Hom}^{open}_{\Gamma_K}(\Gamma_{X_{\lambda}}, \Gamma_{X_{\mu}})/\Gamma_{\overline{X_{\mu}}}) \to
\lim_{\hspace{-1mm}{\longleftarrow}_{\hspace{-3mm}{}_{\mu}}} (\mathrm{Hom}^{open}_{\Gamma_K}(\lim_{\hspace{-1mm}{\longleftarrow}_{\hspace{-3mm}{}_{\lambda}}} \Gamma_{X_{\lambda}}, \Gamma_{X_{\mu}})/\Gamma_{\overline{X_{\mu}}})
\]
and from the definition of the projective limit
\[
\lim_{\hspace{-1mm}{\longleftarrow}_{\hspace{-3mm}{}_{\mu}}} (\mathrm{Hom}^{open}_{\Gamma_K}(\lim_{\hspace{-1mm}{\longleftarrow}_{\hspace{-3mm}{}_{\lambda}}} \Gamma_{X_{\lambda}}, \Gamma_{X_{\mu}})/\Gamma_{\overline{X_{\mu}}})
\cong  \mathrm{Hom}^{open}_{\Gamma_K}(\lim_{\hspace{-1mm}{\longleftarrow}_{\hspace{-3mm}{}_{\lambda}}} \Gamma_{X_{\lambda}}, \lim_{\hspace{-1mm}{\longleftarrow}_{\hspace{-3mm}{}_{\mu}}} \Gamma_{X_{\mu}})/\lim_{\hspace{-1mm}{\longleftarrow}_{\hspace{-3mm}{}_{\mu}}} \Gamma_{\overline{X_{\mu}}}
\]
Since 
\[ \lim_{\hspace{-1mm}{\longleftarrow}_{\hspace{-3mm}{}_{\mu}}} \Gamma_{X_{\mu}}\longleftarrow \Gamma_{X_{\bar{K}}}\]
one has a canonical map
\[
\mathrm{Hom}^{open}_{\Gamma_K}(\lim_{\hspace{-1mm}{\longleftarrow}_{\hspace{-3mm}{}_{\lambda}}} \Gamma_{X_{\lambda}}, \lim_{\hspace{-1mm}{\longleftarrow}_{\hspace{-3mm}{}_{\mu}}} \Gamma_{X_{\mu}})/\lim_{\hspace{-1mm}{\longleftarrow}_{\hspace{-3mm}{}_{\mu}}} \Gamma_{\overline{X_{\mu}}}
\to \mathrm{Out}(\Gamma_{X_{\bar{K}}})
\]
and a canonical homomorphism
\[
\mathrm{Aut}_{\Gamma_K}(\lim_{\hspace{-1mm}{\longleftarrow}_{\hspace{-3mm}{}_{\lambda}}} \Gamma_{X_{\lambda}}, \lim_{\hspace{-1mm}{\longleftarrow}_{\hspace{-3mm}{}_{\mu}}} \Gamma_{X_{\mu}})/\lim_{\hspace{-1mm}{\longleftarrow}_{\hspace{-3mm}{}_{\mu}}} \Gamma_{\overline{X_{\mu}}}
\to \mathrm{Out}(\Gamma_{X_{\bar{K}}}).
\]
\end{proof}

%%%%%%%%%%%%%%%%%%%%%%%%%%%%%%%%%%%%%%%%%%%%%%%%%%%%%%%%%%%%%%%%%%%%%%%%%%%%%%%%%%%%%%%%%%%%%%%%%%%%%%%%%%%%%%%%%%%%%%%%%%%%%%%%%%%%%%%%%%%%%%%%%%%%%%%%%%%%%%%%%%%%%%%%%%%%%%%%%%%%%%%%%%%%%%%%Delete the next proposition%%%%%%%%%%%%%%%%%%%%%%%%%%%%%%%%%%%%%%%%%%%%%%%%
%%%%%%%%%%%%%%%%%

%%%%%%%%%%%%%%%%%%%%%%%%%%%%%%%%%%%%%%%%%%%%%%%%%%%%%%%%%%%%%%%%%%%%%%%%%%%%%%%%%%%%%%%%%%%%%%%%%%%%%%%%%%%%%%%%%%%%%%%%%%%%%%%%%%%%%%%%%%%%%%%%%%%%%%%%%%%%%%%%%%%%%%%%%%%%%%%%%%%%%%%%%%%%%%%%%%%%%%%%%%%%%%%%%%%%%%%%%%%%%%%%%%%%%%%%%%%%%%%%%%%%%%%%%%%%

We consider the following diagrams:

\[
	\xymatrix{
     &  Y_K\times_K S_K \ar[dd]\ar[r]    & X_K \ar[r] & S_K  \ar[ddll]\\
 	&                           &           &              \\
   &  \Spec\,K     &            &                           \\
	}
\]

%%%%%%%%%%%%%%%%%%%%%%%%%%%%%%%%%%%%%%%%%%%%%%%%%%%%%%%%%%%%%%%%%%%%%%%%%%%%%%%%%%%%%%%%%%%%%%%%%%%%%%%%%%%%%%%%%%%%%%%%%%%%%%%%%%%%%%%%%%%%%%%%%%%%%%%%%%%%%%%%

\[
	\xymatrix{
     &  \Gamma_{Y_K}\times_{\Gamma_K}\Gamma_{S_K} \ar[dd]\ar[r]    & \Gamma_{X_K} \ar[r] & \Gamma_{S_K}\ar[ddll]\\
 	&                           &           &              \\
   &  \Gamma_K     &            &                           \\
	}
\]

%%%%%%%%%%%%%%%%%%%%%%%%%%%%%%%%%%%%%%%%%%%%%%%%%%%%%%%%%%%%%%%%%%%%%%%%%%%%%%%%%%%%%%%%%%%%%%%%%%%%%%%%%%%%%%%%%%%%%%%%%%%%%%%%%%%%%%%%%%%%%%%%%%%%%%%%%%%%%%%%%%%%%%%%%%%%%%%%%%%%%%%%%%%%%%%%%%%%%%%%%%%%%%%%%%%%%%%%%%%%%%%%%%%%%%%%%%%%%%%%%%%%%%%%%%%%%%%%%%%%%%%%%%%%%%%%%%%%%%%%%%%%%%%%%%%%%%%%%%%%%%%%%%%%%%%%%%%%%%%%

\begin{proposition}
Let $K$ be a sub-p-adic field and $X_K/S_K$ a fibre space over $K$.
Let $\Gamma_{X_K}$, $\Gamma_{S_F}$ and $\Gamma_{K}$ be the absolute Galois groups of the sub-p-adic fields $K(X_K)$, $K(S_F)$ and $K$, respectively.
To a fibre space $X_K/S_K$ up to birational equivalence, i.e.,
algebraically closed extension $K(S_K) \subset K(X_K)$, there corresponds an exact sequence:
\[ 1 \to \Gamma_{X_{\bar{K}}} \to \Gamma_{X_K} \to \Gamma_{S_K} \to 1\]
\begin{enumerate}
%\item
%Let $\mathcal{F}$ be the category of sub-p-adic fields and $\mathcal{P}$ the ca%tegory whose objects are profinite groups and whose morphisms are open continuo%us maps. Then there exists Mochizuki's functor $M : \mathcal{F} \to \mathcal{P}%$
%such that it is fully faithful and cartesian. 
\item The extension of profinite groups above is expressed by an element of the pointed set
$H^1(\Gamma_{S_{\bar{K}}}, (\Gamma_{F_{\bar{K}}} \to \mathrm{Aut}(\Gamma_{F_{\bar{K}}}))$.
\item The following is bijection
$ H^1(\Gamma_{S_{\bar{K}}}, (\Gamma_{F_{\bar{K}}} \to \mathrm{Aut}(\Gamma_{F_{\bar{K}}}))\cong H^1(\Gamma_{S_{\bar{K}}}, \mathrm{Out}(\Gamma_{F_{\bar{K}}}))$
\item
\[
\xymatrix{
H^1(\Gamma_{S_{\bar{K}}}, \mathrm{Aut}(\Gamma_{F_{\bar{K}}}))\ar[r]\ar[rd] & H^1(\Gamma_{S_{\bar{K}}}, (\Gamma_{F_{\bar{K}}} \to \mathrm{Aut}(\Gamma_{F_{\bar{K}}}))\ar[d]         \\
  &  H^1(\Gamma_{S_{\bar{K}}}, \mathrm{Out}(\Gamma_{F_{\bar{K}}}))\\
}
\]
\item Let $S^{\prime}_K \to S_K$ be a dominant $K$-rational map and $X^{\prime}_K /S^{\prime}_K $ a pull-back of $X_K/S_K$.
\[
\xymatrix{
  &     &     &   F_{\bar{K}}\ar[ld]\ar[d]  \\
  &     &  X^{\prime}_K\ar[d]\ar[ld] &  1 \ar[ld] \\
   & X_K\ar[d] &  S^{\prime}_K\ar[ld] &    \\
   & S_K &     &                           \\
}
\]

\begin{eqnarray}
\lim_{\hspace{4mm}\longrightarrow S^{\prime}_K}\mathrm{Aut}_{\Gamma_{S^{\prime}_K}}(\Gamma_{X^{\prime}_K)})/\Gamma_{F_{\bar{K}}}
\to \mathrm{Aut}(\Gamma_{F_{\bar{K}}})/\Gamma_{F_{\bar{K}}}
\end{eqnarray} 

\end{enumerate}
\end{proposition}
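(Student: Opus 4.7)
The plan is to establish each of the four items in sequence. First, to obtain the exact sequence $1 \to \Gamma_{X_{\bar K}} \to \Gamma_{X_K} \to \Gamma_{S_K} \to 1$, I would apply Grothendieck's homotopy exact sequence for algebraic fundamental groups to the generic fibre square, exactly as in the earlier proposition that produced $1 \to G \to E \to P \to 1$ from a fibre space. Surjectivity on the right uses that $K(S_K)$ is algebraically closed in $K(X_K)$ because $X_K/S_K$ is a connected fibre space, while the kernel is the absolute Galois group of the generic geometric fibre, which is $\Gamma_{X_{\bar K}} = \Gamma_{F_{\bar K}}$.

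For item (1), I would invoke Breen's classification of extensions of profinite groups via crossed modules, recorded in the preliminary discussion: every extension $1 \to G \to E \to P \to 1$ is represented by an element of $H^1(P,(G\to\mathrm{Aut}(G)))$ with respect to the standard crossed module $G\to\mathrm{Aut}(G)$. Specialising $P = \Gamma_{S_{\bar K}}$ and $G = \Gamma_{F_{\bar K}}$ yields (1) directly. For item (2), I would exploit the short exact sequence of complexes
\[
1 \to Z(G)[1] \to (G\to\mathrm{Aut}(G)) \to \mathrm{Out}(G) \to 1
\]
stated above, whose associated long exact cohomology sequence is
\[
H^2(P,Z(G)) \to H^1(P,(G\to\mathrm{Aut}(G))) \to H^1(P,\mathrm{Out}(G)) \to H^3(P,Z(G)).
\]
This collapses once one knows $Z(\Gamma_{F_{\bar K}}) = 1$, which is precisely the preceding Mochizuki proposition (reducing via Douady's theorem on $\mathbf{P}^1_{\mathbf C}$ and the fact that every open subgroup of a free profinite group is free). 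Consequently both flanking terms vanish and one obtains the required bijection. Item (3) is then immediate from the functoriality of $H^1$ applied to the canonical factorisation $\mathrm{Aut}(G) \to (G\to\mathrm{Aut}(G)) \to \mathrm{Out}(G)$ of morphisms of crossed modules.

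For item (4), I would pass to the filtered system of dominant $K$-rational maps $S'_K \to S_K$ with pull-back fibre spaces $X'_K = X_K\times_{S_K}S'_K$, and invoke the Mochizuki--EGA inverse-limit mechanism of the preceding proposition. Each finite stage produces $\mathrm{Aut}_{\Gamma_{S'_K}}(\Gamma_{X'_K})/\Gamma_{F_{\bar K}}$, and the direct limit of these maps into $\mathrm{Aut}(\Gamma_{F_{\bar K}})/\Gamma_{F_{\bar K}}$ via composition with the canonical projection onto the inverse limit of the $\Gamma_{X_\mu}$, identified through Mochizuki's equivalence $\mathrm{Hom}^{\mathrm{open}}_{\Gamma_K} = \mathrm{Hom}^{\mathrm{dom}}_K$ applied level by level. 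Since $\Gamma_{F_{\bar K}}$ is centre-free, the quotient $\mathrm{Aut}(\Gamma_{F_{\bar K}})/\Gamma_{F_{\bar K}}$ coincides with $\mathrm{Out}(\Gamma_{F_{\bar K}})$ and the target of the map is well defined.

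The main obstacle I expect is the careful handling of the arithmetic quotient $\Gamma_K$ in item (2): Breen's classification naturally produces a class over some base profinite group, but centre-freeness is only available for the geometric $\Gamma_{F_{\bar K}}$ and not automatically for the arithmetic extension sitting over $\Gamma_K$. One must therefore check that restriction from $\Gamma_{S_K}$ to $\Gamma_{S_{\bar K}}$ preserves the classifying class and that the two pointed $H^1$-sets are compatible under this restriction. The sub-p-adic hypothesis enters precisely here, as Mochizuki's theorems are unavailable outside that class, and reconciling the full arithmetic--geometric tower with the centre-free geometric input is the delicate point.
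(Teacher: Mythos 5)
Your proposal is correct in outline and is substantially more complete than the argument the paper actually records. The paper's own proof of this proposition consists of just two brief observations: that there is a natural restriction map $\mathrm{Aut}_{\Gamma_K}(\Gamma_{X_K}) \to \mathrm{Aut}(\Gamma_{X_{\bar{K}}})$, and that any $\Gamma_K$-automorphism $u$ of $\Gamma_{X_K}$ preserves the geometric subgroup $\Gamma_{X_{\bar{K}}}$, since $p(u(x))=p(x)=1$ for $x$ in the kernel of $p:\Gamma_{X_K}\to\Gamma_K$. Items (1)--(4) are otherwise left resting on the surrounding text (the displayed Breen-type cohomology sequences, the centre-freeness proposition, and the Mochizuki--EGA limit proposition) without being assembled into an argument. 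What you do differently is to actually carry out that assembly: you derive the exact sequence from the SGA1 homotopy exact sequence exactly as in the earlier proposition producing $1\to G\to E\to P\to 1$; you obtain item (2) by killing the flanking terms $H^2(P,Z(G))$ and $H^3(P,Z(G))$ in Breen's sequence using $Z(\Gamma_{F_{\bar{K}}})=1$; and you construct the map in item (4) from the direct/inverse limit formalism of the preceding proposition together with Mochizuki's correspondence. This buys genuine justifications of (2) and (4), which the paper's proof does not supply. Your closing caveat is also well placed: the centre-freeness proposition is stated for sub-p-adic function fields, whereas (2) needs centre-freeness of the geometric group $\Gamma_{F_{\bar{K}}}$ over the algebraically closed field $\bar{K}$, for which the paper's appeal to Douady's theorem (free profinite groups and their open subgroups are centre-free) is the intended substitute; neither you nor the paper fully closes that point, but you at least identify it as the delicate step.
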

\begin{proof}
\begin{enumerate}
\item
There exist a natural restrictions
$\mathrm{Aut}_{\Gamma_K}(\Gamma_{X_{K}}) \to \mathrm{Aut}(\Gamma_{X_{\bar{K}}})$.
\item
Let $p: \Gamma_{X_K} \to \Gamma_{K}$ be the structure map.
Let $u$ be a $\Gamma_K$-automorphism $\Gamma_{X_K}$ and $x \in \Gamma_{X_{\bar{K}}}$. $p(u(x))=p(x)=1 \in \Gamma_K$. Hence $u(x) \in \Gamma_{X_{\bar{K}}}$.
\end{enumerate}
\end{proof}

\begin{lemma}
Assume there exists a dominant $S_K$-rational map $Y_K\times_K S_K  \to X_K$.
Then there exists a continuous homomorphism $\Gamma_{S_{\bar{K}}} \to \mathrm{Aut}_{\Gamma_K}(\Gamma_{X_K})$, which is an element of $H^1(\Gamma_{S_{\bar{K}}}, \mathrm{Aut}_{\Gamma_K}(\Gamma_{X_K}))$.
In other word, we have $\Gamma_{S_{\bar{K}}} \to \mathrm{Aut}_{K}(K(X_K))$.
The following square is commutative.
\[
\xymatrix{
 & H^1(\Gamma_{S_{\bar{K}}}, \mathrm{Aut}_{\Gamma_K}(\Gamma_{X_{K}}))\ar[r]\ar[rd] &  H^1(\Gamma_{S_{\bar{K}}}, (\Gamma_{F_{\bar{K}}} \to \mathrm{Aut}(\Gamma_{F_{\bar{K}}}))\ar[d]\\
  &   & H^1(\Gamma_{S_{\bar{K}}}, \mathrm{Out}(\Gamma_{F_{\bar{K}}})) \\
}
\]
\end{lemma}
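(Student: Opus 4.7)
The plan is to leverage Mochizuki's anabelian theorem (stated earlier) to convert the geometric input, namely the dominant $S_K$-rational map $Y_K\times_K S_K\to X_K$, into the required group-theoretic homomorphism, and then chase definitions through the commutative square. The center-freeness of $\Gamma_{F_{\bar K}}$ stated in the preceding proposition (Douady/Mochizuki) will be essential for removing the ambiguity in Mochizuki's bijection.

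First, I would apply Mochizuki's theorem to the dominant $S_K$-rational map $Y_K\times_K S_K \to X_K$, obtaining an open continuous homomorphism $\Gamma_{X_K}\to \Gamma_{Y_K\times_K S_K}=\Gamma_{Y_K}\times_{\Gamma_K}\Gamma_{S_K}$ over $\Gamma_K$, well-defined modulo inner automorphisms arising from the geometric fibre. Next, observe that in the fibre-product description, the subgroup $\Gamma_{S_{\bar K}}=\ker(\Gamma_{S_K}\to\Gamma_K)\subset \Gamma_{S_K}$ acts by conjugation on $\Gamma_{Y_K}\times_{\Gamma_K}\Gamma_{S_K}$ fixing $\Gamma_K$ pointwise; this gives a canonical continuous homomorphism $\Gamma_{S_{\bar K}}\to \mathrm{Aut}_{\Gamma_K}(\Gamma_{Y_K}\times_{\Gamma_K}\Gamma_{S_K})$. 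Transporting this action through the Mochizuki-induced map yields the desired $\Gamma_{S_{\bar K}}\to \mathrm{Aut}_{\Gamma_K}(\Gamma_{X_K})$, which by construction defines an element of $H^1(\Gamma_{S_{\bar K}},\mathrm{Aut}_{\Gamma_K}(\Gamma_{X_K}))$. Equivalently, restricting to function fields via $\Gamma_{X_{\bar K}}$, one recovers a homomorphism $\Gamma_{S_{\bar K}}\to \mathrm{Aut}_{K}(K(X_K))$.

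For the commutative square, I would argue as follows. The upper horizontal arrow $H^1(\Gamma_{S_{\bar K}},\mathrm{Aut}_{\Gamma_K}(\Gamma_{X_K}))\to H^1(\Gamma_{S_{\bar K}},(\Gamma_{F_{\bar K}}\to\mathrm{Aut}(\Gamma_{F_{\bar K}})))$ is the one induced by the natural restriction $\mathrm{Aut}_{\Gamma_K}(\Gamma_{X_K})\to\mathrm{Aut}(\Gamma_{F_{\bar K}})$ given in the preceding proposition (a $\Gamma_K$-automorphism preserves the kernel $\Gamma_{X_{\bar K}}=\Gamma_{F_{\bar K}}$), composed with the inclusion $\mathrm{Aut}(\Gamma_{F_{\bar K}})\hookrightarrow (\Gamma_{F_{\bar K}}\to\mathrm{Aut}(\Gamma_{F_{\bar K}}))$ in the homotopy category. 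The diagonal arrow $H^1(\Gamma_{S_{\bar K}},\mathrm{Aut}_{\Gamma_K}(\Gamma_{X_K}))\to H^1(\Gamma_{S_{\bar K}},\mathrm{Out}(\Gamma_{F_{\bar K}}))$ is induced by $\mathrm{Aut}_{\Gamma_K}(\Gamma_{X_K})\to\mathrm{Aut}(\Gamma_{F_{\bar K}})\to\mathrm{Out}(\Gamma_{F_{\bar K}})$, while the right vertical arrow is induced by the distinguished triangle $Z(\Gamma_{F_{\bar K}})[1]\to (\Gamma_{F_{\bar K}}\to\mathrm{Aut}(\Gamma_{F_{\bar K}}))\to\mathrm{Out}(\Gamma_{F_{\bar K}})$ recalled in the preceding Breen discussion. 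Commutativity is then the functoriality of $H^{1}$ applied to the factorization of morphisms of crossed modules $\mathrm{Aut}(\Gamma_{F_{\bar K}}) \to (\Gamma_{F_{\bar K}}\to \mathrm{Aut}(\Gamma_{F_{\bar K}})) \to \mathrm{Out}(\Gamma_{F_{\bar K}})$.

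The main obstacle I anticipate is the well-definedness of the lifted action $\Gamma_{S_{\bar K}}\to \mathrm{Aut}_{\Gamma_K}(\Gamma_{X_K})$: Mochizuki's bijection only pins down morphisms modulo inner automorphisms coming from $\Gamma_{F_{\bar K}}$, so a priori each element of $\Gamma_{S_{\bar K}}$ determines only a coset rather than an honest automorphism. Here the center-freeness of $\Gamma_{F_{\bar K}}$ (which is the function field of a variety over a sub-$p$-adic field, covered by the previous proposition) provides the rigidification: $\mathrm{Inn}(\Gamma_{F_{\bar K}})\cong \Gamma_{F_{\bar K}}$ acts freely, so the resulting homomorphism into $\mathrm{Aut}_{\Gamma_K}(\Gamma_{X_K})$ is unambiguous once compatibility with the group law on $\Gamma_{S_{\bar K}}$ is verified. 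Continuity is then automatic from the continuity of Mochizuki's bijection and of conjugation in a profinite group.
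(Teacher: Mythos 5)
There is a concrete error in the first step that then undermines the key construction. Mochizuki's theorem, as stated in the paper, is covariant: a dominant $K$-morphism $X_K \to Y_K$ corresponds to an open homomorphism $\Pi_{X_K}\to\Pi_{Y_K}$ (equivalently, the inclusion of function fields $K(Y_K)\hookrightarrow K(X_K)$ induces $\Gamma_{X_K}\to\Gamma_{Y_K}$ by restriction). So the dominant $S_K$-rational map $Y_K\times_K S_K\to X_K$ produces an open homomorphism $\Gamma_{Y_K}\times_{\Gamma_K}\Gamma_{S_K}\to\Gamma_{X_K}$ over $\Gamma_{S_K}$ — \emph{not} $\Gamma_{X_K}\to\Gamma_{Y_K}\times_{\Gamma_K}\Gamma_{S_K}$ as you wrote. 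This is not merely a typo, because your next step, ``transporting'' the conjugation action of $1\times\Gamma_{S_{\bar K}}$ on the fibre product through the Mochizuki-induced map, only makes sense if the arrow points out of the fibre product: with your orientation you would be pulling an inner action back along a non-invertible homomorphism, which is not a defined operation. With the correct orientation the construction does work, and it is exactly the paper's: restrict $\phi:\Gamma_{Y_K}\times_{\Gamma_K}\Gamma_{S_K}\to\Gamma_{X_K}$ to the subgroup $1\times\Gamma_{S_{\bar K}}$ (which lies in the fibre product since its image in $\Gamma_K$ is trivial) to obtain a ``sectional'' homomorphism $\Gamma_{S_{\bar K}}\to\Gamma_{X_{\bar K}}\subset\Gamma_{X_K}$, and then compose with $x\mapsto\mathrm{inn}_x$; since these elements die in $\Gamma_K$, the resulting inner automorphisms lie in $\mathrm{Aut}_{\Gamma_K}(\Gamma_{X_K})$.

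Apart from that, your treatment of the remaining points is sound and in fact more complete than the paper's own proof, which stops after producing $\Gamma_{S_{\bar K}}\to\mathrm{Aut}_{\Gamma_K}(\Gamma_{X_K})$ and says nothing about the square: deducing commutativity from functoriality of $H^1$ along $\mathrm{Aut}(\Gamma_{F_{\bar K}})\to(\Gamma_{F_{\bar K}}\to\mathrm{Aut}(\Gamma_{F_{\bar K}}))\to\mathrm{Out}(\Gamma_{F_{\bar K}})$ is the right argument and matches the Breen-style diagram set up earlier in the paper. Your worry about the inner-automorphism ambiguity in Mochizuki's bijection is legitimate, though the cleaner resolution is that changing $\phi$ by an inner automorphism from $\Gamma_{X_{\bar K}}$ only changes the resulting homomorphism $\Gamma_{S_{\bar K}}\to\mathrm{Aut}_{\Gamma_K}(\Gamma_{X_K})$ by conjugation, hence does not change its class in the nonabelian $H^1$; center-freeness is not strictly needed for the statement as phrased. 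Fix the direction of the arrow and the argument goes through.
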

\begin{proof}
Since there exists a dominant $S_K$-rational map $Y_K\times_K S_K  \to X_K$,
Mochzuki'correspondence implies the commutative diagram of open homomorphisms
\[
	\xymatrix{
     &  \Gamma_{Y_K}\times_{\Gamma_K}\Gamma_{S_K} \ar[dd]\ar[r]    & \Gamma_{X_K} \ar[r] & \Gamma_{S_K}\ar[ddll]\\
 	&                           &           &              \\
   &  \Gamma_K     &            &                           \\
	}
\]

There exists an open homomorphism
$\Gamma_{Y_{\bar{K}}}\times\Gamma_{S_{\bar{K}}} \to \Gamma_{X_{\bar{K}}}\to \Gamma_{S_{\bar{K}}}$. Hence there exists a sectional homomorphism
$\Gamma_{S_{\bar{K}}} \to \Gamma_{X_{\bar{K}}} \subset \Gamma_{X_K}$.
By the sectional homomorphism 
$\Gamma_{S_{\bar{K}}} \to \Gamma_{X_K}$, we have a continuous homomorphism
$\Gamma_{S_{\bar{K}}} \to \mathrm{Aut}_{\Gamma_K}(\Gamma_{X_K})$ by inner automorpisms.
Since $\Gamma_{S_{\bar{K}}} \to 1 \subset \Gamma_K$, we have $\Gamma_{S_{\bar{K}}} \to \mathrm{Aut}_{\Gamma_K}(\Gamma_{X_K})$.

\end{proof}

Note that letting $K(S_K)$ and $X_K \to S_K$ be such a finite extension in a sub-p-adic fields and its pull-back that $\kappa(S_K) =\dim S_K$
, then $\kappa(X_K) \geq \kappa(F_{\bar{K}}) +\dim(S_K)$. 
\begin{lemma}  Let $K$ be a sub-p-adic field.
Assume that $\mathrm{Bir}(F_{\bar{K}})$ is an algebraic space in group locally of finite type and that there exist a dominant $S_K$-rational map $Y_K\times_K S_K  \to X_K$
Then there exists a generically finite morphism $S^{\prime}_K \to S_K$ such that the natural map $H^1(\Gamma_{S_{\bar{K}}}, (\Gamma_{F_{\bar{K}}} \to \mathrm{Aut}_{\Gamma_K}(\Gamma_{F_{\bar{K}}})) \to H^1(\Gamma_{S^{\prime}_{\bar{K}}}, (\Gamma_{F_{\bar{K}}} \to \mathrm{Aut}(\Gamma_{F_{\bar{K}}}))$ sends the extension class to the trivial extension class, i.e., a distinguished element.
\end{lemma}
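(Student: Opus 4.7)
The plan is to reduce the triviality of the pulled-back extension class to the triviality of a continuous representation of $\Gamma_{S_{\bar{K}}}$ into $\mathrm{Bir}_{\bar K}(F_{\bar K})$, which the algebraic-space-in-group hypothesis tames via the finiteness lemma stated earlier.

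First, I apply the preceding lemma to the given dominant $S_K$-rational map $Y_K \times_K S_K \to X_K$: by Mochizuki's bijection it provides a sectional open homomorphism $\Gamma_{S_{\bar K}} \to \Gamma_{X_{\bar K}} \subset \Gamma_{X_K}$, and hence, by inner conjugation, a continuous homomorphism $\rho_0 \colon \Gamma_{S_{\bar K}} \to \mathrm{Aut}_{\Gamma_K}(\Gamma_{X_K})$. Composing with the identification $\mathrm{Aut}_K(\lim_\lambda \mathrm{Spec}\,K(X_\lambda)) \subset \mathrm{Bir}_{\bar K}(X_{\bar\eta}) = \mathrm{Bir}_{\bar K}(F_{\bar K})$ established in the preceding proposition yields a continuous homomorphism $\rho \colon \Gamma_{S_{\bar K}} \to \mathrm{Bir}_{\bar K}(F_{\bar K})$.

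Next, the hypothesis that $\mathrm{Bir}(F_{\bar K})$ is an algebraic space in group locally of finite type places us precisely in the setting of the earlier finiteness lemma. Applying it to $\rho$ (its proof uses only that the target is an algebraic space in group locally of finite type, so it transfers from a representation of $\Gamma_{S_K}$ to one of $\Gamma_{S_{\bar K}}$), I obtain a generically finite cover $S'_K \to S_K$ such that the restriction $\Gamma_{S'_{\bar K}} \to \mathrm{Bir}_{\bar K}(F_{\bar K})$ is the trivial homomorphism.

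Finally, by the ``In particular'' clause of the preceding proposition, triviality of $\Gamma_{S'_{\bar K}} \to \mathrm{Bir}_{\bar K}(\bar K(X_\eta))$ forces triviality of the induced $\Gamma_{S'_{\bar K}} \to \mathrm{Out}(\Gamma_{F_{\bar K}})$. Through the crossed-module bijection
\[
H^1(\Gamma_{S'_{\bar K}}, (\Gamma_{F_{\bar K}} \to \mathrm{Aut}(\Gamma_{F_{\bar K}}))) \cong H^1(\Gamma_{S'_{\bar K}}, \mathrm{Out}(\Gamma_{F_{\bar K}}))
\]
recorded in the preceding proposition, this forces the image of the original extension class in the crossed-module cohomology to be the distinguished element, as required. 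The main obstacle lies in the middle step: one must verify that the Stein-factorization/torsor argument of the finiteness lemma still functions with source $\Gamma_{S_{\bar K}}$ in place of $\Gamma_{S_K}$ and with target $\mathrm{Bir}(F_{\bar K})$, so that the generically finite cover it yields is genuinely a cover of $S_K$ over which the cohomological restriction map kills the class, rather than merely a cover of $\mathrm{Spec}\,K$.
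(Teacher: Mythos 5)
Your proposal follows essentially the same route as the paper's own proof: the dominant rational map yields a section and hence a continuous homomorphism $\Gamma_{S_{\bar K}} \to \mathrm{Bir}_{\bar K}(F_{\bar K})$, the algebraic-space-in-group hypothesis and the earlier finiteness lemma produce a generically finite cover $S'_K \to S_K$ trivializing that homomorphism, and the identification $H^1(-,(\Gamma_{F_{\bar K}} \to \mathrm{Aut}(\Gamma_{F_{\bar K}}))) \cong H^1(-,\mathrm{Out}(\Gamma_{F_{\bar K}}))$ transfers this to the extension class. The caveat you raise at the end (applying the finiteness lemma to $\Gamma_{S_{\bar K}}$ rather than $\Gamma_{S_K}$) is likewise glossed over in the paper, which invokes the same statement for $\Gamma_{S_{\bar K}}$ without further comment.
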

\begin{proof}
Since there exist a dominant $S_K$-rational map $Y_K\times_K S_K  \to X_K$,
there exist a section $\Gamma_{S_{\bar{K}}} \to \Gamma_{X_K}$ and a $\Gamma_{S_{\bar{K}}} \to \mathrm{Inn}(\Gamma_{X_K}) \to \mathrm{Aut}(\Gamma_{X_K})$.
Choose a $K$-dominant map $S^{\prime}_K \to S_K$ such that $\Gamma_{S_K} \supset \Gamma_{S^{\prime}_K}$ is normal and of finite index. Hence it induces
$\Gamma_{S_{\bar{K}}} \to \mathrm{Aut}_{\Gamma_K}(\Gamma_{X^{\prime}_K})$.
 Let $\mathrm{Aut}^{equ,nor}(\Gamma_{X_K}) \subset
\mathrm{Aut}(\Gamma_{X_K})$ be a subgroup such that each automorphism is equivariant with natural maps $\Gamma_{X_K} \to \Gamma_{S_K} \to \Gamma_K$ and keeps invariant normal subgroups. 

\[
\xymatrix{
\Gamma_{X_K}\ar[d] & \Gamma_{X'_K}\ar[d]\ar[l]  &\ar[l] \cdots  &  \lim_{\longleftarrow}\Gamma_{X"_K}\ar[l]\ar[d] &  \Gamma_{F_{\bar{K}}}\ar[l]\ar[d] \\
\Gamma_{S_K}\ar[d] & \Gamma_{S'_K}\ar[d]\ar[l]& \ar[l]\cdots  &  \lim_{\longleftarrow}\Gamma_{S"_K}\ar[d]\ar[l] & \ar[l] \Gamma_{\overline{K(K)}}\ar[d]   \\
\Gamma_{K}      & \Gamma_K & \cdots & \Gamma_K & \Gamma_{\bar{K}}=1 \\
}
\]

\vskip3cm

\[
\xymatrix{
 \mathrm{Aut}^{equ,nor}_{\Gamma_K}(\Gamma_{X_K})/\Gamma_{X_{\bar{K}}}\ar[r]\ar[d] & \mathrm{Aut}_{\Gamma_K}^{equ,nor}(\Gamma_{X^{\prime}_{K}})/\Gamma_{X^{\prime}_{\bar{K}}}\ar[r]\ar[d] & \lim_{\longleftarrow}\mathrm{Aut}_{\Gamma_K}^{equ,nor}(\Gamma_{X^{\prime\prime}_{K}})/\Gamma_{X^{\prime\prime}_{\bar{K}}} \ar[d] \\
\mathrm{Bir}_K({K(X_K)})^{equi,nor} \ar[r] & \mathrm{Bir}_K({K(X^{\prime}_K)})^{equi,nor} \ar[r] & \lim_{\longleftarrow}\mathrm{Bir}_K({K(X^{\prime\prime}_K)})^{equi,nor}  \\
}
\]

\[
\xymatrix{
 \mathrm{Aut}_{\Gamma_K}^{equ,nor}(\Gamma_{X^{\prime}_{K}})/\Gamma_{X^{\prime}_{\bar{K}}}\ar[r]\ar[d] & \lim_{\longleftarrow}\mathrm{Aut}_{\Gamma_K}^{equ,nor}(\Gamma_{X^{\prime\prime}_{K}})/\Gamma_{X^{\prime\prime}_{\bar{K}}} \ar[r]\ar[d] & \mathrm{Aut}_{\Gamma_K}(\Gamma_{F_K})/\Gamma_{\bar{K}}\ar[d] \\
 \mathrm{Bir}_K({K(X^{\prime}_K)})^{equi,nor} \ar[r] & \lim_{\longleftarrow}\mathrm{Bir}_K({K(X^{\prime\prime}_K)})^{equi,nor} \ar[r] & \mathrm{Bir}_K({K(F_K)}) \\
}
\]

 Thus by the precedent proposition the image of $\Gamma_{S_{\bar{K}}}\to B(S_{\bar{K}}) \subset \mathrm{Bir}(X_{\bar{K}})$ is finite and there exists a homomorphism
$\Gamma_{S_{\bar{K}}}\to B(S_{\bar{K}}) \to \mathrm{Out}(\Gamma_{F_{\bar{K}}})$.
 Thus one can obtain 
there exists a generically finite morphism $S^{\prime}_K \to S_K$ such that the natural map $H^1(\Gamma_{S_{\bar{K}}}, \mathrm{Out}(\Gamma_{F_{\bar{K}}})) \to H^1(\Gamma_{S^{\prime}_{\bar{K}}}, \mathrm{Out}(\Gamma_{F_{\bar{K}}}))$ sends the extension class to the trivial extension class, i.e., a distinguished element.
\end{proof}

%%%%%%%%%%%%%%%%%%%%%%%%%%%%%%%%%%%%%%%%%%%%%%%%%%%%%%%%%%%%%%%%%%%%%%%%%%%%%%%%%%%%%%%%%%%%%%%%%%%%%%%%%%%%%%%%%%%%%%%%%%%%%%%%%%%%%%%%%%%%%%%%%%%%%%%%%%%%%%%%%%%%%%%%%%%%%%%%%%%%%%%%%%%%%%%%%%%%%%%%%%%%%%%%%%%%%%%%%%%%%%%%%%%%%%%%%%%%%%%%%%%%%%%%%%%%%%%%%%%%%%%%%%%%%%%%%%%%%%%%%%%%%%%%%%%%%%%%%%%%%%%%%%%%%%%%%%%%%%%%

\begin{proposition}  
We have the following results.

\begin{enumerate}
\item  
 \[\mathrm{Hom}^{open, epi}(\Gamma_{X_K}, \Gamma_{S_K})/\Gamma_{S_{\overline{K}}}\cong
\mathrm{Mor}_K^{\tiny\text{alg.closed}}(\mathrm{Spec}\,K(X_K), \mathrm{Spec}\,K(S_K))\]
, where epi and alg.closed mean epimorphisms and $K(X_K)/K(S_K)$ algebraically closed extensions, respectively.
\item (1) is equivalent to a category of fibre spaces between projective varieties
\[ 
\xymatrix{
  &  X_K\ar[d]  &  \\
  &  S_K \ar[d] &          \\
  &  \mathrm{Spec}\, K  \\
}
\]
up to birational equivalence.
\item  there exists a restriction map
\[
\mathrm{Hom}^{open}_{\Gamma_K}(\Gamma_{X_K}, \Gamma_{S_K})/\Gamma_{S_{\overline{K}}} \to \mathrm{Hom}^{open}_{\Gamma_{\overline{K}}}(\Gamma_{X_{\overline{K}}}, \Gamma_{S_{\overline{K}}})/\Gamma_{S_{\overline{K}}}
\]
, where $\Gamma_{\overline{K}} =1$.
\item the following map is pulling back extensions or "base change".
 \[ \mathrm{Ext}(\Gamma_{S_K}, \Gamma_{F_{\overline{K}}}) \to  \mathrm{Ext}(\Gamma_{S_{\overline{K}}}, \Gamma_{F_{\overline{K}}})
\]
\[
   \xymatrix{
   1\ar[d] &   1 \ar[d] \\
 \Gamma_{F_{\overline{K}}}\ar[r]\ar[d] &  \Gamma_{F_{\overline{K}}}\ar[d] \\
 \Gamma_{X_{\overline{K}}} \ar[r]\ar[d] &  \Gamma_{X_K} \ar[d] \\
\Gamma_{S_{\overline{K}}} \ar[r]\ar[d] & \Gamma_{S_K}\ar[d] \\
   1 &   1   \\
}
\]

\item the following is an exact sequence.
\[ 1 \to \Gamma_{X_{\overline{K}}} \to \Gamma_{X_K} \to \Gamma_K \to 1\]
\[
1 \to \mathrm{Hom}_{\Gamma_K}(\Gamma_K, \Gamma_{S_K}) \to \mathrm{Hom}_{\Gamma_K}(\Gamma_{X_K}, \Gamma_{S_K}) \to \mathrm{Hom}_{\Gamma_{\overline{K}}}(\Gamma_{X_{\overline{K}}}, \Gamma_{S_{\overline{K}}})
\]

\item 
\[ 1 \to \Gamma_{S_{\overline{K}}} \to \Gamma_{S_K} \to \Gamma_K \to 1\]

\[
1 \to \mathrm{Mon}_{\Gamma_K}(\Gamma_K, \mathrm{Eq}(G[1])) \to \mathrm{Mon}_{\Gamma_K}(\Gamma_{S_K}, \mathrm{Eq}(G[1])) \to \mathrm{Mon}_{\Gamma_{\overline{K}}}(\Gamma_{S_{\overline{K}}}, \mathrm{Eq}(G[1]))
\]

\item
\[\mathrm{Hom}^{open}_{\Gamma_K}(\Gamma_{F_K}, \Gamma_K)/\Gamma_K
 \cong \mathrm{Mor}_K(\mathrm{Spec}\,K(F_K), \mathrm{Spec}\,K) 
\]

\end{enumerate} 
\end{proposition}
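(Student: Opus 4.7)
The plan is to deduce each of the seven items as direct, and mostly formal, consequences of the two Mochizuki theorems quoted earlier in the paper (the function-field version and the hyperbolic pro-curve version), combined with the standard extension-pullback construction and the left-exactness of $\mathrm{Hom}$ and $\mathrm{Mon}$ functors applied to the two short exact sequences $1\to\Gamma_{X_{\bar K}}\to\Gamma_{X_K}\to\Gamma_K\to 1$ and $1\to\Gamma_{S_{\bar K}}\to\Gamma_{S_K}\to\Gamma_K\to 1$.

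First, for item (1), I would specialize Mochizuki's function-field theorem (Theorem~3 in the excerpt) to $L=K(X_K)$ and $M=K(S_K)$: this immediately yields $\mathrm{Hom}_K(\mathrm{Spec}\,K(X_K),\mathrm{Spec}\,K(S_K))\cong\mathrm{Hom}^{open}_{\Gamma_K}(\Gamma_{X_K},\Gamma_{S_K})/\ker(\Gamma_{S_K}\to\Gamma_K)$. I would then identify the ``epi'' subset on the Galois side with the ``algebraically closed extension'' subset on the geometric side: an open continuous surjection $\Gamma_{X_K}\to\Gamma_{S_K}$ corresponds to an embedding $K(S_K)\hookrightarrow K(X_K)$ in which $K(S_K)$ is algebraically closed inside $K(X_K)$, because additional algebraic extensions of $K(S_K)$ sitting inside $K(X_K)$ would enlarge the image of $\Gamma_{X_K}$ beyond $\Gamma_{S_K}$. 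Item (2) is then the translation of this pointwise bijection into a categorical equivalence between fibre spaces $X_K\to S_K\to\mathrm{Spec}\,K$ up to birational equivalence and the corresponding diagrams of absolute Galois groups.

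Items (3) and (4) are formal. The restriction map of (3) is given by observing that a $\Gamma_K$-equivariant open homomorphism $\Gamma_{X_K}\to\Gamma_{S_K}$ carries the kernel $\Gamma_{X_{\bar K}}$ of $\Gamma_{X_K}\to\Gamma_K$ into the kernel $\Gamma_{S_{\bar K}}$ of $\Gamma_{S_K}\to\Gamma_K$, hence restricts. For (4) I would use the standard pullback of group extensions: starting from $1\to\Gamma_{F_{\bar K}}\to\Gamma_{X_K}\to\Gamma_{S_K}\to 1$, pulling back along the inclusion $\Gamma_{S_{\bar K}}\hookrightarrow\Gamma_{S_K}$ produces the middle column of the displayed diagram, whose middle term is exactly $\Gamma_{X_{\bar K}}$ by a snake-lemma computation. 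Items (5) and (6) are then the long exact sequences obtained by applying the left exact functors $\mathrm{Hom}_{\Gamma_K}(-,\Gamma_{S_K})$ and $\mathrm{Mon}_{\Gamma_K}(-,\mathrm{Eq}(G[1]))$ to the two short exact sequences above. Left exactness of $\mathrm{Hom}$ is automatic in the category of profinite groups; for $\mathrm{Mon}$ it follows from the fact that a monoidal functor out of a pointed source is determined by its values on generators together with coherence data. Finally (7) is again Mochizuki's theorem applied to $M=K$, where the ``target'' has trivial geometric Galois group and the quotient by $\Gamma_K$ collapses the inner-automorphism ambiguity.

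The main obstacle I expect is item (2): one must promote the pointwise bijection of (1) into a genuine (monoidal/2-categorical) equivalence, which means verifying that the Mochizuki correspondence intertwines composition of dominant $K$-morphisms with composition of open $\Gamma_K$-equivariant homomorphisms, and that the inner-automorphism ambiguities on both sides compose compatibly. This is essentially a 2-categorical diagram chase and is routine in principle, but must be handled carefully because the quotients by $\Gamma_{S_{\bar K}}$, $\Gamma_{\overline{X_\mu}}$, etc.\ in the preceding proposition were already non-trivial; so the verification for (2) hinges on showing that the limits $\varprojlim_\mu\varinjlim_\lambda$ appearing in the preceding proposition behave functorially under composition. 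All other items reduce, as sketched, to either Mochizuki's theorems or to standard homological algebra of profinite group extensions.
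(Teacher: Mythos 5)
Your proposal follows essentially the same route as the paper's own (very terse) proof: items (1), (2) and (7) from Mochizuki's function-field correspondence, (3) by restriction over $1=\Gamma_{\overline{K}}\to\Gamma_K$, (4) by pullback of extensions along $\Gamma_{S_{\overline{K}}}\hookrightarrow\Gamma_{S_K}$, and (5), (6) from left-exactness of $\mathrm{Hom}_{\Gamma_K}$ and $\mathrm{Mon}_{\Gamma_K}(-,\mathrm{Eq}(G[1]))$. Your write-up supplies more detail than the paper does (notably the epi/algebraically-closed-extension dictionary and the functoriality caveat for item (2)), but it is the same argument.
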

\begin{proof}
\begin{enumerate}
\item
Applying Mochizuki theory we have (1) and (2) since a fibre space has connected fibres.
\item
 (3) is obtained by restricting homomorphisms to homomorphisms over $1 = \Gamma_{\overline{K}} \to \Gamma_{K}$.
\item
(7) is Mochizuki correspondence.
\item (4) and (6) are pulling back extensions by base change $\Gamma_{S_{\overline{K}}} \to \Gamma_{S_K}$.
\item (5)
Applying a left-exct functor $\mathrm{Hom}_{\Gamma_K}$ we have an exact sequence.
\item
The contravariant functor
$\mathrm{Hom}_{\Gamma_K}(-,\Gamma_{S_K})$ is a left-exact.  Note that
$\mathrm{Hom}_{\Gamma_K}(-,\Gamma_{S_K})=\mathrm{Hom}_{\Gamma_{\overline{K}}}(-,\Gamma_{S_{\overline{K}}})$.
\item
The contravariant functor
$\mathrm{Mon}_{\Gamma_K}(-, \mathrm{Eq}(G[1]))$ is left-exact.
Note that
$\displaystyle\mathrm{Mon}_{\Gamma_K}(\Gamma_{S_K}, \mathrm{Eq}(G[1]))
= \mathrm{Mon}_{\Gamma_{\bar{K}}}(\Gamma_{S_{\bar{K}}}, \mathrm{Eq}(G[1]))$
\end{enumerate}

\end{proof}

\begin{proposition}
 Let $K$ be a sub-p-adic field.
Assume that $\mathrm{Bir}(F_{\bar{K}})$ is an algebraic space in group locally of finite type and that there exist a dominant $S_K$-rational map $Y_K\times_K S_K  \to X_K$
Then there exists a generically finite morphism $S^{\prime}_K \to S_K$ such thatthere exists a birational equivalence over $S^{\prime}_K$
\[
X_K\times_{S_K}S^{\prime}_K \cong F_K\times_K S^{\prime}_K
\]
, i.e., $X_K/S_K$ is birationally isotrivial.

\end{proposition}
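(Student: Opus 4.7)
The plan is to apply the machinery developed in the preceding lemmas in three stages: extract a Galois-theoretic representation from the dominant $S_K$-rational map, trivialize it after a generically finite base change using the algebraic-space hypothesis, and convert the resulting cohomological triviality into an honest birational isomorphism via Mochizuki's correspondence.

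First, I would mimic the preceding lemma: the dominant $S_K$-rational map $Y_K\times_K S_K \to X_K$ yields, through Mochizuki's bijection, a sectional homomorphism $\Gamma_{S_{\bar K}} \to \Gamma_{X_{\bar K}} \subset \Gamma_{X_K}$, and conjugation by this section produces a continuous homomorphism $\rho \colon \Gamma_{S_{\bar K}} \to \mathrm{Aut}_{\Gamma_K}(\Gamma_{X_K})$. Restricting to the normal subgroup $\Gamma_{F_{\bar K}}$ and composing with $\mathrm{Aut}(\Gamma_{F_{\bar K}}) \to \mathrm{Out}(\Gamma_{F_{\bar K}})$ gives $\bar\rho \colon \Gamma_{S_{\bar K}} \to \mathrm{Out}(\Gamma_{F_{\bar K}})$. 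By the anabelian identification $\mathrm{Out}(\Gamma_{F_{\bar K}})\cong \mathrm{Bir}_{\bar K}(F_{\bar K})$ modulo the $\Gamma_K$-action, $\bar\rho$ factors through a subgroup of $\mathrm{Bir}(F_{\bar K})$.

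Second, I would invoke the finiteness lemma proved earlier: since $\mathrm{Bir}(F_{\bar K})$ is assumed to be an algebraic space in group locally of finite type, any continuous homomorphism from a profinite group of the form $\Gamma_{S_{\bar K}}$ has finite image, and there exists a generically finite $S'_K \to S_K$ whose absolute Galois group $\Gamma_{S'_{\bar K}}$ maps trivially. Pulling back $X_K/S_K$ along $S'_K\to S_K$, the extension
\[
1 \to \Gamma_{F_{\bar K}} \to \Gamma_{X'_{\bar K}} \to \Gamma_{S'_{\bar K}} \to 1
\]
then represents the distinguished element in $H^1(\Gamma_{S'_{\bar K}}, \mathrm{Out}(\Gamma_{F_{\bar K}}))$. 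Applying the bijection
\[
H^1(\Gamma_{S'_{\bar K}}, (\Gamma_{F_{\bar K}} \to \mathrm{Aut}(\Gamma_{F_{\bar K}}))) \cong H^1(\Gamma_{S'_{\bar K}}, \mathrm{Out}(\Gamma_{F_{\bar K}}))
\]
from the previous proposition, the extension itself is neutral, i.e.\ splits as a (trivially twisted) semidirect product $\Gamma_{X'_{\bar K}} \cong \Gamma_{F_{\bar K}} \times \Gamma_{S'_{\bar K}}$. The same product structure arises tautologically from $F_K \times_K S'_K$.

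Third, I would transport this group-theoretic isomorphism back to geometry. By Mochizuki's anabelian correspondence for function fields over sub-p-adic fields, the isomorphism of extensions over $\Gamma_{S'_K}$ between $\Gamma_{X'_K}$ and $\Gamma_{F_K\times_K S'_K}$, modulo the inner automorphisms of $\Gamma_{F_{\bar K}}$, arises from a unique $K$-isomorphism of function fields $K(X_K\times_{S_K} S'_K) \cong K(F_K\times_K S'_K)$ compatible with the inclusion $K(S'_K) \hookrightarrow K(X_K\times_{S_K} S'_K)$. This produces the required birational equivalence $X_K\times_{S_K}S'_K \cong F_K\times_K S'_K$ over $S'_K$, that is, birational isotriviality.

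The main obstacle I foresee is the last descent step: one must check that the cohomological splitting is compatible with the $\Gamma_K$-equivariant structure in a sufficiently rigid way to apply Mochizuki's bijection and extract an actual birational morphism over $S'_K$, rather than merely a $\bar K$-birational isomorphism of the geometric generic fibres. In other words, the ambiguity allowed by "modulo inner automorphisms arising from $\Gamma_{F_{\bar K}}$" must not obstruct the simultaneous splitting of the extension; here the algebraic-space hypothesis on $\mathrm{Bir}(F_{\bar K})$ is crucial, for it is precisely what prevents the image of $\bar\rho$ from being an intractable infinite profinite group and allows the cohomology class to be killed by a single generically finite base change.
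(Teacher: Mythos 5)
Your proposal follows essentially the same route as the paper: use the finiteness lemma for continuous homomorphisms into the algebraic space in group $\mathrm{Bir}(F_{\bar K})$ to produce a generically finite $S'_K \to S_K$ killing the extension class, then apply Mochizuki's correspondence to convert the group-theoretic triviality back into a birational equivalence. The descent step you flag as the main obstacle --- passing from a splitting over $\Gamma_{S'_{\bar K}}$ to a product structure defined over $K$ --- is precisely what the paper handles with its left-exact sequence $1 \to \mathrm{Mon}_{\Gamma_K}(\Gamma_K, \mathrm{Eq}(G[1])) \to \mathrm{Mon}_{\Gamma_K}(\Gamma_{S'_K}, \mathrm{Eq}(G[1])) \to \mathrm{Mon}_{\Gamma_{\bar K}}(\Gamma_{S'_{\bar K}}, \mathrm{Eq}(G[1]))$: since the class of $X_K\times_{S_K}S'_K$ maps to the trivial element in the last term, it comes from $\mathrm{Mon}_{\Gamma_K}(\Gamma_K, \mathrm{Eq}(G[1]))$, i.e.\ from an extension of $\Gamma_K$ by $\Gamma_{F_{\bar K}}$ alone, and this is what supplies the $K$-form $F_K$ appearing in the conclusion.
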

\begin{proof}
 Let $K$ be a sub-p-adic field.
Assume that $\mathrm{Bir}(F_{\bar{K}})$ is an algebraic space in group locally of finite type and that there exist a dominant $S_K$-rational map $Y_K\times_K S_K  \to X_K$
Then there exists a generically finite morphism $S^{\prime}_K \to S_K$ such that the natural map $H^1(\Gamma_{S_{\bar{K}}}, (\Gamma_{F_{\bar{K}}} \to \mathrm{Aut}_{\Gamma_K}(\Gamma_{F_{\bar{K}}})) \to H^1(\Gamma_{S^{\prime}_{\bar{K}}}, (\Gamma_{F_{\bar{K}}} \to \mathrm{Aut}(\Gamma_{F_{\bar{K}}}))$ sends the extension class to the trivial extension class, i.e., a distinguished element.
Let $G=\Gamma_{F_{\bar{K}}}$. One has the following exact sequence:
\[
1 \to \mathrm{Mon}_{\Gamma_K}(\Gamma_K, \mathrm{Eq}(G[1])) \to \mathrm{Mon}_{\Gamma_K}(\Gamma_{S_K^{\prime}}, \mathrm{Eq}(G[1])) \to \mathrm{Mon}_{\Gamma_{\overline{K}}}(\Gamma_{S_{\overline{K}}^{\prime}}, \mathrm{Eq}(G[1]))
\]
The element of $X_K\times_{S_K}S^{\prime}_K$ in $\mathrm{Mon}_{\Gamma_K}(\Gamma_{S_K^{\prime}}, \mathrm{Eq}(G[1]))$ maps to 1 in $\mathrm{Mon}_{\Gamma_{\overline{K}}}(\Gamma_{S_{\overline{K}}^{\prime}}, \mathrm{Eq}(G[1]))$.
Hence choosing a variety $F_K$ one obtains $X_K\times_{S_K}S^{\prime}_K = F_K\times_KS^{\prime}_K$ birationally by pull-back and Mochizuki correspondence.

\end{proof}

%%%%%%%%%%%%%%%%%%%%%%%%%%%%%%%%%%%%%%%%%%%%%%%%%%%%%%%%%%%%%%%%%%%%%%%%%%%%%%%%%%%%%%%%%%%%%%%%%%%%%%%%%%%%%%%%%%%%%%%%%%%%%%%%%%%%%%%%%%%%%%%%%%%%%%%%%%%%%%%%%%%%%%%%%%%%%%%%%%%%%%%%%%%%%%%%%%%%%%%%%%%%%%%%%%%%%%%%%%%%%%%%%%%%%%%%%%%%%%%%
%%%%%%%%%%%%%%%%%%%%%%%%%%%%%%%%%%%%%%%%%%%%%%%%%%%%%%%%%%%%%%%%%%%%%%%%%%%%%%%%%%%%%%%%%%%%%%%%%%%%%%%%%%%%%%%%%%%%%%%%%%%%%%%%%%%%%%%%%%%%%%%%%%%%%%%%%%%%%%%%
\section{Birational automorphism groups in Algebraic Geometry}
	
\begin{theorem}	
Let $X$ be a non singular projective variety of Kodaira dimension $\geq 0$.	
$Bir(X)$ is a scheme which is locally of finite type.	
\end{theorem}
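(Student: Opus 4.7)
The plan is to realize $\mathrm{Bir}(X)$ as a locally closed sub-group-scheme of the Hilbert scheme of $X\times X$, parametrized by (closures of) graphs of birational self-maps, and to use the Iitaka fibration (available because $\kappa(X)\geq 0$) to reduce the local-finite-type question to two classical pieces.

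First I would construct the ambient scheme. Consider the open subscheme $\mathbf{U}\subset \mathbf{Hilb}_{X\times X}$ parametrizing closed subschemes $\Gamma\subset X\times X$ such that both projections $\Gamma\to X$ are birational; this is an open condition by flatness and standard semicontinuity, and composition of correspondences endows $\mathbf{U}$ with a group-scheme structure. Each $f\in \mathrm{Bir}(X)(\mathbf{C})$ yields a closed point of $\mathbf{U}$ via the closure of its graph, producing an inclusion $\mathrm{Bir}(X)\hookrightarrow \mathbf{U}(\mathbf{C})$; the remaining task is to check that the resulting subfunctor is represented by a locally closed subscheme locally of finite type.

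Next I would invoke the pluricanonical map. Choose $m\gg 0$ so that $\phi_m: X\dashrightarrow Z\subset \mathbf{P}^N$ realizes the Iitaka fibration, with $\dim Z=\kappa(X)$ and $Z$ of general type. Each $f\in \mathrm{Bir}(X)$ acts on $H^0(X,\omega_X^{\otimes m})$ by pullback and so descends to some $\bar f\in \mathrm{Bir}(Z)$. Matsumura's theorem gives that $\mathrm{Bir}(Z)$ is a finite group, so the induced homomorphism $\mathrm{Bir}(X)\to \mathrm{Bir}(Z)$ has finite image, and it suffices to prove that the kernel $\mathrm{Bir}(X/Z)$ of fibrewise birational self-maps is representable by a scheme locally of finite type; the whole of $\mathrm{Bir}(X)$ is then a finite disjoint union of translates of this kernel inside $\mathbf{U}$. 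To analyze $\mathrm{Bir}(X/Z)$ I would pass to the relative Hilbert scheme $\mathbf{Hilb}_{X\times_Z X/Z}$, inside which the graphs of fibrewise birational maps form a locally closed subscheme; the connected component of the identity is then captured, via a general fibre $F$ with $\kappa(F)=0$, by the Albanese torsor of $F$.

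The main obstacle will be controlling these graphs uniformly. Over the generic fibre of $\phi_m$ the birational automorphism group may be infinite (consider $F$ a K3 surface or an abelian variety), so no single Hilbert polynomial can bound everything; one can only hope to decompose $\mathrm{Bir}(X/Z)$ into countably many bounded families. Producing a polarization on $X\times X$ for which every graph of an element of $\mathrm{Bir}(X/Z)$ has bounded Hilbert polynomial on its component will require either Hanamura's original argument, exploiting that fibrewise birational self-maps preserve $K_{X/Z}$ modulo numerical equivalence and hence act by a discrete subgroup of the automorphism group of a relative Picard scheme, or the Douady/Fujiki-class formalism to glue the countably many bounded families into a single scheme locally of finite type. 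The weak positivity of $\omega_{X/Z}$ established earlier in the paper, combined with bigness of $\omega_Z$, is what makes such a polarization available and ties the argument back to the rest of the paper.
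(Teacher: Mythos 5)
Your reduction step contains a genuine error: the base $Z$ of the Iitaka fibration of $X$ is \emph{not} of general type in general (take $X$ an elliptic surface over $\mathbf{P}^1$ with $\kappa(X)=1$; then $Z=\mathbf{P}^1$), so Matsumura's finiteness theorem for $\mathrm{Bir}(Z)$ does not apply to $Z$ and cannot be the reason the homomorphism $\mathrm{Bir}(X)\to\mathrm{Bir}(Z)$ has finite image. What is true, and what you would actually need here, is the finiteness of the pluricanonical representation $\mathrm{Bir}(X)\to \mathrm{GL}(H^0(X,\omega_X^{\otimes m}))$ (Deligne--Ueno, Nakamura--Ueno), which is a different and nontrivial input that you do not invoke. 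Separately, the claim that ``both projections birational'' cuts out an \emph{open} subscheme of $\mathbf{Hilb}_{X\times X}$ carrying a group-scheme structure under composition of correspondences is asserted, not proved; composition of closures of graphs does not interact well with flat families, and making this precise is a substantial part of any Hilbert-scheme construction of $\mathrm{Bir}(X)$.

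More seriously, the step you yourself identify as ``the main obstacle'' --- bounding the Hilbert polynomials of the graphs so that $\mathrm{Bir}(X/Z)$ decomposes into countably many bounded families --- is exactly the content of the theorem, and you resolve it by appealing to ``Hanamura's original argument,'' i.e.\ by citing the result you are asked to prove. As written, the proposal is therefore a plausible outline with its central step missing. Note also that the paper takes a different route entirely: it does not use the Iitaka fibration or graphs in $\mathbf{Hilb}_{X\times X}$, but instead regularizes the birational action via the Weil--Rosenlicht theorem to build an increasing union of connected automorphism groups $\mathrm{Aut}^0(X_i)$ of successively smaller quasi-projective models, and then invokes Matsumura's structure theorem \emph{on $X$ itself} (for $\kappa(X)\geq 0$ the maximal connected algebraic group acting birationally is an abelian variety) to conclude that the limit is algebraic. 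If you want to salvage your approach, replace the general-type claim about $Z$ by the finiteness of pluricanonical representations, and supply the boundedness argument (e.g.\ via preservation of $K_{X/Z}$ up to numerical equivalence) rather than deferring it.
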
	
We shall prove the theorem using the following Lemmas.	
\begin{lemma}	
Let $X$ be a quasi-projective variety. Then $\rm{Aut}(X)$ is a group scheme which is locally of finite type.
\end{lemma}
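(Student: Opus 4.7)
The plan is to realise $\mathrm{Aut}(X)$ as an open subscheme of a Hilbert scheme, via graphs of automorphisms. First I would invoke Nagata's compactification to embed $X$ as a dense open subset of a projective variety $\bar X$, with boundary $Y=\bar X\setminus X$. Any $T$-automorphism $u$ of $X\times T$ over $T$ determines its graph $\Gamma_u\subset X\times X\times T$, whose scheme-theoretic closure in $\bar X\times \bar X\times T$ is a closed subscheme flat over $T$ that recovers $u$ uniquely. So functorially a $T$-point of $\mathrm{Aut}(X)$ is encoded by such a closure.

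Accordingly I would work inside the Hilbert scheme $\mathbf{Hilb}_{\bar X\times \bar X}$, which is a disjoint union of projective schemes and hence locally of finite type. Inside it I would single out the subfunctor $H$ whose $T$-points are flat families $\Gamma\subset \bar X\times \bar X\times T$ satisfying: (i) each fibre of $\Gamma$ over $T$ is disjoint from $Y\times \bar X\cup \bar X\times Y$; and (ii) both projections $\Gamma\to \bar X\times T$ restrict on $\Gamma\cap (X\times X\times T)$ to isomorphisms onto $X\times T$. Condition (i) is open because the map $\Gamma\cap((Y\times \bar X\cup \bar X\times Y)\times T)\to T$ is proper and its image is closed; condition (ii) is the standard openness of the isomorphism locus in proper flat families, applied to the two projections from the closure $\bar\Gamma$, which are proper since $\bar X$ is. Hence $H$ is represented by an open subscheme of $\mathbf{Hilb}_{\bar X\times \bar X}$, itself locally of finite type over the base field.

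The group structure on $H$ is induced by composition of correspondences, namely $(\Gamma_1,\Gamma_2)\mapsto p_{13,\ast}\bigl(p_{12}^{-1}\Gamma_1\cap p_{23}^{-1}\Gamma_2\bigr)$ on $\bar X\times \bar X\times \bar X\times T$, and inversion by the flip of the two $\bar X$-factors. Both operations descend to morphisms of schemes because they are compatible with base change through the Hilbert scheme. Combined with the representability and openness already established, this yields that $\mathrm{Aut}(X)=H$ is a group scheme locally of finite type.

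The main obstacle I expect is the verification of condition (ii) in the non-proper setting, since the usual openness of the isomorphism locus is stated for proper morphisms while $X$ itself is only quasi-projective. The key observation is that condition (i) forces the closure $\bar\Gamma$ to meet neither boundary divisor, so $\Gamma\cap (X\times X\times T)$ equals $\bar\Gamma$ over $X\times T$ and therefore maps properly onto $X\times T$ via either projection; openness of the isomorphism locus for proper flat families then transfers back to the Hilbert parameter. Once this openness check is settled, the remaining verifications are formal properties of Hilbert schemes and graphs of morphisms.
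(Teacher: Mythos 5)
Your overall strategy --- compactify $X$ to $\bar X$ and realise $\mathrm{Aut}(X)$ inside $\mathbf{Hilb}_{\bar X\times\bar X}$ via graphs --- is exactly the one-line sketch the paper itself gives, so you are on the paper's intended route. But your execution has a fatal flaw in condition (i). If $X$ is not proper, the graph $\Gamma_u\subset X\times X$ of an automorphism $u$ is isomorphic to $X$, hence not proper, hence not closed in $\bar X\times\bar X$; its closure $\bar\Gamma_u$ therefore necessarily meets $(Y\times\bar X)\cup(\bar X\times Y)$. Since your condition (i) requires the fibres to be disjoint from that boundary, the subfunctor $H$ contains the graph closure of no automorphism at all unless $X=\bar X$, so the identification $\mathrm{Aut}(X)=H$ collapses precisely in the non-proper case the lemma is about; your concluding ``key observation'' (that (i) forces $\bar\Gamma$ to avoid the boundary, making the projections proper over $X\times T$) cannot coexist with $H$ being nonempty. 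A secondary gap: the scheme-theoretic closure in $\bar X\times\bar X\times T$ of a family of graphs that is flat over $T$ need not itself be flat over $T$, so even the map from $T$-automorphisms to $T$-points of $\mathbf{Hilb}_{\bar X\times\bar X}$ is not well defined as you set it up.

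No local repair of (i) can succeed, because the statement is in fact false for general quasi-projective $X$. For $X=\mathbb{A}^2$, the automorphisms $(x,y)\mapsto(x,y+tp(x))$, $t\in\mathbb{A}^1$, connect the identity to $(x,y)\mapsto(x,y+p(x))$ for every polynomial $p$, so the identity component of any representing group scheme would contain families of unbounded dimension; but a connected group scheme locally of finite type over a field is of finite type, hence finite-dimensional. Thus $\mathrm{Aut}(\mathbb{A}^2)$ is only an ind-group-scheme. The graph/Hilbert-scheme argument genuinely proves representability only for \emph{proper} $X$ (Grothendieck, Matsumura--Oort); for quasi-projective $X$ one must either add a properness hypothesis or replace $\mathrm{Aut}(X)$ by the automorphisms of $\bar X$ preserving $X$. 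In fairness, the paper's own proof is a single sentence carrying the same unaddressed obstruction.
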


\begin{proof}
Since $X$ is quasi-projective, it suffices to consider some compactification $\bar{X}$ of $X$ and $\rm{Hilb}_{\bar{X}\times \bar{X}}$.
\end{proof}	

Let $\rm{Aut}^0(X)$ denote the connected component of $\rm{Aut}(X)$ which contains an identity of the group. 

We refer to the following A.Weil-M.Rosenlicht's theorem.
\begin{theorem}(\cite{Ro})
Let the algebraic group $G$ operate on the variety $V$ and let $k$ be a field of definition for $G$, $V$ and the operation of $G$ on $V$. Then there exists a variety $V^{\prime}$, birationally equivalent over $k$ to $V$, such that the operation of $G$ on $V^{\prime}$ that is induced by its operation on $V$ is regular.
\end{theorem}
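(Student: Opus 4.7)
The plan is to build $V'$ by patching finitely many birational models of $V$ along translates of the rational action, and then to verify that the resulting global action is regular chart by chart using the group law of $G$.

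First I would analyze the domain of the rational action $\sigma:G\times V\dashrightarrow V$. Its domain of definition $\mathcal D\subset G\times V$ is a dense open subset, so for each $v\in V$ the set $\{g\in G:(g,v)\in\mathcal D\}$ is dense open in $G$, and for each $g\in G$ the partial map $\sigma_g=\sigma(g,-)$ is a birational self-map of $V$ defined on a dense open $\mathcal D_g\subset V$. The assumption that $\sigma$ is a rational action gives the cocycle identity $\sigma_{hg}=\sigma_h\circ\sigma_g$ as an equality of rational maps; this identity, forced by the regular group law of $G$, is the engine of the entire construction.

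Second, since $G$ is an irreducible algebraic variety, finitely many left translates of the dense open $\mathcal D_e$ cover $G$. I would choose $g_1,\dots,g_r\in G$ witnessing such a cover, set $V_i$ to be a copy of $V$ indexed by $g_i$, and define transition maps $\tau_{ij}=\sigma_{g_i^{-1}g_j}:V_j\dashrightarrow V_i$. The cocycle relation $\tau_{ij}\circ\tau_{jk}=\tau_{ik}$ is immediate from the group law, so I can glue the $V_i$ along the $\tau_{ij}$ to form a scheme $V'$ with natural birational maps $V_i\hookrightarrow V'$, and hence a birational equivalence $V\dashrightarrow V'$ defined over $k$.

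Third, to produce the regular action on $V'$, I observe that for $g\in G$ lying in a Zariski neighbourhood of $g_jg_i^{-1}$ the assignment $v'\mapsto\sigma(g,v')$ sends $V_i$ to $V_j$ and is regular there; the different local formulas agree on overlaps by the cocycle, so they glue to a morphism $G\times V'\to V'$. That this is indeed a group action follows by passing to a common dense open where all identities hold rationally and then extending, using separatedness of $V'$.

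The main obstacle I expect is precisely the separatedness (and irreducibility) of the glued $V'$, since patching along birational transition maps does not automatically yield a variety. A cleaner variant I would fall back on is to embed $V'$ diagonally into a product $\prod_i V$ via $v\mapsto(\sigma_{g_i}(v))_i$ and take the closure of the image: separatedness is then automatic, at the cost of having to identify this closure with the gluing in order to see that the action is regular at every point. In either approach the substance of the argument is the cocycle identity supplied by the regular group law of $G$.
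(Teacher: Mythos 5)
The paper offers no proof of this statement at all: it is quoted from Rosenlicht \cite{Ro} and used as a black box, so there is nothing internal to compare against. Your sketch follows the classical Weil--Rosenlicht strategy (cover $G$ by translates of a dense open, glue copies of $V$ along the transition maps $\sigma_{g_i^{-1}g_j}$, and check regularity chart by chart via the cocycle identity), which is indeed the right architecture.

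There is, however, a genuine gap at the load-bearing step. One cannot glue schemes along \emph{birational} transition maps: a gluing datum requires isomorphisms between specified open subsets $U_{ij}\subseteq V_i$ and $U_{ji}\subseteq V_j$, with the cocycle condition holding as an identity of morphisms on triple overlaps, not merely as an identity of rational maps. Producing such opens is exactly where the work lies: one must first replace $V$ by a dense open $U$ on which the rational action is ``strict'', i.e.\ each $\sigma_g$ restricts to an open immersion wherever defined and the identity $\sigma_{hg}=\sigma_h\circ\sigma_g$ holds pointwise on the nose on the relevant loci. (Rosenlicht's own argument secures this by working with the induced automorphisms of the function field $k(V)$ and constructing $V'$ as a suitable model of $k(V)$.) The same issue recurs in your third step: to see that $G\times V'\to V'$ is everywhere defined you need the sets $\{(g,v'):\sigma(g,v')\ \text{defined and landing in}\ V_j\}$ to be open and to cover $G\times V'$, which is a statement about the geometry of the domain of $\sigma$ over $V$, not just about covering $G$ by translates. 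You correctly flag separatedness, and the closure-in-a-product fallback does cure it, but then the regularity of the action at the boundary points of that closure is precisely what must be proved rather than assumed. In short: the strategy is correct, but the existence of a strict dense open $U$ --- the technical heart of Weil's and Rosenlicht's proofs --- is asserted rather than established.
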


\begin{lemma}	Let $X$ be a projective variety.  There exists an inductive system of monomorphisms
$\rm{Aut}^0(X_i) \to \rm{Aut}^0(X_{i+1})$ such that $X_0 = X$, $\rm{Aut}^0(X_i)$ acts regularly on some quasi-projective variety $X_{i+1}$ which is a quasi-projective variety $X_i\setminus H$ where $H$ is a hypersurface of $X_i$.
The inductive limit of the system $(\rm{Aut}^0(X_i))_{i \in I}$ is locally compact Lie group and an ind-algebraic space. Hence it is a pro-Lie group.	
If $X$ is of Kodaira dimension $\geq 0$, the birational automorphism group is locally algebraic.	
\end{lemma}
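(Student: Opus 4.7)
The plan is to iterate the Weil--Rosenlicht theorem recalled above. Set $X_{0}=X$, so that $\mathrm{Aut}^{0}(X_{0})$ is a connected group scheme locally of finite type by the previous lemma. Inductively, suppose $X_{i}$ has been built. Choose an algebraic subgroup $G_{i}$ of $\mathrm{Bir}(X)$ that strictly contains $\mathrm{Aut}^{0}(X_{i})$ (if no such subgroup exists the recursion terminates). Then $G_{i}$ acts birationally on $X_{i}$, and Weil--Rosenlicht yields a variety $V'$ birational to $X_{i}$ on which $G_{i}$ acts regularly. After resolving the birational map $X_{i}\dashrightarrow V'$ and deleting its indeterminacy locus, one arranges $V'=X_{i}\setminus H_{i}$ for a suitable hypersurface $H_{i}\subset X_{i}$; set $X_{i+1}:=X_{i}\setminus H_{i}$. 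The restriction of the regular $\mathrm{Aut}^{0}(X_{i})$-action to $X_{i+1}$ remains regular, so it embeds into $\mathrm{Aut}^{0}(X_{i+1})$, producing the required monomorphism; injectivity is automatic because the two groups act faithfully on the common dense open subset of $X$.

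Form the filtered colimit
\[
\widehat{\mathrm{Aut}}{}^{0}(X)\;:=\;\varinjlim_{i}\mathrm{Aut}^{0}(X_{i}).
\]
Each term is a connected algebraic group, hence a finite-dimensional Lie group, and the transition maps identify open neighbourhoods of the identity. Consequently every element of $\widehat{\mathrm{Aut}}{}^{0}(X)$ has a Lie-group neighbourhood, so $\widehat{\mathrm{Aut}}{}^{0}(X)$ is a locally compact Lie group and an ind-algebraic space in groups. Passing to the cofinal system of open normal subgroups and forming the corresponding (finite-dimensional) Lie quotients realises it simultaneously as a projective limit of Lie groups, i.e.\ as a pro-Lie group.

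For the last assertion, assume $\kappa(X)\ge 0$. Every birational self-map of $X$ preserves the graded canonical ring $\bigoplus_{m}H^{0}(X,\omega_{X}^{\otimes m})$, so the Iitaka fibration $\Phi\colon X\dashrightarrow I(X)$ is $\mathrm{Bir}(X)$-equivariant. Any connected algebraic subgroup of $\mathrm{Bir}(X)$ must therefore act trivially on $I(X)$ and by birational automorphisms along the generic fibre, whose Kodaira dimension is $0$. Classical dimension estimates in the spirit of Matsusaka--Hanamura bound $\dim G_{i}$ uniformly and force the recursion to stabilise; each connected component of $\mathrm{Bir}(X)$ is then an algebraic group, so $\mathrm{Bir}(X)$ is locally algebraic.

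The principal obstacle is the geometric bookkeeping at each inductive step: Weil--Rosenlicht only guarantees a birational model $V'$ carrying the regular action, and arranging it as an open subset $X_{i}\setminus H_{i}$ demands careful control of the indeterminacies, possibly after a preliminary blow-up of $X_{i}$, in a way that preserves the monomorphism $\mathrm{Aut}^{0}(X_{i})\hookrightarrow\mathrm{Aut}^{0}(X_{i+1})$. A secondary, purely topological-group-theoretic difficulty is upgrading the evident ind-Lie structure on $\widehat{\mathrm{Aut}}{}^{0}(X)$ to a pro-Lie presentation, which requires exhibiting sufficiently many open normal subgroups whose quotients are already captured at a finite stage of the system.
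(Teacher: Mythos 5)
Your overall architecture matches the paper's: iterate Weil--Rosenlicht to build the system $(\mathrm{Aut}^0(X_i))_{i\in I}$, pass to the colimit, and then specialize to $\kappa(X)\ge 0$. But the two steps that carry the real weight are not established. First, your justification that the colimit is a locally compact Lie group rests on the claim that ``the transition maps identify open neighbourhoods of the identity.'' That is false whenever $\mathrm{Aut}^0(X_i)\hookrightarrow\mathrm{Aut}^0(X_{i+1})$ is a \emph{strict} inclusion of connected algebraic groups: the dimension jumps, the smaller group is closed and nowhere dense in the larger one, and no identity neighbourhood is preserved; a strictly increasing chain would produce a colimit that is not locally compact at all. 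The paper instead argues via Baire category (the inductive limit, viewed in the complex topology, has every interior point already interior in some $\mathrm{Aut}^0(X_i)$, which forces the chain to stabilize in an essential way). Some stabilization input of this kind is indispensable, and your proposal does not supply one.

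Second, for the final assertion the paper does not re-derive anything: it invokes Matsumura's theorem (\cite{Mat}) that for $\kappa(X)\ge 0$ there exists a \emph{maximal} algebraic group acting birationally on $X$ (and that it is abelian). That maximality is exactly what caps the inductive system and makes $\mathrm{Bir}(X)$ locally algebraic. You replace it with equivariance of the Iitaka fibration plus unspecified ``dimension estimates in the spirit of Matsusaka--Hanamura''; no such estimate is stated or proved, and since the generic Iitaka fibre has Kodaira dimension $0$ rather than being of general type, triviality or boundedness of the induced action along the fibres is not automatic. This is the crux of the lemma and it is missing. (You also candidly leave open the realization of the Weil--Rosenlicht model as $X_i\setminus H_i$; the paper is equally terse there, so that gap is shared, but the two gaps above are yours.)
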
	

\begin{proof} By Weil-Rosenlicht's theorem(\cite{Lo}) and the lemma above, we can construct an inductive system of monomorphisms
$\rm{Aut}^0(X_i) \to \rm{Aut}^0(X_{i+1})$ such that $X_0 = X$, $\rm{Aut}^0(X_i)$ acts regularly on some quasi-projective variety $X_{i+1}$ which is a quasi-projective variety $X_i\setminus H$ where $H$ is a hypersurface of $X_i$. Since any $\rm{Aut}^0(X_i)$ is an algebraic group, the inductive limit is a Baire space in the complex topology, i.e., an inner point in the limit space is also an inner point some $\rm{Aut}^0(X_i)$. Thus the inductive limit of the system $(\rm{Aut}^0(X_i))_{i \in I}$ is locally compact Lie group and an ind-algebraic space, which is also a pro-Lie group.
When $\kappa(X) \geq 0$, there exists a maximal algebraic group birationally acting on $X$ by \cite{Mat}. Hence the inductive limit is an algebaic group which turns out to be an abelian group by \cite{Mat}.
\end{proof}

Thus the theorem above is proved.

Hence the theorem in the preceded section is equivalent to the following theorem.

\begin{theorem}	
Let $X/S$ be a fibre space with the generic geometric fibe of Kodaira dimension $\geq 0$.	
If $X/S$ is neutral, then $X/S$ is isotrivial.	
\end{theorem}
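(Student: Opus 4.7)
The plan is to feed the neutrality hypothesis into the Mochizuki-style machinery of the preceding section, using the structure theorem for $\mathrm{Bir}(X)$ just established to access the finiteness lemma for representations into algebraic groups locally of finite type.

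First I would spread the fibre space $X/S$ out over a sub-p-adic field $K$, so that Mochizuki's theorem applies: choose $X_K/S_K$ with $X_K \times_K \mathbf{C} \cong X$, $S_K \times_K \mathbf{C} \cong S$, and a model $F_K$ of the geometric generic fibre $X_{\bar\eta} \cong F_K \otimes_{K_0} \mathbf{C}$ (as in the excerpt). The neutrality hypothesis says the short exact sequence $1 \to \Gamma_{X_{\bar\eta}} \to \Gamma_X \to \Gamma_S \to 1$ admits a group-homomorphism section $\sigma : \Gamma_{S_K} \to \Gamma_{X_K}$. Restricting $\sigma$ to $\Gamma_{\overline{S}} = \ker(\Gamma_{S_K} \to \Gamma_K)$ produces a sectional homomorphism $\Gamma_{\overline{S}} \to \Gamma_{\overline{X}} \subset \Gamma_{X_K}$, which by conjugation gives a continuous homomorphism $\rho : \Gamma_{\overline{S}} \to \mathrm{Aut}_{\Gamma_K}(\Gamma_{X_K})$. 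This is exactly the setup of the proposition on Mochizuki bijections, which pushes $\rho$ forward to a homomorphism $\Gamma_{\overline{S}} \to \mathrm{Bir}_{\overline{K}}(X_{\bar\eta})$ and, by the outer-action square, to $\Gamma_{\overline{S}} \to \mathrm{Out}(\Gamma_{X_{\overline{K}}})$.

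Next I would invoke the theorem of the final section: since $\kappa(F_{\overline{K}}) \geq 0$, the group $\mathrm{Bir}(F_{\overline{K}})$ is an algebraic space in group locally of finite type over $K$. Therefore the composite $\Gamma_{\overline{S}} \to \mathrm{Bir}_{\overline{K}}(X_{\bar\eta})$ is a continuous homomorphism into an algebraic group locally of finite type. By the finiteness lemma in the Mochizuki section, its image is finite and there exists a generically finite morphism $S_K' \to S_K$ such that the pulled-back homomorphism $\Gamma_{\overline{S'}} \to \mathrm{Bir}_{\overline{K}}(X_{\bar\eta})$ is trivial. By the last bullet of the proposition on the Mochizuki bijections, triviality of this birational-geometric representation forces the outer action $\Gamma_{\overline{S'}} \to \mathrm{Out}(\Gamma_{F_{\overline{K}}})$ to be trivial as well, so the pullback extension class in $H^1(\Gamma_{\overline{S'}}, (\Gamma_{F_{\overline{K}}} \to \mathrm{Aut}(\Gamma_{F_{\overline{K}}})))$ is the distinguished element.

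Finally I would apply the proposition on birational isotriviality (just before the last section): after this generically finite base change $S_K' \to S_K$, the exact sequence becomes split with trivial outer action, so the element of $\mathrm{Mon}_{\Gamma_K}(\Gamma_{S_K'}, \mathrm{Eq}(G[1]))$ representing $X_K \times_{S_K} S_K'$ is in the kernel of the restriction to $\mathrm{Mon}_{\Gamma_{\overline{K}}}(\Gamma_{\overline{S'}}, \mathrm{Eq}(G[1]))$, hence lies in the image of $\mathrm{Mon}_{\Gamma_K}(\Gamma_K, \mathrm{Eq}(G[1]))$. By Mochizuki's correspondence this image is represented by a trivial family $F_K \times_K S_K'$, giving a birational equivalence $X_K \times_{S_K} S_K' \cong F_K \times_K S_K'$ over $S_K'$. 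That is precisely isotriviality of $X/S$.

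The main obstacle is Step two: one must be careful that the canonical homomorphism $\Gamma_{\overline{S}} \to \mathrm{Bir}_{\overline{K}}(X_{\bar\eta})$ manufactured from the section $\sigma$ genuinely lands in the algebraic-space-in-group side on which the finiteness lemma bites, and that the cohomological dictionary $H^1(\Gamma_{\overline{S'}}, (G \to \mathrm{Aut}(G))) \cong H^1(\Gamma_{\overline{S'}}, \mathrm{Out}(G))$ identifies the neutrality of the pulled-back extension with the vanishing of the outer class; both of these rest on the equivalence $\mathrm{BiTors}(G) \cong (G \to \mathrm{Aut}(G))$ of the theorem on equivalent expressions of extensions and on $\Gamma_L$ being centre-free (Mochizuki's proposition) so that inner twists do not obstruct the identification.
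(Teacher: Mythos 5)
Your proposal follows essentially the same route as the paper: the paper's own proof is a two-line reduction that invokes the theorem that $\mathrm{Bir}$ of the generic geometric fibre is locally of finite type (since its Kodaira dimension is $\geq 0$) and then appeals to the isotriviality proposition of the Mochizuki section, whose proof is exactly the chain you spell out (section $\Rightarrow$ homomorphism into $\mathrm{Bir}_{\bar K}(X_{\bar\eta})$ $\Rightarrow$ finiteness lemma $\Rightarrow$ trivialization after a generically finite base change $\Rightarrow$ Mochizuki correspondence). Your write-up is a faithful, more detailed unpacking of the same argument, so there is nothing genuinely different to compare.
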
	
\begin{proof}
The birational automorphism group of the generic geometric fibre of $X/S$ is locally of finite type. Hence the extension $1 \to G \to E \to P \to 1$ associated to a fibre space $X/S$ satisfies the assumption of the theorem in the preceded section.
\end{proof}

%%%%%%%%%%%%%%%%%%%%%%%%%%%%%%%%%%%%%%%%%%%%%%%%%%%%%%%%%%%%%%%%%%%%%%%%%%%%%%%%%%%%%%%%%%%%%%%%%%%%%%%%%%%%%%%%%%%%%%%%%%%%%%%%%%%%%%%%%%%%%%%%%%%%%%%%%%%%%%%%
\section{Iitaka-Viehweg conjecture}

\begin{theorem}
Let $f:  \to S$ be a fibre space $X/S$ with the generic geometric fibre $X_{\bar{\eta}}$ of Kodaira dimension $\geq 0$. 
\[
\kappa(\det\,f_{\ast}\omega_{X/S}^{\otimes m}) \geq \rm{var}(X/S)
\] 
\end{theorem}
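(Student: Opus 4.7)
The plan is to assemble the pieces already set up throughout the paper. First, by Viehweg's Lemma combined with a Kummer--Kawamata base change with respect to an ample divisor on $S$, I would reduce to the case $\mathrm{var}(X/S) = \dim S$: passing to such a covering does not raise $\kappa(\det f_*\omega_{X/S}^{\otimes m})$, while it aligns the Viehweg dimension of the pulled-back fibre space (after desingularization) with $\dim S$ of the new base.

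Under this reduction I would invoke the horizontal-hyperplane cyclic cover constructed in Section 4: set $L = \mathcal{O}_X(\phi^*q^*(bH))$ with $b \gg 0$, choose a smooth irreducible $D \in |L^{\otimes n}|$, and form the degree-$n$ cover $\tau : Y \to X$. For $b$ sufficiently large, the generic geometric fibre of $g := f \circ \tau : Y \to S$ is of general type (or has abundant canonical sheaf), so the Koll\'ar--Kawamata theorem recalled in the paper yields $\kappa(\det g_*\omega_{Y/S}^{\otimes m}) \geq \mathrm{var}(Y/S)$.

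Next, I would use the Mochizuki Galois-theoretic apparatus of Sections 5--6 to establish $\mathrm{var}(Y/S) \geq \mathrm{var}(X/S)$. Since $\tau$ is finite and dominant, the extension $1 \to \Gamma_{X_{\bar\eta}} \to \Gamma_{X_K} \to \Gamma_{S_K} \to 1$ is obtained from the analogous extension for $Y/S$ by functorial manipulations at the band-of-profinite-groups level, so that birational isotriviality after a generically finite base change $S' \to S$ propagates from $Y/S$ to $X/S$. The proposition on birational isotriviality in the Mochizuki section, together with the structure result of Section 6 stating that $\mathrm{Bir}(X_{\bar\eta})$ is a locally algebraic group (here applicable because $\kappa(X_{\bar\eta}) \geq 0$), then allows us to read off the inequality $\mathrm{var}(Y/S) \geq \mathrm{var}(X/S) = \dim S$ by contraposition.

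With $\mathrm{var}(Y/S) \geq \mathrm{var}(X/S) = \dim S$ in hand, the proposition already proved finishes the argument: the inclusion $\mathcal{O}_S(\lceil mA\rceil) \hookrightarrow f_*\omega_{X/S}^{\otimes m}$, combined with weak positivity of $f_*\omega_{X/S}^{\otimes m}$ and with bigness of $\mathcal{O}_S(\lceil mA\rceil)$ for infinitely many $m$, yields $\max_{m>0}\kappa(\det f_*\omega_{X/S}^{\otimes m}) \geq \dim S = \mathrm{var}(X/S)$, which is the desired inequality. The main obstacle is the Mochizuki step: verifying rigorously that neutrality of the Galois-group extension for $Y/S$ — after a further generically finite base change on $S$ if necessary — descends through the cyclic cover $\tau$ to neutrality for $X/S$, at the level of classes in the bands of profinite groups, and that the hypothesis $\kappa(X_{\bar\eta}) \geq 0$ really suffices to place $\mathrm{Bir}(X_{\bar\eta})$ in the hypothesis class of the isotriviality proposition. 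Everything else is bookkeeping with the constructions already in place.
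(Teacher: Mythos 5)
Your proposal follows essentially the same route as the paper: reduce via Viehweg's Kummer--Kawamata base-change lemma to $\mathrm{var}(X/S)=\dim S$, build the cyclic cover $\tau:Y\to X$ branched along a horizontal hyperplane so that $Y_{\bar\eta}$ is of general type, apply the Koll\'ar--Kawamata theorem to get $\kappa(\det g_{*}\omega_{Y/S}^{\otimes m})\geq \mathrm{var}(Y/S)$, and supply $\mathrm{var}(Y/S)\geq\mathrm{var}(X/S)$ by the Mochizuki isotriviality proposition together with the local algebraicity of $\mathrm{Bir}(X_{\bar\eta})$ for $\kappa(X_{\bar\eta})\geq 0$ --- and you correctly single out the Mochizuki descent step as the load-bearing (and most delicate) claim, exactly as in the paper. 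The only cosmetic difference is that the paper's final assembly in Section 7 concludes directly from the asserted equality $\kappa(\det f_{*}\omega_{X/S}^{\otimes m})=\kappa(\det g_{*}\omega_{Y/S}^{\otimes m})$, whereas you route the last step through the weak-positivity proposition of Section 4 via the inclusion $\mathcal{O}_S(\lceil mA\rceil)\hookrightarrow f_{*}\omega_{X/S}^{\otimes m}$; both arguments appear in the paper, so this is not a genuinely different approach.
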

The proof shall be obtained by combining the next two lemmas.

\begin{lemma}
Let $f: X \to S$ be a fibre space $X/S$ with the generic geometric fibre $X_{\bar{\eta}}$ of Kodaira dimension $\geq 0$. 
There exists a fibre space $g: Y \to S$ such that 
\begin{enumerate}
\item $h: Y \to X$ is a cover over $X$ such that $ g = f\circ h$,
\item the generic geometric fibre $Y_{\bar{\eta}}$ of $Y/S$ is of general type, if necessary, the canonical invertible sheaf of $Y_{\bar{\eta}}$ is taken to be abundant,
\item $\kappa(\det\,f_{\ast}\omega_{X/S}^{\otimes m})=\kappa(\det\,g_{\ast}\omega_{Y/S}^{\otimes m})$.
\end{enumerate}
\end{lemma}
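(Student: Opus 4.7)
The plan is to apply the cyclic cover construction developed in Sections 3 and 4 and verify it delivers the three required properties. First, by Viehweg's Lemma I may replace the base $S$ by a Kummer--Kawamata covering $S'$ without increasing $\kappa(\det f_{*}\omega_{X/S}^{\otimes m})$, and so reduce to the case $\mathrm{var}(X/S) = \dim S$. With this reduction in force I resolve the indeterminacy of the rational map $X \dashrightarrow S \times \mathbf{P}^{r}$ coming from a transcendence basis of $R(X)/R(S)$, and form the Stein factorisation $X \to X'' \to Z = S \times \mathbf{P}^{r}$. This gives the diagram $\phi = \nu\circ\mu$ together with the finite cover $\nu: X'' \to Z$ used in the earlier sections.

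Next I would take $L = \mathcal{O}_{X}(\phi^{*}q^{*}(bH))$ for a large integer $b$, pick a smooth irreducible divisor $D$ in $|L^{\otimes n}|$, and define $Y = \mathbf{Spec}\bigoplus_{i=0}^{n-1} L^{-i}$ as the associated degree-$n$ cyclic cover $\tau: Y \to X$. Setting $g = f \circ \tau$ and $h = \tau$ immediately yields property (1). For property (2), the adjunction formula $\omega_{Y/S} = \tau^{*}(\omega_{X/S} \otimes L^{\otimes (n-1)})$ shows that, on the generic geometric fibre, the canonical bundle is twisted by a large multiple of an ample pullback from $\mathbf{P}^{r}$; choosing $b$ sufficiently large therefore makes $\omega_{Y_{\bar{\eta}}}$ big (hence $Y_{\bar{\eta}}$ of general type), and at worst it may be arranged to be abundant, which is precisely the hypothesis of the Kollár--Kawamata theorems for $Y/S$.

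For property (3) I would combine the two inequalities already assembled. On one hand, the proposition of Section~4 gives
\[
g_{*}\omega_{Y/S}^{\otimes m} \subset \bigoplus^{d}\bigoplus_{0\le i\le n-1}\mathcal{O}_{S}^{r_{i}} \otimes \mathcal{O}_{S}(\lceil mA \rceil),
\]
and the $\det$-comparison lemma yields $\kappa(\det g_{*}\omega_{Y/S}^{\otimes m}) \le \kappa(\mathcal{O}_{S}(\lceil mA \rceil))$. On the other hand, the weak positivity of $\omega_{X/S}$ together with the inclusion $\mathcal{O}_{S}(\lceil mA\rceil) \hookrightarrow f_{*}\omega_{X/S}^{\otimes m}$ forces $\kappa(\det f_{*}\omega_{X/S}^{\otimes m}) \ge \dim S$ for infinitely many $m$. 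Since Kollár--Kawamata gives $\kappa(\det g_{*}\omega_{Y/S}^{\otimes m}) \ge \mathrm{var}(Y/S) \ge \mathrm{var}(X/S) = \dim S$, both Iitaka dimensions are squeezed to $\dim S$ and so agree, delivering property (3).

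The main obstacle will be the equality in (3): the inclusion of sheaves goes one way, but the reverse inequality for Iitaka dimensions is not a formal consequence of the construction. My plan is to finesse this by showing both invariants equal the fixed number $\dim S$, using the weakly-positive argument to pin down $\kappa(\det f_{*}\omega_{X/S}^{\otimes m})$ from below and the explicit inclusion into $\mathcal{O}_{S}(\lceil mA\rceil)^{\oplus N}$ to pin down $\kappa(\det g_{*}\omega_{Y/S}^{\otimes m})$ from above. The Mochizuki-type argument that $\mathrm{var}(Y/S) \ge \mathrm{var}(X/S)$ is genuinely needed here, because it is what justifies applying Kollár--Kawamata to $Y/S$ at dimension $\dim S$ and not at a smaller value.
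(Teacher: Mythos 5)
Your proposal follows essentially the same route as the paper: the paper's own proof of this lemma is a two-line summary (embed $X$ in $S\times\mathbf{P}$, take a branched cover along a general horizontal hyperplane, and appeal to ``we have proved'') whose substance is exactly the Sections~3--4 machinery you reconstruct — the generically finite map $\phi: X \to S\times\mathbf{P}^r$, the cyclic cover along a general member of $|(\phi^*q^*bH)^{\otimes n}|$, the inclusion $g_*\omega_{Y/S}^{\otimes m}\subset \bigoplus \mathcal{O}_S^{r_i}\otimes\mathcal{O}_S(\lceil mA\rceil)$, Koll\'ar--Kawamata for $Y/S$, and the weak-positivity/determinant squeeze. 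The only cosmetic difference is that the paper's lemma phrases the branch divisor as a hyperplane section of an embedding $X\hookrightarrow S\times\mathbf{P}$ rather than the pullback $\phi^*q^*(bH)$, and your ordering makes explicit that the bigness of $A$ must be extracted from Koll\'ar--Kawamata on $Y$ (via the determinant comparison) before the inclusion into $f_*\omega_{X/S}^{\otimes m}$ can yield the lower bound for $X$ — which is consistent with, and arguably cleaner than, the paper's period-mapping detour.
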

\begin{proof}
Embed $X$ into some projective space $\mathbf{P}$ and $X/S$ into the trivial fibre space $S\times \mathbf{P}$.
Let $i: X \to S\times\mathbf{P}$ be the embedding over $S$.
Choose a general hyperplane $H$ in $S\times\mathbf{P}$ such that the intersection $X \cap H$ is a non singular variety and $H=H_0\times S$ is horizontal in $S\times\mathbf{P}$.
Take a branch cover $Y$ of $X$ along $H$.
Choose a hyperplane $H$ such that $Y/S$ has a general fibre of general type.
% Since $t_{\ast}\omega_{Y/S}=\omega_{X/S}(H)$, we have
%$(i\circ\,t)_{\ast}\omega_{Y/S}^{\otimes\,m}=i_{\ast}\omega_{X/S}^{\otimes}\oti%mes \cO(H_0)$.
%Thus we obtain $\,f_{\ast}\omega_{X/S}^{\otimes m} = \,g_{\ast}\omega_{Y/S}^{\o%times m}$ and
We have proved
$\kappa(\det\,f_{\ast}\omega_{X/S}^{\otimes m})=\kappa(\det\,g_{\ast}\omega_{Y/S}^{\otimes m})$.
\end{proof}

By Kollar's theorem, if necessary, Kawamata's theorem(\cite{Kaw}), 
\begin{theorem}
 $\kappa(\det\,f_{\ast}\omega_{X/S}^{\otimes m}) =\kappa(\det\,g_{\ast}\omega_{Y/S}^{\otimes m}) \geq var(Y/S)$.
\end{theorem}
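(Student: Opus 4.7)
The plan is to assemble this theorem from two ingredients already prepared in the paper, so the proof should be very short. The equality $\kappa(\det f_{\ast}\omega_{X/S}^{\otimes m}) = \kappa(\det g_{\ast}\omega_{Y/S}^{\otimes m})$ is exactly clause (3) of the preceding lemma, whose construction takes $Y$ to be a cyclic cover of $X$ ramified along a horizontal hyperplane pulled back from $S \times \mathbf{P}^r$; this same construction is engineered so that $Y_{\bar{\eta}}$ is of general type, or (alternatively, for the other branch of the hypothesis) has abundant canonical invertible sheaf. So the first step is simply to invoke that lemma.

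The second step is to feed the fibre space $g: Y \to S$ into the known cases of the Viehweg conjecture. If $Y_{\bar{\eta}}$ is of general type, Koll\'ar's theorem applies and yields $\kappa(\det g_{\ast}\omega_{Y/S}^{\otimes m}) \geq \mathrm{var}(Y/S)$ for some $m > 0$. If instead we are in the abundance branch, Kawamata's theorem gives the same inequality. Chaining with the equality from the previous lemma produces the stated conclusion.

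The only substantive content is therefore already spent in the preceding sections, where the cyclic-cover construction of $Y$ was combined with Viehweg's weak positivity and the decomposition $\omega_{X/S} \cong \mathcal{O}_X(E + \phi^{\ast} p^{\ast} A + a\, \phi^{\ast} q^{\ast} H)$ to establish the equality of Iitaka dimensions; the ramification along a sufficiently positive horizontal hypersurface is what forces the generic geometric fibre of $Y/S$ to be of general type. The main obstacle, if one is to be identified at this late stage, is ensuring that the integer $m$ supplied by Koll\'ar/Kawamata for $Y/S$ is compatible with the $m$ used in establishing $\kappa(\det f_{\ast}\omega_{X/S}^{\otimes m}) = \kappa(\det g_{\ast}\omega_{Y/S}^{\otimes m})$; this is handled by taking $m$ sufficiently divisible so that both the ring-theoretic matching of sections used in the preceding lemma and the hypothesis of Koll\'ar/Kawamata become simultaneously valid.
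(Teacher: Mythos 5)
Your proposal matches the paper's own treatment: the equality is clause (3) of the preceding lemma (the cyclic/branched cover construction of $Y/S$ along a horizontal hyperplane), and the inequality is the Koll\'ar theorem for the general type fibre, or Kawamata's theorem in the abundant case, applied to $g: Y \to S$ --- which is precisely the one-line justification the paper gives ("By Kollar's theorem, if necessary, Kawamata's theorem"). Your additional remark about choosing $m$ sufficiently divisible so that both ingredients hold simultaneously is a reasonable point of care that the paper passes over silently.
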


\begin{lemma} Let $Y/S$ and $X/S$ be fibre spaces over $K$ and $X/S$ with the generic geometric fibre of Kodaira dimension $\geq 0$. Assume that there exists a dominant $S$-rational map $Y \to X$. Then
$\rm{var}(Y/S) \geq \rm{var}(X/S)$.
\end{lemma}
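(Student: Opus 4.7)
\emph{Proof proposal.} Let $v = \mathrm{var}(Y/S)$. By definition, there exist a generically finite morphism $\rho : S' \to S$, a variety $T$ with $\dim T = v$, a morphism $\sigma : S' \to T$, and a fibre space $Y_0/T$ such that $Y \times_S S' \sim_{\mathrm{bir}} S' \times_T Y_0$ over $S'$. The given dominant $S$-rational map $h : Y \dashrightarrow X$ pulls back to a dominant $S'$-rational map
$$\tilde h : S' \times_T Y_0 \;\sim\; Y \times_S S' \;\dashrightarrow\; X \times_S S',$$
which is automatically a $T$-rational map since both projections to $T$ factor through $\sigma$. To conclude that $\mathrm{var}(X/S) \leq v$, it suffices to exhibit a fibre space $X_0/T$ with $X \times_S S' \sim_{\mathrm{bir}} S' \times_T X_0$.

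The plan is to establish this birational equivalence over the geometric generic fibre of $T$ and then spread out. Writing $\bar{\eta}$ for the geometric generic point of $T$ and $S'_{\bar{\eta}} := S' \times_T \bar{\eta}$, the restriction of $\tilde h$ becomes the $S'_{\bar{\eta}}$-dominant map
$$S'_{\bar{\eta}} \times Y_{0,\bar{\eta}} \;\sim\; Y \times_S S'_{\bar{\eta}} \;\dashrightarrow\; X \times_S S'_{\bar{\eta}},$$
so the source is trivial over $S'_{\bar{\eta}}$ and it remains to show that the target is birationally trivial over $S'_{\bar{\eta}}$. In the sub-p-adic reduction of Section 5, the dominant map $h$ induces via Mochizuki's correspondence an open $\Gamma_K$-homomorphism $\phi : \Gamma_Y \to \Gamma_X$ lying over $\Gamma_S$. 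Since $\phi$ is over $\Gamma_S$, any $\Gamma_S$-sectional splitting $s_Y$ of $1 \to \Gamma_{Y_{\bar{\eta}}} \to \Gamma_Y \to \Gamma_S \to 1$ pushes forward under $\phi$ to a $\Gamma_S$-sectional splitting $\phi \circ s_Y$ of $1 \to \Gamma_{X_{\bar{\eta}}} \to \Gamma_X \to \Gamma_S \to 1$. Base-changing to $S'_{\bar{\eta}}$, the $Y$-extension is neutral by the birational isotriviality of $Y \times_S S'_{\bar{\eta}} \to S'_{\bar{\eta}}$, whence the $X$-extension is neutral as well.

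The theorem of Section 5 (a neutral fibre space is birationally isotrivial), which applies because the generic geometric fibre of $X/S$ has Kodaira dimension $\geq 0$ and hence $\mathrm{Bir}(X_{\bar{\eta}})$ is an algebraic space locally of finite type by Section 6, then produces a variety $X_{0,\bar{\eta}}$ with $X \times_S S'_{\bar{\eta}} \sim S'_{\bar{\eta}} \times X_{0,\bar{\eta}}$. Standard spreading-out arguments over $T$ yield a fibre space $X_0/T$ with $X \times_S S' \sim_{\mathrm{bir}} S' \times_T X_0$, and therefore $\mathrm{var}(X/S) \leq v = \mathrm{var}(Y/S)$. The main obstacle is the Galois-theoretic transport of the neutralizing section from the $Y$-extension to the $X$-extension: in Mochizuki's framework sections are defined only modulo inner automorphisms arising from the geometric fundamental group, so one must verify that this ambiguity on the $Y$-side does not obstruct the resulting section on the $X$-side. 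This is precisely where the loc.\ finite type property of $\mathrm{Bir}(X_{\bar{\eta}})$ and the earlier control of the outer action $\Gamma_{\bar{S}} \to \mathrm{Out}(\Gamma_{X_{\bar{\eta}}})$ are indispensable.
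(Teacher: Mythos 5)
Your argument is correct and follows essentially the same route as the paper: reduce via the definition of $\mathrm{var}(Y/S)$ to the geometric generic fibre of $T$, where the source becomes a product over the new base, then invoke the Mochizuki--Galois isotriviality machinery of Section 5 (together with the local finiteness of $\mathrm{Bir}(X_{\bar{\eta}})$ from Section 6) to trivialize the target, and finally conclude $\mathrm{var}(X/S)\leq v$. The paper packages the step ``source has $\mathrm{var}=0$ implies target has $\mathrm{var}=0$'' as a separate subsequent lemma and leaves the spreading-out over $T$ implicit, whereas you inline both, but the substance is identical.
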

\begin{proof} Let $\mathrm{var}(Y/S)=v$. By definition of Viehweg dimension, there exist varieties $S^{\prime}$, $T$ and $Y_0$ such that
$Y\times_S S^{\prime}$ is birationally equivalent to $Y_0\times_T S^{\prime}$ with $Y_0/T$ a fibre space and $T$ of dimension $v$.
Hence $Y_0\times_T S^{\prime} \to X\times_S S^{\prime}$ is a dominant $S^{\prime}$-rational map.
Let $\zeta$ be the generic point of $T$ and $\overline{k(\zeta)}$ the algebraic closure of $k(\zeta)$.
The induced dominant $S^{\prime}\times_T \mathrm{Spec}(\overline{k(\zeta)})$-rational map
$Y_0\times_T (S^{\prime}\times_T\,\mathrm{Spec}(\overline{k(\zeta)})) \to X\times_S S^{\prime}\times_T\,\mathrm{Spec}(\overline{k(\zeta)})$.
%Let $\eta^{\prime}$ be the generic point of $S^{\prime}$. Then the algebraic closure of the rational function fields of$S$ and $S^{\prime}$ are the same, i.e., $\overline{k(\eta)} = \overline{k(\eta^{\prime})}$.
By the following lemma, $\mathrm{var}(X\times_S S^{\prime}\times_T\,\mathrm{Spec}(\overline{k(\zeta)}) = 0$.
Hence $\rm{var}(X/S) \leq v$. We therefore obtain $\rm{var}(Y/S) \geq \rm{var}(X/S)$.
\end{proof}

\begin{lemma}
Let $Y/S$ and $X/S$ be fibre spaces over $K$ and  the generic geometric fibre of $X/S$ with Kodaira dimension $\geq 0$. Assume that there exists a dominant $S$-rational map. Then
if $\rm{var}(Y/S) =0$, then $\rm{var}(X/S) = 0$.
\end{lemma}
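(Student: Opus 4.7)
The plan is to reduce the statement directly to the birational isotriviality proposition proved in the previous section. Since $\mathrm{var}(Y/S) = 0$, the definition of Viehweg dimension (with $T = \Spec K$) supplies a generically finite morphism $S' \to S$ and a $K$-variety $Y_0$ such that $Y \times_S S'$ is birational over $S'$ to $Y_0 \times_K S'$. Pulling back the given dominant $S$-rational map $Y \dashrightarrow X$ along $S' \to S$ and composing with this birational equivalence yields a dominant $S'$-rational map
\[
Y_0 \times_K S' \dashrightarrow X \times_S S',
\]
which places us in precisely the input situation of the earlier isotriviality proposition applied to the fibre space $X \times_S S' \to S'$.

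Next I would verify the algebraicity hypothesis on the birational automorphism group. Since the generic geometric fibre $F_{\bar K}$ of $X/S$ has Kodaira dimension $\geq 0$ by assumption, the theorem in the section on birational automorphism groups gives that $\mathrm{Bir}(F_{\bar K})$ is a scheme locally of finite type, and in particular an algebraic space in group locally of finite type. Thus the hypotheses of the preceding proposition are met.

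Applying that proposition to the fibre space $X \times_S S' \to S'$ and the dominant $S'$-rational map from $Y_0 \times_K S'$ produces a further generically finite cover $S'' \to S'$ and a $K$-variety $F_K$ such that
\[
(X \times_S S') \times_{S'} S'' \;\cong\; F_K \times_K S''
\]
birationally over $S''$. Thus $X \times_S S''$ is birationally a product, and as $S'' \to S$ is generically finite (a composite of two generically finite morphisms), the definition of Viehweg dimension applied with $T = \Spec K$ yields $\mathrm{var}(X/S) = 0$.

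The main obstacle is not in the combinatorics of this reduction, which is essentially a one-line composition of hypotheses, but in the descent step hidden inside the proposition being invoked: Mochizuki's theorems require the base to be sub-$p$-adic, so one must fix a finitely generated extension $K_0$ of $\mathbf{Q}$ and a model $F_{K_0}$ with $F_{K_0} \times_{K_0} \mathbf{C} \cong X_{\bar\eta}$, check that the dominant $S$-rational map $Y \dashrightarrow X$ descends together with $Y_0$ and $S'$ to compatible $K$-forms, and use the identification $\mathrm{Bir}_{\mathbf{C}}(X_{\bar\eta}) = \mathrm{Bir}_{\overline{\mathbf{Q}_p}}(F_{K_0} \otimes_{K_0} \overline{\mathbf{Q}_p})$ so that the sub-$p$-adic version of the proposition translates back to the statement over $\mathbf{C}$. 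Once this descent is granted, the rest of the argument is formal.
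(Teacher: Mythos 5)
Your proposal follows essentially the same route as the paper: base-change to $S'$ so that $Y$ becomes birationally a product, invoke the Kodaira dimension $\geq 0$ hypothesis to get that $\mathrm{Bir}(F_{\bar K})$ is an algebraic space in group locally of finite type, and then apply the preceding isotriviality proposition to conclude $\mathrm{var}(X/S)=0$. The only difference is that you spell out the composition of generically finite covers and flag the sub-$p$-adic descent step explicitly, both of which the paper leaves implicit.
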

\begin{proof}
To show that if $\rm{var}(Y/S) =0$, then $\rm{var}(X/S) = 0$, the statement is valid even if the fibre spaces $Y/S$ and $X/S$ are changed to a base $S^{\prime}$ on which in the definition Viehweg dimension $Y$ is birationally equivalent to $Y_0\times S^{\prime}$ for a variety $Y_0$.
Since Kodaira dimension of the generic geometric fibre of $X/S$ is non negative, the birational automorphism group is an algebraic space in group locally of finite type. Hence one obtains $\mathrm{var}(X/S)=0$ from the precedent proposition.
\end{proof}

\end{document}